\documentclass[11pt]{article}

\usepackage[letterpaper,margin=1in]{geometry}
\usepackage[T1]{fontenc}
\usepackage{lmodern}
\usepackage{microtype}
\usepackage{amsmath,amssymb,amsthm,mathtools,bm}
\usepackage{enumitem}
\usepackage[round,authoryear]{natbib}
\usepackage{mathrsfs}
\usepackage{booktabs}
\usepackage{xcolor}
\usepackage[
  colorlinks=true,
  citecolor=red,
  linkcolor=red,
  urlcolor=red
]{hyperref}
\hypersetup{
  pdftitle={An Unrestricted Cubic-Root Gaussian Approximation Bound for Hyperrectangles: Disproving the Polynomial-Dimensional n-1/4 Near-Optimality Conjecture},
  pdfauthor={ChatGPT 5.6 Pro (OpenAI)}
}

\allowdisplaybreaks
\setlist[enumerate]{leftmargin=2.2em}

\newtheorem{theorem}{Theorem}
\newtheorem{assumption}{Assumption}
\newtheorem{lemma}{Lemma}
\newtheorem{proposition}{Proposition}

\newtheorem{definition}{Definition}

\newtheorem{example}{Example}

\title{Cubic-Root Gaussian Approximation under Unrestricted Covariance}
\author{
Zijun Gao\thanks{Department of Data Sciences and Operations, Marshall School of Business, University of Southern California, Los Angeles, CA, USA, \href{zijungao@marshall.usc.edu}{zijungao@marshall.usc.edu}}
\and
Weihan Zhang\thanks{Division of Biostatistics, School of Public Health, University of California, Berkeley, Berkeley, CA, USA, \href{weihan_zhang2001@berkeley.edu}{weihan\_zhang2001@berkeley.edu}}
}

\begin{document}
\maketitle

\begin{abstract}
For Gaussian approximation over high-dimensional rectangles under unrestricted covariance, \citet{CCK2023} conjectured that the \(n^{-1/4}\) rate, up to logarithmic factors, is near-optimal. We show that, under the coordinatewise subexponential condition with scale \(B_n\) and the marginal variance lower bound condition with constant \(b\) in \citet{CCK2023}, 
the approximation error in dimension \(d\) is bounded by
\begin{align*}
C_b\min\left\{
1,\,
\left(\frac{B_n^2}{n}\right)^{1/3}\{\log(2dn)\}^{7/3}
+
\frac{B_n}{\sqrt n}\{\log(2dn)\}^{5/2}
\right\}.
\end{align*} 
In particular, for bounded \(B_n\) and polynomial dimension, the new bound is \(n^{-1/3}\) and therefore falsifies the polynomial-dimensional \(n^{-1/4}\) near-optimality conjecture. The proof uses a two-stage interpolation and a rank-free matrix-weighted Gaussian surface bound, which may be of independent interest.

The initial proof attempt was generated by ChatGPT 5.6 Pro (OpenAI) and subsequently corrected and rewritten by the authors. The machine-checked Lean formalization of the proof can be found at the \href{https://github.com/WeihanZhang2001/cubic-root-gaussian-approximation-under-unrestricted-covariance}
{GitHub repository}.
\end{abstract}

\section{Introduction and Main Results}\label{sec:intro}
Let \(X_1,\ldots,X_n\in\mathbb R^d\) be independent centered random vectors.
For \(a=(a_1,\ldots,a_d)^\top\in\mathbb R^d\), define
\begin{align*}
T_n(a)
&=
\max_{1\leq j\leq d}
\left\{
\frac{1}{\sqrt n}\sum_{i=1}^nX_{ij}+a_j
\right\}.
\end{align*}
We state two assumptions on \(X_i\).

\begin{assumption}[Subexponential tail]
\label{assu:sub.exp}
There exists \(B_n>0\), possibly depending on \(n\), such that
\begin{align*}
\mathbb E\left[\exp\left\{\frac{|X_{ij}|}{B_n}\right\}\right]
&\leq
2, \quad \forall~i \in [n], ~~\forall~j \in [d].
\end{align*}
\end{assumption}

\begin{assumption}[Non-vanishing marginal variance]
\label{assu:marginal-variance-lower-bound}
There exists a universal constant \(b>0\) such that
\begin{align*}
\frac{1}{n}\sum_{i=1}^n\mathbb E\left[X_{ij}^2\right]
&\geq
b^2, \quad \forall~j \in [d].
\end{align*}
\end{assumption}

Let \(\Sigma_n=n^{-1}\sum_{i=1}^n\mathbb E[X_iX_i^\top]\), which is finite under Assumption~\ref{assu:sub.exp}. Consider the Gaussian approximation to \(T_n(a)\),
\begin{align*}
T_n^G(a)
&=
\max_{1\leq j\leq d}(Z_j+a_j),
\quad
Z\sim N(0,\Sigma_n).
\end{align*}
For \(a\in\mathbb R^d\), define $F_{n,a}^G(t)
:=
\mathbb P\{T_n^G(a)\le t\}$, $t\in\mathbb R$. For \(\alpha\in(0,1)\), define the lower
\((1-\alpha)\)-quantile by
\begin{align}
c_{n,1-\alpha}^G(a)
:=
\inf\left\{
t\in\mathbb R:
F_{n,a}^G(t)\ge1-\alpha
\right\}.
\label{eq:lower-gaussian-quantile}
\end{align} The following theorem characterizes the accuracy of the Gaussian critical value.

\begin{theorem}
\label{thm:subexponential-gaussian-approximation}
Under Assumption~\ref{assu:sub.exp} and Assumption~\ref{assu:marginal-variance-lower-bound}, there exists \(C_b\in(0,\infty)\) depending only on \(b\), such that, uniformly over all \(n,d\), all distributions satisfying the assumptions, every \(a\in\mathbb R^d\) and \(\alpha\in(0,1)\),
\begin{align}
\left|
\mathbb P\left\{T_n(a)>c_{n,1-\alpha}^G(a)\right\}-\alpha
\right|
&\leq
C_b
\min\left\{
1,
\left(\frac{B_n^2}{n}\right)^{1/3}\{\log(2dn)\}^{7/3}
+
\frac{B_n}{\sqrt n}\{\log(2dn)\}^{5/2}
\right\}.
\label{eq:subexponential-gaussian-critical-value-bound}
\end{align}
\end{theorem}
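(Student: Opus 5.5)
We reduce the statement to a Kolmogorov-distance bound between \(T_n(a)\) and \(T_n^G(a)\), plus handling of atoms of \(F^G_{n,a}\). Under Assumption~\ref{assu:marginal-variance-lower-bound} each \(Z_j\) has variance at least \(b^2\), so \(T_n^G(a)\) has a continuous distribution. Indeed, \(\mathbb P\{T^G_n(a)=t\}\le\sum_j\mathbb P\{Z_j=t-a_j\}=0\). Hence \(F^G_{n,a}(c^G_{n,1-\alpha}(a))=1-\alpha\), and the left side of \eqref{eq:subexponential-gaussian-critical-value-bound} is at most \(\rho_n:=\sup_t|\mathbb P\{T_n(a)\le t\}-\mathbb P\{T_n^G(a)\le t\}|\). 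The bound by \(1\) is trivial, so it suffices to bound \(\rho_n\) by the second term in the minimum.

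\textbf{Smoothing.} We replace the indicator \(1\{\max_j(x_j+a_j)\le t\}\) by a smooth function \(g\circ F_\beta\). Here \(F_\beta(x)=\beta^{-1}\log\sum_j e^{\beta(x_j+a_j)}\) is the soft-max, and \(g\) is a \(C^3\) ramp of width \(\delta\). We take \(\beta\asymp\log d/\delta\), so that \(0\le F_\beta-\max\le\beta^{-1}\log d\). The derivatives of \(g\circ F_\beta\) have the standard stability property: the \(\ell_1\)-sums of the third derivatives are \(O(\delta^{-3}+\delta^{-1}\beta^2)\). Comparing expectations then costs an anti-concentration term for the Gaussian maximum. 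Nazarov's inequality under variance \(\ge b^2\) gives \(C_b\delta\sqrt{\log d}\). The unrestricted covariance is what forbids better-than-Nazarov control of the density of the maximum, and the usual argument balances \(\delta\) against \(n^{-1/2}\delta^{-3}B_n^3\), which only yields \(n^{-1/4}\). To reach \(n^{-1/3}\), we use the two-stage interpolation announced in the abstract.

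\textbf{Two-stage interpolation.} First, truncate at level \(B_n\log(dn)\). By Assumption~\ref{assu:sub.exp} this costs \(O(n^{-1})\) in probability after a union bound, and it changes the mean and covariance by negligible amounts. Then run a Slepian–Stein/Lindeberg interpolation \(V(s)=\sqrt s\,S_n+\sqrt{1-s}\,Z\) in two stages. In the first stage we expand only to second order and use that the mean and covariance are matched. In the second stage we handle the third-order remainder by integrating it against the Gaussian component. The remainder has the form \(\mathbb E[\sum_{jkl}\partial_{jkl}(g\circ F_\beta)(V)X_{ij}X_{ik}X_{il}]\). Gaussian integration by parts in the \(\sqrt{1-s}Z\) direction moves one derivative onto the Gaussian density. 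The remainder then becomes a matrix-weighted surface integral \(\mathbb E[\mathrm{tr}(W\nabla^2 h(V))]\), with weights \(W\) built from the third moments of \(X_i\). The key auxiliary estimate is the ``rank-free matrix-weighted Gaussian surface bound.'' It states that for a PSD (or bounded-operator) weight matrix \(W\) and \(Z\sim N(0,\Sigma)\) with diagonal \(\ge b^2\),
\[
\Bigl|\mathbb E\bigl[\textstyle\sum_{j,k}W_{jk}\,\partial_{jk}g(F_\beta(x+Z))\bigr]\Bigr|\lesssim_b \|W\|_{\max}\,\delta^{-1}\,\mathrm{polylog}(d),
\]
with no dependence on the rank of \(\Sigma\). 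This saves one power of \(\delta\) relative to the crude \(\delta^{-3}\) bound. The resulting error is \(\delta\sqrt{\log d}+n^{-1/2}B_n\,\delta^{-2}\,\mathrm{polylog}\). Optimizing gives \(\delta\asymp(B_n^2/n)^{1/3}\) times logs, which matches \((B_n^2/n)^{1/3}\{\log(2dn)\}^{7/3}\). The residual term \(B_n n^{-1/2}\{\log(2dn)\}^{5/2}\) collects the pieces not improved by the surface bound, namely the truncation bias and the \(\beta^2\delta^{-1}\) part of the derivatives.

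\textbf{Main obstacle.} The hardest step is the surface bound itself, i.e. controlling second derivatives of the smoothed maximum integrated against a possibly degenerate Gaussian uniformly in \(\Sigma\). We would first try to write \(\partial_{jk}\) of \(g\circ F_\beta\) via the softmax weights \(\pi_j\). Integrating by parts along the coordinate with the largest variance then reduces the problem to one-dimensional Gaussian densities of \(Z_j\), conditioned on the event that coordinate \(j\) is near the maximum. We would bound the result by \(\sum_j\pi_j\cdot\) (density of \(Z_j\) at level \(\le 1/b\)). This uses only the marginal variances, which is what makes the bound rank-free.
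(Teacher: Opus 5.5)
\textbf{There is a genuine gap: your rate bookkeeping does not close.} Balancing $\delta\sqrt{\log d}$ against $n^{-1/2}B_n\delta^{-2}$ gives $\delta^3\asymp B_n/\sqrt n$, i.e.\ $\delta\asymp(B_n^2/n)^{1/6}$. That is an $n^{-1/6}$ rate, not $n^{-1/3}$. Likewise, $\delta$ against $n^{-1/2}\delta^{-3}$ gives $n^{-1/8}$, not $n^{-1/4}$.

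The third-order term is first order in $b_{d,n}\asymp B_n\log(dn)/\sqrt n$. If it loses $\delta^{-k}$, balancing gives $n^{-1/(2k+2)}$, so even your conjectured $\delta^{-1}$ surface bound yields only $n^{-1/4}$. Reaching $n^{-1/3}$ requires two things.
\begin{enumerate}
\item[(a)] The third-order term must be bounded with \emph{no} loss in the smoothing parameter. In the paper it is $Cb_{d,n}\{\log(4d)\}^{3/2}$ uniformly in $\beta$. It produces the $B_nn^{-1/2}\{\log(2dn)\}^{5/2}$ term, not the main term.
\item[(b)] All other interpolation errors must be second order in $b_{d,n}$ with loss at most $\beta^2$, essentially $b_{d,n}^2\beta^2\log(4d)\log(1/\tau)$. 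Balancing this against the smoothing error of order $\log(d/\delta)\sqrt{\log d}/\beta$ gives $\beta\asymp b_{d,n}^{-2/3}$ and the $(B_n^2/n)^{1/3}$ term.
\end{enumerate}

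\textbf{Neither (a) nor (b) is available on your single Slepian path $\sqrt s\,S_n+\sqrt{1-s}\,Z$.} A $\beta$-free third-order bound needs three Gaussian integrations by parts, $\mathbb E[\partial^3\pi(H+\sigma G)]=\sigma^{-3}\mathbb E[H_3(G)\pi(H+\sigma G)]$. Then the Gumbel-max representation writes the selector as an average of indicators of polyhedra with at most $2d$ facets. Gauss--Green turns the term into the matrix-weighted facet integral of Proposition~\ref{prop:matrix-surface}, whose weights have operator norm at most $b_{d,n}$ in whitened coordinates on $\operatorname{range}(\widetilde\Sigma_n)$.

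On your path $\sigma^{-3}=(1-s)^{-3/2}$, which is not integrable at the $S_n$ endpoint. Because you match only two moments, the third-order term persists right up to that endpoint. The paper's remedy is the intermediate proxy $T=G/\sqrt2+2\sum_iD_iY_i'$, which matches $S$ through third moments. The Bernoulli-thinning path $S\to T$ then has no third-order term at all, and on $T\to G$ the Gaussian component has variance at least $1/2$.

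For (b), extracting third moments in any Lindeberg/Stein expansion produces fourth-order remainders of crude size $nb_{d,n}^4\beta^4$. These alone cap the rate at $n^{-1/5}$, and your proposal does not address them. The paper controls them with three tools: ghost completion, which recouples the leave-one-out states into one common state; two Gaussian integrations by parts on a repeated direction pair; and the convex-partition entropy bound of Lemma~\ref{lem:convex-cell-contraction}. This gives $b_{d,n}^2\beta^2\log(4d)/s$ on $[\tau,1]$, with a crude bound on $[0,\tau]$.

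Two further points. Your smoothing $g\circ F_\beta$ has non-polyhedral level sets, so the Gumbel/polyhedral machinery does not apply to it directly; the paper uses the dummy-coordinate softmax $\pi_0$ for exactly this reason. Your sketch of the surface bound, integrating along a maximal-variance coordinate, is a heuristic rather than an argument. In the paper, rank-freeness comes from whitening on the covariance support and a surface bound that depends only on the facet count.
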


Assumption~\ref{assu:sub.exp} rules out heavy-tailed coordinates, which is the  Condition E in \citet{CCKK2022}.
Assumption~\ref{assu:marginal-variance-lower-bound} prevents the marginal variances from vanishing, as shrinking a non-Gaussian coordinate shrinks \(B_n\) and the upper bound but not necessarily the approximation error (see Example~\ref{exam:counter.variance.lower.bound} for an illustration).
Assumption~\ref{assu:marginal-variance-lower-bound} is related to the marginal-variance component of Condition M in \citet{CCKK2022}, but {\color{black}their} Condition M additionally assumes
\(n^{-1}\sum_{i=1}^n\mathbb E[X_{ij}^4]\leq B_n^2b_2^2\).
We highlight that no further structural condition is imposed on \(\Sigma_n\). In particular, the covariance matrix may be degenerate.

We compare Theorem~\ref{thm:subexponential-gaussian-approximation} with the open conjecture result in \citet{CCKK2022}. 
Under the additional fourth-moment condition imposed by \citet{CCKK2022}, their unrestricted-covariance bound is of order \((B_n^2/n)^{1/4}\{\log(2dn)\}^{5/4}\). The ratios of the two terms in Theorem~\ref{thm:subexponential-gaussian-approximation} to this quarter-power bound are
\begin{align*}
\frac{(B_n^2/n)^{1/3}\{\log(2dn)\}^{7/3}}
{(B_n^2/n)^{1/4}\{\log(2dn)\}^{5/4}}
&=
\left\{
\frac{B_n^2\{\log(2dn)\}^{13}}{n}
\right\}^{1/12},
\\
\frac{(B_n/\sqrt n)\{\log(2dn)\}^{5/2}}
{(B_n^2/n)^{1/4}\{\log(2dn)\}^{5/4}}
&=
\left(\frac{B_n^2}{n}\right)^{1/4}
\{\log(2dn)\}^{5/4}.
\end{align*}
Thus, \(B_n^2\{\log(2dn)\}^{13}/n\to0\) implies that both ratios converge to zero, and hence that the present bound is of smaller order. In particular, for a fixed $B_n$ and polynomial dimension, the conjecture requires a worst-case lower bound of order \(n^{-1/4}\), whereas Theorem~\ref{thm:subexponential-gaussian-approximation} gives a uniform upper bound of order \(n^{-1/3}\), thereby disproving the polynomial-dimensional \(n^{-1/4}\) near-optimality conjecture.
Table~\ref{tab:literature} provides a more comprehensive comparison {\color{black}of} Theorem~\ref{thm:subexponential-gaussian-approximation} with existing results, and further literature review can be found in Section~\ref{appe:sec:literature} of the appendix.

\begin{table}[tbp]
\centering
\caption{Summary of selected Gaussian approximation results related to
Theorem~\ref{thm:subexponential-gaussian-approximation}. ``Approximation rate'' is stated for bounded \(B_n\) and polynomial dimension up to
logarithmic factors.
``Unrestricted covariance'' indicates whether singular covariance is allowed. 
For {\color{black}``Distribution-free''}, moment and tail conditions are not regarded as additional distributional structure.}
\label{tab:literature}
\small
\begin{tabular}{p{8cm}ccc}
\toprule
Results
&
\shortstack{Unrestricted\\covariance}
&
\shortstack{Distribution\\-free}
&
\shortstack{Approximation\\rate}
\\
\midrule

\citet{FangKoike2021}; \citet{KuchibhotlaRinaldo2020};
\citet{Lopes2022}; \citet{CCK2023}
&
\(\times\)
&
\(\checkmark\)
&
\(n^{-1/3}\) to \(n^{-1/2}\)
\\
\citet{FangKoikeLiuZhao2023}
&
\(\times\)
&
\(\checkmark\)
&
\(n^{-1/2}\)
\\
\citet{FangKoike2024}
&
\(\checkmark\)
&
\(\times\) (iid log-concave)
&
\(n^{-1/2}\)
\\
\citet{CCK2017}; \citet{Koike2021}
&
\(\checkmark\)
&
\(\checkmark\)
&
\(n^{-1/6}\)
\\
\citet{CCKK2022}
&
\(\checkmark\)
&
\(\checkmark\)
&
\(n^{-1/4}\)
\\
Theorem~\ref{thm:subexponential-gaussian-approximation}
&
\(\boldsymbol{\checkmark}\)
&
\(\boldsymbol{\checkmark}\)
&
\(\boldsymbol{n^{-1/3}}\)
\\
\bottomrule
\end{tabular}
\end{table}

\paragraph{Contributions.}
We give an unrestricted-covariance upper bound with polynomial order \(n^{-1/3}\) in the fixed or moderately growing-envelope and polynomial-dimensional regime, providing the best known rate in this regime and disproving the polynomial-dimensional \(n^{-1/4}\) near-optimality conjecture in \citet{CCK2023}.
For the proof, we highlight three ingredients.
\begin{itemize}
    \item To bound the smoothed Gaussian approximation error, we consider a two-stage interpolation and leverage different properties along the two paths.
    
    \item In the first interpolation, moment matching through order three eliminates all terms through order three.
    We use leave-one-out ghost completion and the convex-partition bound to control the resulting fourth-order and higher-order terms.
    
    \item In the second interpolation, a third-order term remains, and we leverage the uniformly nondegenerate Gaussian component {\color{black}to} control it using a rank-free matrix-weighted Gaussian surface bound, which {\color{black}may also be} of independent interest.
\end{itemize}

\paragraph{Organization.}
Section~\ref{sec:preparation} provides details of the preprocessing steps, including truncation, marginal standardization, and support restriction. 
Section~\ref{sec:smooth-selector} introduces a smooth approximation to the maximum event probability. Section~\ref{sec:interpolation} analyzes the two interpolation paths to bound the smooth probability difference. 
Section~\ref{sec:summary} combines the bounds and chooses the hyperparameters to complete the proof. Supporting results, counterexamples, and the literature review are deferred to the appendix.

\paragraph{Notations.}
We write $\mathcal R^d$ for the class of axis-aligned rectangles $\prod_{j=1}^d I_j$, where each $I_j\subseteq\mathbb R$ is an interval.
For random vectors $V,W\in\mathbb R^d$, define their rectangle distance by
$
\rho_{\mathcal R}(V,W)
=
\sup_{A\in\mathcal R^d}
\left|\mathbb P(V\in A)-\mathbb P(W\in A)\right|$.
For a covariance matrix $\Sigma \in \mathbb R^{d\times d}$,
let
\(
\ker(\Sigma)
=
\{v\in\mathbb R^d: \Sigma v=0\}
\)
denote the null space of $\Sigma$, and let
\(
\operatorname{range}(\Sigma)
=
\{\Sigma v:v\in\mathbb R^d\}
=
\ker(\Sigma)^\perp
\)
denote its column space.
Let $\otimes$ denote the tensor product (for
${\color{black}z=(z_1,\ldots,z_r)^\top}\in\mathbb R^r$ and an integer $k\geq1$,
$z^{\otimes k}$ denotes the $k$-fold tensor product of $z$, with entries
$
(z^{\otimes k})_{a_1,\ldots,a_k}
=
\prod_{\ell=1}^k z_{a_\ell}$, $
a_1,\ldots,a_k\in\{1,\ldots,{\color{black}r}\}$).
We write $\varphi_E$ and $\gamma_E$ for the standard Gaussian density and measure on a Euclidean subspace $E$, and use $\varphi_r$ and $\gamma_r$ when $E=\mathbb R^r$.

\section{Preprocessing}
\label{sec:preparation}

\subsection{Truncation and recentering}

Let \(A_i=\{\|X_i\|_\infty\leq32B_n\log(2dn)\}\). Let \(X_{1,A_1},\ldots,X_{n,A_n}\) be independent with \(X_{i,A_i}\sim\mathbb P(X_i\mid A_i)\), and set \(\widetilde X_i=X_{i,A_i}-\mathbb E[X_{i,A_i}]\) and \(\widetilde\Sigma_n=n^{-1}\sum_{i=1}^n\mathbb E[\widetilde X_i\widetilde X_i^\top]\). The preprocessing lemma in
Appendix~\ref{sec:preparation-appendix} gives the following bounds,
without restricting the parameter regime, for a universal constant
\(C>0\):
\begin{align}\label{eq:preprocessing-errors-main}
\begin{split}
\sum_{i=1}^n\mathbb P(A_i^c)
&\leq
(2dn)^{-31}
\leq
n^{-20},\\
\left\|
\frac{1}{\sqrt n}
\sum_{i=1}^n\mathbb E[X_{i,A_i}]
\right\|_\infty
&\leq
C\sqrt n\,dB_n\{1+\log(2dn)\}(2dn)^{-32},\\
\|\widetilde\Sigma_n-\Sigma_n\|_\infty
&\leq
CdB_n^2\{1+\{\log(2dn)\}^2\}(2dn)^{-32}.
\end{split}
\end{align}
Moreover, \(\|\widetilde X_i\|_\infty\leq64B_n\log(2dn)\) a.s.

Write
\[
\mathfrak r_{n,d}
:=
\left(\frac{B_n^2}{n}\right)^{1/3}
\{\log(2dn)\}^{7/3}
+
\frac{B_n}{\sqrt n}
\{\log(2dn)\}^{5/2}.
\]
Choose \(c_b>0\) sufficiently small and \(N_b\in\mathbb N\)
sufficiently large, both depending only on \(b\). We refer to
\begin{align}
\mathfrak r_{n,d}\leq c_b,
\qquad
n\geq N_b
\label{eq:preprocessing-regime}
\end{align}
as the nontrivial preprocessing regime.

If \(\mathfrak r_{n,d}>c_b\), the theorem follows from the trivial
probability bound after increasing \(C_b\). If \(n<N_b\), the finitely
many remaining values of \(n\) are likewise absorbed into \(C_b\);
here \(B_n\geq b/2\) makes the target rate uniformly bounded below
over \(d\). Thus it remains to work under
Eq.~\eqref{eq:preprocessing-regime}.

In this regime, Eq.~\eqref{eq:preprocessing-errors-main} gives
\[
\left\|
\frac{1}{\sqrt n}
\sum_{i=1}^n\mathbb E[X_{i,A_i}]
\right\|_\infty
+
\|\widetilde\Sigma_n-\Sigma_n\|_\infty
\leq
n^{-10}.
\]

\subsection{Marginal variance standardization}

Under Eq.~\eqref{eq:preprocessing-regime}, the estimates proved in
Appendix~\ref{sec:preparation-appendix} give
\[
(\widetilde\Sigma_n)_{jj}\geq b^2/2,
\qquad
1\leq j\leq d.
\]
Consequently, the marginal standardization is well-defined. Let
\[
D_n
:=
\operatorname{diag}
\left\{
(\widetilde\Sigma_n)_{jj}^{-1/2}
:
1\leq j\leq d
\right\},
\]
and replace \(\widetilde X_i\) and \(\widetilde\Sigma_n\) by
\(D_n\widetilde X_i\) and
\(D_n\widetilde\Sigma_nD_n\), respectively. With a slight abuse of
notation, we continue to use the same symbols for the standardized
objects. By Lemma~\ref{lem:positive-diagonal-invariance}, this
standardization leaves the rectangle distance unchanged. We then have
\[
(\widetilde\Sigma_n)_{jj}=1,
\qquad
\max_{1\leq i\leq n}\|\widetilde X_i\|_\infty
\leq
\sqrt n\,b_{d,n},
\qquad
b_{d,n}
:=
\frac{64\sqrt2B_n\log(2dn)}{b\sqrt n}.
\]

\begin{lemma}
\label{lem:preprocessing-transfer-main}
Under the preprocessing regime
\eqref{eq:preprocessing-regime}, there exists a constant \(C_b>0\),
depending only on \(b\), such that, with $W=n^{-1/2}\sum_{i=1}^nX_i$, $Z\sim N(0,\Sigma_n)$, and with \(\widetilde W\) and
\(\widetilde Z\sim N(0,\widetilde\Sigma_n)\) denoting the corresponding
objects after the final marginal standardization,
\[
\sup_{A\in\mathcal R^d}
\left|\mathbb P(W\in A)-\mathbb P(Z\in A)\right|
\leq
\sup_{A\in\mathcal R^d}
\left|
\mathbb P(\widetilde W\in A)
-
\mathbb P(\widetilde Z\in A)
\right|
+
n^{-20}
+
C_b n^{-4}.
\]
\end{lemma}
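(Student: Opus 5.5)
We chain three comparisons via the triangle inequality, $W\to W^{A}\to \widetilde W+\mu\to\widetilde W$, and on the Gaussian side $Z\to\widetilde Z$. Here $W^{A}:=n^{-1/2}\sum_i X_{i,A_i}$ and $\mu:=n^{-1/2}\sum_i\mathbb E[X_{i,A_i}]$, so that $W^A=\widetilde W+\mu$ before standardization. Positive diagonal scaling maps $\mathcal R^d$ onto itself, so by Lemma~\ref{lem:positive-diagonal-invariance} we may work with the unstandardized truncated objects throughout and apply $D_n$ only at the end. This applies to both $\widetilde W$ and $\widetilde Z$ simultaneously, since both are scaled by the same $D_n$.

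\textbf{Step 1 (truncation coupling).} Construct $X_{i,A_i}$ on the same space as $X_i$: on $A_i$ set $X_{i,A_i}=X_i$, and on $A_i^c$ draw an independent copy from $\mathbb P(X_i\mid A_i)$. This has the correct law and preserves independence across $i$. Then $W=W^A$ off $\bigcup_iA_i^c$, so for every rectangle $A$,
\[
|\mathbb P(W\in A)-\mathbb P(W^A\in A)|\le\sum_i\mathbb P(A_i^c)\le n^{-20}
\]
by \eqref{eq:preprocessing-errors-main}. This produces the $n^{-20}$ term.

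\textbf{Step 2 (removing the mean shift).} Write $\mathbb P(W^A\in A)=\mathbb P(\widetilde W\in A-\mu)$, and note that $A-\mu$ is again a rectangle. We have
\[
|\mathbb P(\widetilde W\in A-\mu)-\mathbb P(\widetilde Z\in A-\mu)|\le\rho_{\mathcal R}(\widetilde W,\widetilde Z),
\]
so it remains to bound $|\mathbb P(\widetilde Z\in A-\mu)-\mathbb P(Z\in A)|$. This splits into two pieces:
\[
|\mathbb P(\widetilde Z\in A-\mu)-\mathbb P(\widetilde Z\in A)|
\quad\text{and}\quad
|\mathbb P(\widetilde Z\in A)-\mathbb P(Z\in A)|.
\]
For the first piece, $\widetilde\Sigma_{jj}\ge b^2/2$, so after standardization the shift has size at most $\|\mu\|_\infty\sqrt2/b\le C_bn^{-10}$. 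Nazarov's anti-concentration inequality for Gaussian rectangles with unit marginal variances (a standard fact I would invoke, presumably recorded in the appendix) bounds this piece by $C\|\mu'\|_\infty\sqrt{\log d}$. Since $\mathfrak r_{n,d}\le c_b$ forces $\log(2dn)\lesssim n^{1/5}$, this is at most $C_bn^{-9}$.

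\textbf{Step 3 (Gaussian covariance comparison).} This is the main obstacle, because $\Sigma_n$ may be degenerate, so density-ratio and Pinsker arguments fail. I would try the Gaussian-to-Gaussian comparison for rectangles under unit-variance normalization (Chernozhukov--Chetverikov--Kato), which gives a bound of order $C_b\Delta^{1/3}\{\log(2d)\}^{2/3}$ with $\Delta=\|\widetilde\Sigma_n-\Sigma_n\|_\infty\le n^{-10}$. First put both covariances on a common scaling: rescale $Z$ by $D_n$ as well. Because the diagonals of $\Sigma_n$ and $\widetilde\Sigma_n$ differ by at most $n^{-10}$ and both are bounded below, the entrywise difference of the standardized covariances is $O_b(n^{-10})$. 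The comparison then gives
\[
C_bn^{-10/3}(\log 2dn)^{2/3}\le C_bn^{-3}.
\]
An alternative that avoids citing the comparison theorem is a direct Slepian interpolation with a smooth max, combined with Nazarov, which yields the same cube-root dependence.

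Summing the three steps gives the right side $\rho_{\mathcal R}(\widetilde W,\widetilde Z)+n^{-20}+C_bn^{-4}$. The margin between $n^{-3}$ and $n^{-4}$ is absorbed by tightening $N_b$, or by using the finer error $\Delta\le n^{-30}$ actually available from \eqref{eq:preprocessing-errors-main} in this regime.
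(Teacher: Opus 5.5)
Your proposal is correct and follows the paper's proof. The paper uses the same chain: truncation error $n^{-20}$, a Nazarov bound for the mean shift, a Gaussian-to-Gaussian covariance comparison, and Lemma~\ref{lem:positive-diagonal-invariance} to pass to the standardized objects. Your explicit coupling in Step~1 is just a hands-on version of the total-variation bound in Lemma~\ref{lem:product-conditioning}.

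The one real difference is Step~3. You use the cube-root comparison of \citet{CCK2017}. Lemma~\ref{lem:Gaussian-transfer} instead uses the square-root comparison of \citet{CCKK2022}, where the crude bound $\Delta\le n^{-10}$ already suffices: $C_b\bigl(n^{-10}\{\log(2d)\}^2\bigr)^{1/2}=C_bn^{-5}\log(2d)\le C_bn^{-4}$.

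With the cube-root bound, your first proposed fix fails. A term $C_bn^{-3}$ cannot be absorbed into $C_bn^{-4}$ by enlarging $N_b$, because the ratio grows like $n$.

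Your second fix does work. Since $\mathfrak r_{n,d}\le c_b$ forces $B_n^2\le n$, \eqref{eq:preprocessing-errors-main} gives $\Delta\le C(2dn)^{-30}$. Then $C_b\Delta^{1/3}\{\log(2d)\}^{2/3}\le C_bn^{-10}$, and with that choice the proof is complete.
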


The proof of Lemma~\ref{lem:preprocessing-transfer-main} is given in Appendix~\ref{sec:preparation-appendix}. 

\subsection{Support restriction}
\label{subsec:support-restriction}

Since $\widetilde\Sigma_n$ may be singular, we restrict to  $\operatorname{range}(\widetilde\Sigma_n)$ and whiten only on this support. Let $r=\operatorname{rank}(\widetilde\Sigma_n)$ and factor $\widetilde\Sigma_n=UU^\top$, where $U\in\mathbb R^{d\times r}$ has full column rank. Write $u_j^\top$ for the $j$th row of $U$, so $\|u_j\|_2^2=(\widetilde\Sigma_n)_{jj}=1$. If $v\in\ker(\widetilde\Sigma_n)$, then $\sum_i\mathbb E\{v^\top\widetilde X_i/\sqrt n\}^2=0$, and hence $v^\top\widetilde X_i=0$ a.s. for every fixed $i$ and thus $\widetilde X_i/\sqrt n\in\ker(\widetilde\Sigma_n)^\perp=\operatorname{range}(U)$ a.s. With $U^+=(U^\top U)^{-1}U^\top$, define $Y_i=U^+(\widetilde X_i/\sqrt n)$. Then
\begin{align*}
UY_i
=
\frac{\widetilde X_i}{\sqrt n},
\quad
\sum_{i=1}^n\mathbb E[Y_iY_i^\top]
=
I_r,
\quad
|\langle u_j,Y_i\rangle|
=
\frac{|\widetilde X_{ij}|}{\sqrt n}
\leq {\color{black}b_{d,n}}.
\end{align*}
Although $Y_i$ may have a large Euclidean norm when $\widetilde\Sigma_n$ has small positive eigenvalues, the proof uses only the bound on $|\langle u_j,Y_i\rangle|$ and therefore imposes no lower bound on the smallest positive eigenvalue.

Theorem~\ref{thm:subexponential-gaussian-approximation} investigates maximum statistics, whose distributional events correspond to axis-aligned rectangles in $\mathbb R^d$. An empty rectangle has probability zero under both laws and hence
contributes zero to the rectangle distance. By
Lemma~\ref{lem:closed-rectangles}, it is therefore sufficient to
consider nonempty closed rectangles whose endpoints are all finite. Complete the normal list by setting $u_{d+j}=-u_j$ for $j=1,\ldots,d$, which spans $\mathbb R^r$. Under $x=Uy$, the rectangle $\prod_{j=1}^d[a_j^-,a_j^+]$ is mapped to
\begin{align*}
\bigcap_{j=1}^d\{y\in\mathbb R^r:\langle u_j,y\rangle\leq a_j^+\}
\cap
\bigcap_{j=1}^d\{y\in\mathbb R^r:\langle u_{d+j},y\rangle\leq-a_j^-\},
\end{align*}
a polyhedron described by at most $2d$ halfspaces.

\section{Smooth approximation of maximum event probability}
\label{sec:smooth-selector}

To prove Theorem~\ref{thm:subexponential-gaussian-approximation}, it is sufficient to establish the same upper bound for the Kolmogorov distance between the maximum statistics,
\begin{align*}
\sup_{a\in\mathbb R^d,\;t\in\mathbb R}
\left|
\mathbb P\{T_n(a)\leq t\}
-
\mathbb P\{T_n^G(a)\leq t\}
\right|.
\end{align*}
The maximum statistic and the indicator of its corresponding polyhedral event are not smooth, which prevents direct Taylor expansion along an interpolation path. We therefore use the softmax selector, a standard smooth approximation to the maximum used in high-dimensional Gaussian approximation (see, for example, \citet{CCK2017}). 

For \(a_0\in\mathbb R\),
\(a=(a_1,\ldots,a_{2d})\in\mathbb R^{2d}\), and \(\beta>0\), let $P_a
=
\bigcap_{j=1}^{2d}
\left\{
y\in\mathbb R^r:
\langle u_j,y\rangle\leq a_j
\right\}$. Set $s_0(y)=a_0$ and $s_j(y)=\langle u_j,y\rangle-a_j$, for $j=1,\ldots,2d$, and define
\begin{align}
\pi_j(y)
=
\frac{\exp\{\beta s_j(y)\}}{\sum_{\ell=0}^{2d}\exp\{\beta s_\ell(y)\}},
\quad j=0,\ldots,2d.
\label{eq:selector-definition}
\end{align}
We denote $\pi_0(y)$ by $\pi_{\beta,a_0,a}(y)$.
Here $\beta$ is the inverse smoothing bandwidth: a larger $\beta$ gives a sharper approximation to the indicator function $\mathbf 1_{P_a}$, whereas a smaller $\beta$ makes the selector smoother.

\begin{proposition}
\label{prop:probability-comparison-through-selectors}
Fix $0<\delta\leq1/4$, let $h=\log(8d/\delta)$, and $a_0^+=h/\beta$, $a_0^-=-h/\beta$. Let $S,G\in\mathbb R^r$ be random vectors and suppose that
\begin{align}
\sup_{a\in\mathbb R^{2d}, a_0 \in \{a_0^-,a_0^+\}}
\left|
\mathbb E\pi_{\beta,a_0,a}(S)
-
\mathbb E\pi_{\beta,a_0,a}(G)
\right|
&\leq\Delta.
\label{eq:selector-comparison-hypothesis}
\end{align}
Then there exists a universal constant $C>0$ such that 
\begin{align*}
\sup_{a\in\mathbb R^{2d}}
|\mathbb P(S\in P_a)-\mathbb P(G\in P_a)|
&\leq
C\left[
\Delta+\delta+
\sup_{\substack{a\in\mathbb R^{2d}\\t\in\mathbb R}}
\mathbb P\left
\{
t<\max_{1\leq j\leq2d}\{\langle u_j,G\rangle-a_j\}
\leq t+\frac{2h}{\beta}
\right\}
\right].
\end{align*}
\end{proposition}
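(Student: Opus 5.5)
Write $M_a(y)=\max_{1\le j\le 2d}\{\langle u_j,y\rangle-a_j\}$, so that $P_a=\{y:M_a(y)\le 0\}$. The plan is a standard sandwich argument: show that $\pi_{\beta,a_0^+,a}$ and $\pi_{\beta,a_0^-,a}$ bracket the indicator $\mathbf 1_{P_a}$ up to $\delta$, once shifted in the level $a$. We then transfer the bracket from $S$ to $G$ using \eqref{eq:selector-comparison-hypothesis}. The remaining gap under $G$ is an anti-concentration probability of $M_a(G)$ over a window of width $2h/\beta$.

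\emph{Step 1: pointwise bounds.} With $s_\ell=\langle u_\ell,y\rangle-a_\ell$,
\[
\pi_0(y)=\Bigl\{1+\sum_{\ell=1}^{2d}e^{\beta(s_\ell-a_0)}\Bigr\}^{-1}.
\]
Take $a_0=a_0^+=h/\beta$. If $M_a(y)\le 0$, then every $s_\ell-a_0\le -h/\beta$, so the sum is at most $2d\,e^{-h}=\delta/4$ and $\pi_0\ge 1-\delta/4$. Since $\pi_0\le1$ always, this gives $\pi_{\beta,a_0^+,a}\ge \mathbf 1_{P_a}-\delta/4$.

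Take $a_0=a_0^-=-h/\beta$. If $M_a(y)>0$, some term equals at least $e^{h}=8d/\delta\ge 8/\delta$, so $\pi_0\le\delta/8$. Hence $\pi_{\beta,a_0^-,a}\le \mathbf 1_{P_a}+\delta$.

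Applying these with shifted levels $a\pm c\mathbf 1$ yields
\[
\mathbf 1\{M_a\le -2h/\beta\}-\delta \;\le\; \pi_{\beta,a_0^+,a-(2h/\beta)\mathbf 1}\;\lesssim\;\cdots
\]
The shift has to be chosen so that the same pair $a_0^\pm$ sandwich $\mathbf 1\{M_a\le0\}$ between $\mathbf 1\{M_a\le -2h/\beta\}$ and $\mathbf 1\{M_a\le 2h/\beta\}$, up to $O(\delta)$.

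\emph{Step 2: sandwich for $\mathbb P(S\in P_a)$.} The upper direction uses the $a_0^-$ bound:
\[
\mathbb P(S\in P_a)\le \mathbb E\pi_{\beta,a_0^+,a}(S)+\delta .
\]
Concretely, $\mathbf 1\{M_a(y)\le 0\}\le \pi_{\beta,a_0^+,a}(y)+\delta/4$. Next, relate $\pi_{\beta,a_0^+,a}$ to $\pi_{\beta,a_0^-,a'}$ with $a'=a+(2h/\beta)\mathbf 1$. The softmax depends only on the differences $s_\ell-a_0$, so these two are \emph{identical}; this identity is the key observation. So
\[
\mathbb P(S\in P_a)\le \mathbb E\pi_{\beta,a_0^-,a'}(S)+\delta\le \mathbb E\pi_{\beta,a_0^-,a'}(G)+\Delta+\delta\le \mathbb P(M_{a}(G)\le 2h/\beta)+\Delta+2\delta,
\]
using the hypothesis and Step 1 with level $a'$. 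Then
\[
\mathbb P(M_a(G)\le 2h/\beta)=\mathbb P(G\in P_a)+\mathbb P\{0<M_a(G)\le 2h/\beta\},
\]
and the last term is the anti-concentration supremum with $t=0$.

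The lower bound is symmetric. By the same identity,
\[
\mathbb P(S\in P_a)\ge \mathbb E\pi_{\beta,a_0^-,a}(S)-\delta=\mathbb E\pi_{\beta,a_0^+,a-(2h/\beta)\mathbf 1}(S)-\delta.
\]
Compare this with $G$ via the hypothesis, and apply Step 1 to get a lower bound by $\mathbb P\{M_a(G)\le -2h/\beta\}-\Delta-2\delta$. This equals $\mathbb P(G\in P_a)-\mathbb P\{-2h/\beta<M_a(G)\le0\}-\Delta-2\delta$, which is again controlled by the anti-concentration term, now with $t=-2h/\beta$.

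\emph{Main obstacle.} The only delicate point is the bookkeeping of the level shift. The hypothesis allows only the two values $a_0^\pm$ but arbitrary $a$, so the argument needs the translation invariance of $\pi_0$ under $(a_0,a)\mapsto(a_0+c,a+c\mathbf 1)$ to move between them. One must also check that the constants in Step 1 ($2de^{-h}=\delta/4$ and the $\delta/8$ bound) give a universal $C$; since $\delta\le1/4$ this is routine.
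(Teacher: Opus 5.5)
Your proposal is correct and is essentially the paper's argument. The paper proves one-sided pointwise bounds, $(1-\delta)\mathbf 1\{M_a\le 0\}\le \pi_{\beta,a_0^+,a}\le \mathbf 1\{M_a\le 2h/\beta\}+\delta$ and the analogous ones for $a_0^-$, transfers them from $S$ to $G$ via the hypothesis, and gets the reverse inequality by shifting every threshold by $2h/\beta$; this is your translation identity $\pi_{\beta,a_0^+,a}=\pi_{\beta,a_0^-,a+(2h/\beta)\mathbf 1}$ in different bookkeeping. The unfinished display at the end of your Step 1 is superfluous and should be deleted, since Step 2 is self-contained.
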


Proposition~\ref{prop:probability-comparison-through-selectors} separates the polyhedral probability error into the smooth expectation difference $\Delta$, the selector approximation error $\delta$, and a Gaussian boundary layer of width $2h/\beta$. The first term is bounded via interpolation in Section~\ref{sec:interpolation}. The boundary layer is controlled by the conditional form of Nazarov's inequality in Lemma~\ref{lem:conditional-nazarov}, based on \citet{CCK2015}.

\section{Smooth probability comparison via path interpolation}
\label{sec:interpolation}

Let $S=\sum_{i=1}^nY_i$ and let $G\sim N(0,I_r)$. A direct interpolation from $S$ to $G$ leaves a third-order term, while the independent Gaussian component at the initial endpoint $S$ vanishes. Therefore, we introduce an intermediate random vector
\begin{align*}
T
=
\frac{1}{\sqrt2}G+2\sum_{i=1}^nD_iY_i',
\quad
D_i
\sim
\operatorname{Bernoulli}(1/8),
\end{align*}
where \((Y_i')_{i=1}^n\) is an independent copy of the entire family
\((Y_i)_{i=1}^n\), the variables \(D_1,\ldots,D_n\) are mutually
independent, and the four random elements
\[
G,\qquad (Y_i)_{i=1}^n,\qquad (Y_i')_{i=1}^n,\qquad
(D_i)_{i=1}^n
\]
are mutually independent. Direct calculation in Lemma~\ref{lem:path-moments} gives
\begin{align*}
\mathbb E[T]
=\mathbb E[S]=0,
\quad
\operatorname{Cov}(T)
=\operatorname{Cov}(S)=I_r,
\quad
\mathbb E[T^{\otimes3}]
=\mathbb E[S^{\otimes3}].
\end{align*}
Thus $T$ matches the first three moments of $S$ and contains the fixed Gaussian component $G/\sqrt2$. 
% Because the post-truncation envelope ${\color{black}b_{d,n}}$ may depend on $n$ and $d$, the Taylor expansion is carried to an even order $Q$ that is allowed to grow with $n$, which makes the terminal term negligible. 
We decompose the target difference as
\begin{align*}
\mathbb E\pi_{\beta,a_0,a}(S)-\mathbb E\pi_{\beta,a_0,a}(G)
&=
\underbrace{\{\mathbb E\pi_{\beta,a_0,a}(S)-\mathbb E\pi_{\beta,a_0,a}(T)\}}_{S \to T}
+
\underbrace{\{\mathbb E\pi_{\beta,a_0,a}(T)-\mathbb E\pi_{\beta,a_0,a}(G)\}}_{T \to G}.
\end{align*}
The first difference regarding $S\to T$ is bounded in Section~{\color{black}\ref{sec:S-to-T}}, and the second difference regarding $T\to G$ is bounded in Section~\ref{subsec:T-to-G}.

\subsection{Third moment preserving interpolation ($S\to T$)}
\label{sec:S-to-T}

\begin{proposition}
\label{prop:S-to-T}
There exist universal constants $C,C^*>0$ such that, for every even integer $Q\geq8$, every $\beta>0$ satisfying $C^*Q\beta {\color{black}b_{d,n}}\leq1/4$, and every $0<\tau<1$,
\begin{align*}
|\mathbb E\pi_{\beta,a_0,a}(S)-\mathbb E\pi_{\beta,a_0,a}(T)|
&\leq
C\left\{
n{\color{black}b_{d,n}}^4\beta^4\tau
+
{\color{black}b_{d,n}}^2\beta^2\log(4d)\log(1/\tau)
+
nQ^2(C^*Q\beta {\color{black}b_{d,n}})^Q
\right\}.
\end{align*}
\end{proposition}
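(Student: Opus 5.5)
A Lindeberg-type swapping argument, in which the summands of $S$ are replaced one at a time by the corresponding summands of $T$. Write $T=\sum_{i=1}^n Z_i$ with $Z_i=G_i/\sqrt2+2D_iY_i'$, where $G=\sum_i G_i$ and $G_i\sim N(0,I_r/n)$ are independent. The per-summand moment matching $\mathbb E Z_i=\mathbb E Y_i$, $\mathbb E Z_i^{\otimes2}=\mathbb E Y_i^{\otimes2}$ and $\mathbb E Z_i^{\otimes3}=\mathbb E Y_i^{\otimes3}$ is presumably what Lemma~\ref{lem:path-moments} verifies summandwise. Indeed, $\mathbb E(2D_i)^3=8/8=1$; the second moment of $Z_i$ is $\tfrac12\mathbb E Y_iY_i^\top+\tfrac12 I_r/n$, so to match exactly one needs the Gaussian part of covariance $\tfrac12\mathbb E[Y_iY_i^\top]$ rather than $I_r/(2n)$. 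I would define $G_i\sim N(0,\mathbb E Y_iY_i^\top)$, which is still allowed since $\sum_i\mathbb E Y_iY_i^\top=I_r$. Set the hybrids $H_i=\sum_{k<i}Z_k+\sum_{k>i}Y_k$. Then
\[
\mathbb E\pi(S)-\mathbb E\pi(T)=\sum_{i=1}^n\bigl\{\mathbb E\pi(H_i+Y_i)-\mathbb E\pi(H_i+Z_i)\bigr\}.
\]
Taylor expand $\pi$ around $H_i$ to order $Q-1$ with an integral remainder of order $Q$. By independence and moment matching, the orders $0$ to $3$ cancel. The derivatives of the softmax selector act only through the directions $u_j$: the $k$th directional derivative $\nabla^k\pi_0(y)[v^{\otimes k}]$ is bounded by $k!(C\beta)^k\max_j|\langle u_j,v\rangle|^k$, weighted by the selector probabilities $\pi_j(y)$. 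This is the standard CCK bound $\sum_{j_1..j_k}|\partial\pi|\le C_k\beta^k$, with $C_k\lesssim (Ck)^k$ via the cumulant structure of softmax. Because $|\langle u_j,Y_i\rangle|\le b_{d,n}$, and the Gaussian part $\langle u_j,G_i\rangle$ has variance at most $b_{d,n}^2$, the bound applies even though $\|Y_i\|_2$ is uncontrolled.

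\textbf{Order $Q$ remainder and orders $5..Q-1$.} The remainder is bounded by $nQ^2(C^*Q\beta b_{d,n})^Q$ directly from the derivative bound; the Gaussian moments contribute $Q^{Q/2}$-type factors that are absorbed into $C^*Q$. The terms of orders $5,\dots,Q-1$ form a geometric series dominated by the fourth-order term, using $C^*Q\beta b_{d,n}\le1/4$ (this restriction also keeps $\exp$-stability of $\pi$ under shifts of size $\beta b_{d,n}$). The key task is therefore the fourth-order term, $\sum_i\mathbb E\nabla^4\pi(H_i)[Y_i^{\otimes4}-Z_i^{\otimes4}]$.

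\textbf{Fourth-order term.} A naive bound gives $n\beta^4b_{d,n}^4$, too large. The plan is to split according to whether the selector is in its transition region. The weights $\pi_j(H_i)\pi_{j'}(H_i)\cdots$ produce a fourth derivative whose magnitude is controlled by $\beta^4\sum_{j}\pi_j(1-\pi_j)$-type quantities. With threshold $\tau$, one piece is the event that some nontrivial mixing occurs, i.e.\ $\max_j\pi_j\in(\tau,1-\tau)$, or the analogous condition on $\pi_0$. Using a leave-one-out ghost completion, replace $H_i$ by the full sum $H_i+\tilde Y_i$ with a ghost independent copy $\tilde Y_i$, at a cost of higher-order terms. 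Then sum $\mathbb E[\nabla^4\pi\,Y_i^{\otimes4}]$ over $i$, obtaining $\mathbb E\sum_i\langle u_j,Y_i\rangle^4\le b_{d,n}^2\sum_i\mathbb E\langle u_j,Y_i\rangle^2=b_{d,n}^2$. This is the second-moment normalization with $\|u_j\|=1$.

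\textbf{Convex-partition bound.} The region where the selector is non-saturated is controlled by a convex-partition bound. Partition $\mathbb R^r$ according to the argmax cell of $s_j$. The fourth derivative, summed over $i$ and integrated, behaves like $\beta^2\cdot\log(4d)\log(1/\tau)$ times $b_{d,n}^2$: the $\log(1/\tau)$ comes from integrating $\beta$-scaled softmax transitions across levels $\tau$ to $1-\tau$, and the $\log(4d)$ from the number of competing cells. The saturated regime costs at most $\tau$ per summand times the crude $\beta^4b_{d,n}^4$, giving $n b_{d,n}^4\beta^4\tau$.

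\textbf{Main obstacle.} I expect this fourth-order step, with exactly a $\beta^2 b_{d,n}^2$ rather than a $\beta^4$ dependence, to be hardest. I would first try writing $\nabla^2$ of $\log\sum e^{\beta s_\ell}$ and integrating by parts once along the direction $\sum_iY_i$, exploiting that $\sum_i\mathbb E Y_iY_i^\top=I_r$.
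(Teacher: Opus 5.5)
Your proposal breaks down at the step you flag as the main obstacle. The architecture you chose (sequential Lindeberg swaps, with $\tau$ read as a selector-saturation threshold) cannot supply the missing mechanism.

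\textbf{How the paper gets the bound.} In the paper, $\tau$ is a \emph{time} cutoff on the continuous path $S_s=\sqrt{s/2}\,G+(1-s/2)^{-1}\sum_iB_i(s)Y_i$, with $\mathbb P\{B_i(s)=1\}=(1-s/2)^3$. The thinning and rescaling keep the first three moments fixed while an \emph{isotropic} Gaussian component of covariance $(s/2)I_r$ grows. Consequently the derivative consists only of fourth-order remainders. On $[0,\tau]$ the crude bound gives $nb_{d,n}^4\beta^4\tau$. On $[\tau,1]$ every term is $O(b_{d,n}^2\beta^2\log(4d)/s)$, and $\log(1/\tau)=\int_\tau^1ds/s$.

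The key idea is exchanging $\beta^2$ for $1/s$:
\begin{itemize}
\item Every nonterminal term keeps a repeated direction pair $[Z_i,Z_i]$: the original $Y_i,Y_i$ or $Y_i',Y_i'$, or a ghost pair.
\item Two Gaussian integrations by parts turn this pair into the weight $(2/s)\operatorname{tr}\{(GG^\top-I_r)Z_iZ_i^\top\}$.
\item The softmax products become convex winner cells via the Gumbel-max representation.
\item The convex-cell lemma (Brascamp--Lieb $K_\ell\preceq I_r$ plus $\|\mu_\ell\|_2^2\le2\log(1/p_\ell)$) bounds the result by $4\max_\ell\|A_\ell\|_{\mathrm{op}}\log N$. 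Here $A_\ell$ is built from $\sum_i\mathbb E[Z_iZ_i^\top\cdots]$ and has operator norm $\lesssim b_{d,n}^{q-2}$ because $\sum_i\mathbb E Y_iY_i^\top=I_r$.
\end{itemize}

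\textbf{Why your setup cannot do this.} Nothing in your plan plays this role, and the setup actively blocks it.
\begin{itemize}
\item The hybrid $H_i=\sum_{k<i}Z_k+\sum_{k>i}Y_k$ has Gaussian part with covariance $\tfrac12\sum_{k<i}\mathbb EY_kY_k^\top$. This is anisotropic, possibly singular, and negligible for small $i$. Integrating by parts in the direction $Y_i$ would cost $Y_i^\top\Sigma_{<i}^{+}Y_i$, which is uncontrolled. ``Integrating by parts along $\sum_iY_i$'' has no density to work against.
\item Splitting off the non-saturated selector region does not help. Your bound $\sum_i\mathbb E\langle u_j,Y_i\rangle^4\le b_{d,n}^2$ gives $\beta^4b_{d,n}^2$ times the mass of that region. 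Even Gaussian anticoncentration at width $\log(1/\tau)/\beta$, which is unavailable for the non-Gaussian hybrid, would only reduce this to order $\beta^3b_{d,n}^2$. That is not small at the final choice $\beta\approx b_{d,n}^{-2/3}$.
\item The laws of $H_i+\widetilde Y_i$ differ across $i$, so ghost completion does not give one common state. You therefore cannot aggregate the matrix weights over $i$ before taking operator norms. Summing per-row norms can cost $\sum_i\operatorname{tr}\mathbb E Y_iY_i^\top=r$, since $\|Y_i\|_2$ is uncontrolled. In the paper, $R_{i,s}+\eta_i^{(0)}$ has the law of $S_s$ for every $i$, which is what makes the aggregation legitimate.
\item Orders $5,\dots,Q-1$ are not dominated by the fourth-order term under crude bounds, since $n\beta^qb_{d,n}^q$ is not small. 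Each order needs the same repeated-pair argument, which is where the factors $q^5(C^*q\beta b_{d,n})^{q-4}$ come from.
\end{itemize}

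A minor point: with Gaussian summands $G_i$ inside your Taylor remainder, $\max_j|\langle u_j,G_i\rangle|$ is of order $b_{d,n}\sqrt{\log d}$. That would introduce $\log d$ factors into the terminal term, which the stated bound does not contain.
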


\begin{proof}[Proof of Proposition~\ref{prop:S-to-T}]
Let \(U_1,\ldots,U_n\) be independent
\(\operatorname{Uniform}(0,1)\) random variables, jointly independent
of \(G\), \((Y_i)_{i=1}^n\), and \((Y_i')_{i=1}^n\). For
\(0\leq s\leq1\), define $B_i(s)
=
\mathbf 1\!\left\{
U_i\leq(1-s/2)^3
\right\}$, for $i=1,\ldots,n$, and define
\begin{align*}
S_s
&=
\sqrt{\frac{s}{2}}G
+
\frac{1}{1-s/2}\sum_{i=1}^nB_i(s)Y_i.
\end{align*}
Then $S_0=S$ and $S_1\stackrel d=T$.

For every $s\in[0,1]$, Lemma~\ref{lem:path-moments} gives
\begin{align*}
\mathbb{E}[S_s]
=0,
\quad
\operatorname{Cov}(S_s)
=I_r,
\quad
\mathbb E[S_s^{\otimes3}]
=\mathbb E[S^{\otimes3}].
\end{align*}
Because the moments through order three are constant along the path, Lemma~\ref{lem:exact-Bernoulli-generator} gives
\begin{align}
&\frac{d}{ds}\mathbb E\pi_{\beta,a_0,a}(S_s)
={}
\frac{(1-s/2)^2}{4}\sum_{i=1}^n\mathbb E\int_0^1t(1-t)^2 \underbrace{D^4\pi_{\beta,a_0,a}\left(R_{i,s}+t\frac{Y_i}{1-s/2}\right)\left[\left(\frac{Y_i}{1-s/2}\right)^{\otimes4}\right]}_{:=f_{i,t}^{(1)}(R_{i,s})}dt
\nonumber\\
&+
\frac{(1-s/2)^5}{4}\sum_{i=1}^n\mathbb E\int_0^1(1-t)\underbrace{D^4\pi_{\beta,a_0,a}\left(R_{i,s}+t\frac{Y_i}{1-s/2}\right)
\left[\frac{Y_i}{1-s/2},\frac{Y_i}{1-s/2},\frac{Y_i'}{1-s/2},\frac{Y_i'}{1-s/2}\right]}_{:=f_{i,t}^{(2)}(R_{i,s})}dt,
\label{eq:main-Bernoulli-generator}
\end{align}
where $
R_{i,s}
:=
\sqrt{\frac{s}{2}}G
+
\frac{1}{1-s/2}\sum_{\ell\neq i}B_\ell(s)Y_\ell$.

For $s\in[0,\tau]$, the exact selector derivative bound in Lemma~\ref{lem:selector-derivatives} gives
\begin{align*}
|D^4\pi_{\beta,a_0,a}(y)[z_1,z_2,z_3,z_4]|
&\leq
C\beta^4\prod_{\ell=1}^4\max_{j\leq2d}|\langle u_j,z_\ell\rangle|.
\end{align*}
Since $1-s/2\geq1/2$ and $\max_j|\langle u_j,Y_i\rangle|\leq {\color{black}b_{d,n}}$, integrating Eq.~\eqref{eq:main-Bernoulli-generator} over $[0,\tau]$ yields
\begin{align}
\left|\int_0^\tau\frac{d}{ds}\mathbb E\pi_{\beta,a_0,a}(S_s)ds\right|
&\leq
Cn{\color{black}b_{d,n}}^4\beta^4\tau.
\label{eq:small-s-main-bound}
\end{align}

We next deal with $s\in(\tau,1]$. 
Note that $R_{i,s}$ in Eq.~\eqref{eq:main-Bernoulli-generator} is observation-specific. We add an independent ghost copy of the omitted increment to $R_{i,s}$ to recover a common full sum law. Explicitly, let $\eta_i=B_i(s)Y_i/(1-s/2)$, $\eta_i^{(0)}$ be an independent copy of $\eta_i$, and define $H_i:=R_{i,s}+\eta_i^{(0)}$.
Conditional on the active directions \(Y_i,Y_i'\), let \(P_i\)
denote convolution in the spatial variable by the law of the
independent ghost increment \(\eta_i^{(0)}\). Thus \(P_i\) leaves
\(Y_i,Y_i'\) fixed. Lemma~\ref{lem:finite-ghost-completion} gives 
\begin{align}
\mathbb E\left[
f_{i,t}^{(\ell)}(R_{i,s};Y_i,Y_i')
\right]
&=
\sum_{k=0}^{(Q-4)/2-1}
\mathbb E\left[
\bigl[(I-P_i)^k f_{i,t}^{(\ell)}\bigr]
(H_i;Y_i,Y_i')
\right]
\\
&\quad+
\mathbb E\left[
\bigl[(I-P_i)^{(Q-4)/2} f_{i,t}^{(\ell)}\bigr]
(R_{i,s};Y_i,Y_i')
\right].
\end{align}
Using Definition~\ref{def:common-terminal-contributions}, the exact
ghost and Taylor decomposition of the generator is
\begin{align}
\frac{d}{ds}
\mathbb E\pi_{\beta,a_0,a}(S_s)
&=
\mathcal N_Q(s)+\mathcal T_Q(s),
\qquad s\in[\tau,1].
\label{eq:NQ-TQ-decomposition}
\end{align}
The quantity \(\mathcal N_Q(s)\) contains all explicit Taylor terms
of derivative order below \(Q\), while \(\mathcal T_Q(s)\) contains
the exact order-\(Q\) Taylor remainders and the terminal ghost
remainders. Here $H_i$ has the common full-sum law
% \begin{align*}
% \sqrt{\frac{s}{2}}G+\sum_{j=1}^n\frac{B_j(s)Y_j}{a_s}
% \end{align*}
and is independent of $Y_i/(1-s/2)$, $Y'_i/(1-s/2)$ and all further ghost copies used by $(I-P_i)^k$. Therefore, $H_i$ may be recoupled as one independent common state before summing over $i$.

Every common-base term retains a repeated direction pair. Conditional on the non-Gaussian part of the common state, two Gaussian integrations by parts in that pair give, for $q<Q$,
\begin{align}
\mathbb E_G\left[D^q\pi_{\beta,a_0,a}\left(H+\sqrt{\frac{s}{2}}G\right)[z,z,z_3,\ldots,z_q]\right]
=&
\frac{2}{s} \mathbb E_G\left[\{(G^\top z)^2-\|z\|_2^2\}
\nonumber\right.\\
&\times\left.
D^{q-2}\pi_{\beta,a_0,a}\left(H+\sqrt{\frac{s}{2}}G\right)[z_3,\ldots,z_q]\right].
\label{eq:two-IBP-main}
\end{align}
The selector derivative formula in
Lemma~\ref{lem:selector-derivatives} and the Gumbel representation in
Lemma~\ref{lem:Gumbel-cells} then convert the remaining softmax
products into probabilities of convex winner cells. Applying the convex cell bound in
Lemma~\ref{lem:convex-cell-contraction} {\color{black}gives} the upper bound of the total contribution of the derivative of order $q < Q$ 
\begin{align}
\frac{C{\color{black}b_{d,n}}^2\beta^2\log(4d)}{s}
q^5(C^*q\beta {\color{black}b_{d,n}})^{q-4}.
\label{eq:order-q-common-main}
\end{align}
Lemmas~\ref{lem:nonterminal-common-base}
and~\ref{lem:terminal-order-Q} give
\begin{align*}
|\mathcal N_Q(s)|
&\leq
\frac{
C b_{d,n}^2\beta^2\log(4d)
}{s},
\\
|\mathcal T_Q(s)|
&\leq
C nQ^2(C^*Q\beta b_{d,n})^Q.
\end{align*}
Therefore, by
Eq.~\eqref{eq:NQ-TQ-decomposition},
\begin{align*}
\left|
\frac{d}{ds}
\mathbb E\pi_{\beta,a_0,a}(S_s)
\right|
&\leq
\frac{
C b_{d,n}^2\beta^2\log(4d)
}{s}
+
C nQ^2(C^*Q\beta b_{d,n})^Q.
\end{align*}
Therefore, integrating over $[\tau,1]$ gives 
\begin{align}
\int_\tau^1\left|\frac{d}{ds}\mathbb E\pi_{\beta,a_0,a}(S_s)\right|ds
&\leq
C{\color{black}b_{d,n}}^2\beta^2\log(4d)\log(1/\tau)
+
CnQ^2(C^*Q\beta {\color{black}b_{d,n}})^Q.
\label{eq:large-s-main-bound}
\end{align}
Combining Eq.~\eqref{eq:small-s-main-bound} and Eq.~\eqref{eq:large-s-main-bound} proves the proposition.
\end{proof}

\subsection{Gaussian replacement interpolation ($T\to G$)}
\label{subsec:T-to-G}

\begin{proposition}
\label{prop:T-to-G}
There exist universal constants $C,C^*>0$ such that, for every even integer $Q\geq8$ and every $\beta>0$ satisfying $C^*Q\beta {\color{black}b_{d,n}}\leq1/4$,
\begin{align*}
|\mathbb E\pi_{\beta,a_0,a}(T)-\mathbb E\pi_{\beta,a_0,a}(G)|
&\leq
C\left\{
{\color{black}b_{d,n}}\{\log(4d)\}^{3/2}
+
{\color{black}b_{d,n}}^2\beta^2\log(4d)
+
nQ^2(C^*Q\beta {\color{black}b_{d,n}})^Q
\right\}.
\end{align*}
\end{proposition}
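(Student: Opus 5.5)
We interpolate from $T$ to $G$ along a path that keeps the Gaussian component $G/\sqrt2$ fixed throughout. Write $T=\frac{1}{\sqrt2}G_1+V$ with $V=2\sum_{i=1}^nD_iY_i'$, and $G\stackrel d=\frac{1}{\sqrt2}G_1+\frac{1}{\sqrt2}G_2$, where $G_1,G_2$ are independent $N(0,I_r)$. We check that $\operatorname{Cov}(V)=4\cdot\frac18 I_r=\frac12 I_r$. Now replace $V$ by $G_2/\sqrt2$ through a Bernoulli-thinning path analogous to $S_s$ in Proposition~\ref{prop:S-to-T}:
\begin{align*}
T_s=\frac{1}{\sqrt2}G_1+\sqrt{\frac{s}{2}}\,G_2+\frac{1}{1-s/2}\sum_{i=1}^n B_i(s)\,2D_iY_i'.
\end{align*}
The thinning law is chosen so that the covariance stays $I_r$; only the first two moments are matched along this path. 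Applying Lemma~\ref{lem:exact-Bernoulli-generator} to the increments $\xi_i=2D_iY_i'$ then gives a generator with a surviving third-order term. That term has the form $\sum_i\mathbb E\,D^3\pi_{\beta,a_0,a}(R_{i,s})[\xi_i^{\otimes3}]$ up to bounded weights, plus fourth-order Taylor terms. Crucially, every base point $R_{i,s}$ contains the \emph{nondegenerate} Gaussian $G_1/\sqrt2$, uniformly in $s\in[0,1]$, so no $1/s$ singularity and no $\tau$-split is needed.

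\textbf{Fourth and higher order terms.} These are handled exactly as in the proof of Proposition~\ref{prop:S-to-T}, with the Gaussian $G_1/\sqrt2$ (variance $\geq 1/2$) in place of $\sqrt{s/2}\,G$. We use ghost completion (Lemma~\ref{lem:finite-ghost-completion}) to pass to the common full-sum base $H_i$. Two Gaussian integrations by parts in a repeated direction pair, as in \eqref{eq:two-IBP-main} but now with factor $2$ instead of $2/s$, remove two derivatives. The Gumbel cell representation (Lemma~\ref{lem:Gumbel-cells}) and convex cell contraction (Lemma~\ref{lem:convex-cell-contraction}) then bound the order-$q$ contribution by $Cb_{d,n}^2\beta^2\log(4d)\,q^5(C^*q\beta b_{d,n})^{q-4}$. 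This sums to $Cb_{d,n}^2\beta^2\log(4d)$ under $C^*Q\beta b_{d,n}\leq 1/4$. The terminal order-$Q$ remainders give $CnQ^2(C^*Q\beta b_{d,n})^Q$ as in Lemma~\ref{lem:terminal-order-Q}. Since $D_i$ contributes only a factor $\mathbb P(D_i=1)=1/8$ and $|\langle u_j,2D_iY_i'\rangle|\leq 2b_{d,n}$, all constants are universal. Integrating over $s\in[0,1]$ yields the last two terms in the proposition.

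\textbf{Third-order term (main obstacle).} After ghost completion the third-order term becomes
\[
\sum_i\mathbb E\bigl[D^3\pi(H+G_1/\sqrt2)[\xi_i^{\otimes3}]\bigr],
\]
which equals $\mathbb E\bigl\langle D^3\pi(H+G_1/\sqrt2),\,M\bigr\rangle$ with $M=\sum_i\mathbb E\xi_i^{\otimes3}$; ghost corrections fold into the higher-order bounds. We cannot use Taylor cancellation here, so we must bound this term at rate $b_{d,n}\{\log(4d)\}^{3/2}$ \emph{without} a $\beta$ factor. The plan is to integrate by parts once in the Gaussian $G_1$, reducing $D^3\pi$ to $D^2\pi$ paired with a matrix weight $A_k=\sum_i\mathbb E[\langle\xi_i,e_k\rangle\,\xi_i\xi_i^\top]$ contracted against $G_1$. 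Then write $\pi=\pi_0$ as a smoothed indicator of the polyhedron $P_a$. Using the bound $|\langle u_j,\xi_i\rangle|\leq 2b_{d,n}$ and $\sum_i\mathbb E\xi_i\xi_i^\top\preceq I_r$, the contraction $M[u_j,u_k,u_l]$ is bounded by $2b_{d,n}$ times a covariance-type quantity. The remaining second-derivative pairing would be controlled by the rank-free matrix-weighted Gaussian surface bound: for a polyhedron with $2d$ facets and a weight matrix $0\preceq A\preceq I$,
\[
\int_{\partial P}\langle An,n\rangle\,d\gamma_r\lesssim \sqrt{\log d},
\]
with no dependence on $r$. Together with a Gaussian Hermite moment factor of order $\log(4d)$ from the maximum over $2d$ directions, this gives $b_{d,n}\{\log(4d)\}^{3/2}$. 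The delicate point, where we expect most of the work, is proving this surface bound uniformly over $\beta$ so that no power of $\beta$ appears. We would first try a Nazarov-type argument applied facet-by-facet, conditionally on the weighted direction.
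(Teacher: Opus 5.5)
Your treatment of the fourth- and higher-order terms is essentially the paper's Lemma~\ref{lem:T-to-G-fourth-order}: ghost completion, two Gaussian integrations by parts in a repeated pair against a Gaussian of variance at least $1/2$, convex winner cells, and terminal order-$Q$ remainders. Reducing the surviving third-order term to $\mathbb E\langle D^3\pi_{\beta,a_0,a}(T_s),M\rangle$, with $M=\sum_i\mathbb E[Y_i^{\otimes3}]$ at a common base point, is also what the paper does. The gap is the third-order bound, which is the real content of the proposition, and your plan for it aims at the wrong inequality.

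After one Gaussian integration by parts you still carry $D^2\pi_{\beta,a_0,a}$, which is of size $\beta^2$. Viewing $\pi_{\beta,a_0,a}$ as a smoothed $\mathbf 1_{P_a}$ does not let you replace its derivatives by those of the indicator. What removes $\beta$ is moving all three derivatives onto the Gaussian, $\mathbb E_G[\partial_{a_1a_2a_3}\pi_{\beta,a_0,a}(H+\sigma G)]=\sigma^{-3}\mathbb E_G[H_{3,a_1a_2a_3}(G)\pi_{\beta,a_0,a}(H+\sigma G)]$. You then write $\pi_0$ \emph{exactly}, via Gumbel-max, as an average of indicators of polyhedra whose normals lie among the $u_j$. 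Gauss--Green converts the term into $-\sum_j\int_{F_j}\operatorname{tr}\{(xx^\top-I_r)B_j\}\varphi_r\,d\mathcal H^{r-1}$ with $B_j=\sum_i\mathbb E[\langle u_j,Y_i\rangle Y_iY_i^\top]$. This $B_j$ is symmetric, generally indefinite and possibly of full rank, and only $\|B_j\|_{\mathrm{op}}\le b_{d,n}$ is known about it. At that point uniformity in $\beta$ is automatic, since only the random thresholds depend on $\beta$. The real difficulty is uniformity in $r$.

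Your proposed surface inequality does not address this. For $0\preceq A\preceq I$, the bound $\int_{\partial P}\langle An,n\rangle\,d\gamma_r\lesssim\sqrt{\log d}$ follows at once from Nazarov's Gaussian surface-area bound, since $\langle An,n\rangle\le1$. It sees only the normal--normal entry of the weight. The actual integrand also contains the tangential Hermite block. Writing $x=a_ju_j+y$, facet $F_j$ contributes $s_j\operatorname{tr}\{C_j(K_j-I_{E_j}+\mu_j\mu_j^\top)\}$, where $C_j$ is the tangential block of $B_j$ and $\mu_j,K_j$ are the conditional mean and covariance of the Gaussian on the facet section. Only $\|C_j\|_{\mathrm{op}}$ is controlled, so you need a nuclear-norm bound: $K_j\preceq I_{E_j}$ together with $\sum_js_j\operatorname{tr}(I_{E_j}-K_j)\lesssim\{\log(2m_P)\}^{3/2}$. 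None of your suggested tools gives this:
\begin{itemize}
\item Nazarov applied facet by facet bounds only $\sum_js_j$.
\item Splitting $B_j$ into rank-one pieces $\langle u_j,Y_i\rangle Y_iY_i^\top$ and conditioning on a direction costs $\sum_i\mathbb E\|Y_i\|_2^2=r$ instead of $\|\sum_i\mathbb E Y_iY_i^\top\|_{\mathrm{op}}=1$.
\item There is no dimension-free ``Hermite moment factor $\log(4d)$'': crude moments of $\operatorname{tr}\{H_2(G)B_j\}$ scale with $\|B_j\|_{\mathrm{HS}}$, which can be of order $b_{d,n}\sqrt r$.
\end{itemize}
The paper supplies this rank-free step (Proposition~\ref{prop:matrix-surface}) by applying Gauss--Green inside each facet section. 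That rewrites $\sum_js_j\operatorname{tr}(I_{E_j}-K_j)$ as a signed sum over codimension-two ridges (Lemma~\ref{lem:ridge-identity}). The ridge terms are then bounded using disjointness of normal-cone strata and a two-dimensional cone-mass lower bound (Lemmas~\ref{lem:normal-strata}, \ref{lem:local-cone-mass}, \ref{lem:ridge-budget}). The $\{\log\}^{3/2}$ comes from $\sum_js_j\{1+a_j^2+\|\mu_j\|_2^2+\operatorname{tr}(I_{E_j}-K_j)\}$.

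A minor further point concerns your path. Lemma~\ref{lem:exact-Bernoulli-generator} is tied to the thinning probability $(1-s/2)^3$. Under that thinning your $T_s$ has covariance $(1+s/4)I_r$ and does not reach $G$ at $s=1$. With a covariance-preserving law such as $(1-s)(1-s/2)^2$ you must rederive the generator, or you can use the paper's deterministic path $G/\sqrt2+2\sqrt{1-s}\sum_iD_iY_i+\sqrt{s/2}\,G_1$.
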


\begin{proof}[Proof of Proposition~\ref{prop:T-to-G}]

For $0\leq s\leq1$, define
\begin{align*}
T_s
&=
\frac{1}{\sqrt2}G
+
2\sqrt{1-s}\sum_{i=1}^nD_iY_i
+
\sqrt{\frac{s}{2}}G_1,
\end{align*}
where $G,G_1\sim N(0,I_r)$ are independent of all other variables. Then $T_0\stackrel d=T$ and $T_1\stackrel d=G$.
For each $i$, define
$
R_{i,s}
=
\frac{1}{\sqrt2}G
+
2\sqrt{1-s}\sum_{\ell\neq i}D_\ell Y_\ell
+
\sqrt{\frac{s}{2}}G_1$.
Let $\mathcal F_{\mathrm{base}}
:=
\sigma\!\left(
G,G_1,(D_j,Y_j)_{1\leq j\leq n}
\right)$. On an enlarged probability space, choose the collection $\left\{
Y_i',\widetilde Y_i,D_i',\widetilde D_i
:
1\leq i\leq n
\right\}$ to be jointly independent and independent of
\(\mathcal F_{\mathrm{base}}\), with $\mathcal L(Y_i')
=
\mathcal L(\widetilde Y_i)
=
\mathcal L(Y_i)$ and $\mathcal L(D_i')
=
\mathcal L(\widetilde D_i)
=
\mathcal L(D_i)$ for every \(1\leq i\leq n\). Lemma~\ref{lem:Gaussian-replacement-generator} gives
\begin{align}
&\frac{d}{ds}\mathbb E\pi_{\beta,a_0,a}(T_s)
={}
-\frac{\sqrt{1-s}}{4}
\sum_{a_1,a_2,a_3=1}^r
\left\{\sum_{i=1}^n\mathbb E[Y_{ia_1}Y_{ia_2}Y_{ia_3}]\right\}
\mathbb E[\partial_{a_1a_2a_3}\pi_{\beta,a_0,a}(T_s)]
\nonumber\\
&+
\frac{1-s}{2}\sum_{i=1}^n\mathbb E\int_0^1(1-t)D^4\pi_{\beta,a_0,a}\left(R_{i,s}+2t\sqrt{1-s}D_iY_i\right)
\times[2D_iY_i,2D_iY_i,2D_i'Y_i',2D_i'Y_i']dt
\nonumber\\
&+
\frac{1-s}{4}\sum_{i=1}^n\mathbb E\int_0^1D^4\pi_{\beta,a_0,a}\left(R_{i,s}+2t\sqrt{1-s}D_iY_i\right)\times[2D_iY_i,2\widetilde D_i\widetilde Y_i,2\widetilde D_i\widetilde Y_i,2\widetilde D_i\widetilde Y_i]dt
\nonumber\\
&-
\frac{1-s}{4}\sum_{i=1}^n\mathbb E\int_0^1(1-t)^2D^4\pi_{\beta,a_0,a}\left(R_{i,s}+2t\sqrt{1-s}D_iY_i\right)
\times[(2D_iY_i)^{\otimes4}]dt.
\label{eq:main-Gaussian-generator}
\end{align}
We bound the third-order and fourth-order terms separately.

For the third-order term, let $\sigma_s^2=(1+s)/2$. 
Combining the Gaussian terms in $T_s$, applying third-order Gaussian integration by parts, and then applying the Gauss-Green formula gives
\begin{align}
&\sum_{a_1,a_2,a_3=1}^r
\left\{\sum_{i=1}^n\mathbb E[Y_{ia_1}Y_{ia_2}Y_{ia_3}]\right\}
\mathbb E[\partial_{a_1a_2a_3}\pi_{\beta,a_0,a}(T_s)]\\
&=
-\left(\frac{2}{(1+s)}\right)^{3/2}\mathbb E\sum_{j=1}^{m_P}\int_{F_j}\operatorname{tr}\{(xx^\top-I_r)B_j\}\varphi_r(x)d\mathcal H^{r-1}(x),
\label{eq:main-H3-surface}
\end{align}
where the polyhedron $P$ and its facets $F_j$ are defined in Appendix~\ref{sec:additional-notation}, and the weighted matrix $B_j$ is defined in Lemma~\ref{lem:skewness-operator} and satisfies 
\begin{align}
\max_{1\leq j\leq2d}\|B_j\|_{\mathrm{op}}
&\leq {\color{black}b_{d,n}}.
\label{eq:main-skewness-op}
\end{align}
Proposition~\ref{prop:matrix-surface}, Eq.~\eqref{eq:main-skewness-op}, and $(1+s)/2\geq1/2$ yield
\begin{align}
\left|
\frac{\sqrt{1-s}}{4}
\sum_{a_1,a_2,a_3=1}^r
\left\{\sum_{i=1}^n\mathbb E[Y_{ia_1}Y_{ia_2}Y_{ia_3}]\right\}
\mathbb E[\partial_{a_1a_2a_3}\pi_{\beta,a_0,a}(T_s)]
\right|
&\leq
C{\color{black}b_{d,n}}\{\log(4d)\}^{3/2}.
\label{eq:main-third-order-bound}
\end{align}
Lemma~\ref{lem:third-order-skewness} gives the complete justification of {\color{black}Eqs.}~\eqref{eq:main-H3-surface} {\color{black}and}~\eqref{eq:main-third-order-bound}.

For fourth-order terms, the independent Gaussian component of $R_{i,s}$ has covariance $(1+s)I_r/2\succeq I_r/2$. Hence the repeated-pair integration-by-parts estimate is uniform over $s\in[0,1]$. After $k$ ghost differences and a Taylor term of degree $h$, a nonterminal term has total derivative order $q=4+2k+h<Q$ and retains a repeated pair. Lemma~\ref{lem:T-to-G-fourth-order} {\color{black}bounds the absolute value of the contribution of the fourth-order terms by}
\begin{align*}
C{\color{black}b_{d,n}}^2\beta^2\log(4d)
\sum_{q=4}^{Q-1}q^5(C^*q\beta {\color{black}b_{d,n}})^{q-4}
+
CnQ^2(C^*Q\beta {\color{black}b_{d,n}})^Q
\leq
C{\color{black}b_{d,n}}^2\beta^2\log(4d)
+
CnQ^2(C^*Q\beta {\color{black}b_{d,n}})^Q.
\end{align*}
Combined with Eq.~\eqref{eq:main-third-order-bound}, we integrate Eq.~\eqref{eq:main-Gaussian-generator} over $s\in[0,1]$ and finish the proof.
\end{proof}

\section{Final hyperparameter selection}\label{sec:summary}

We assemble the results of Section~\ref{sec:preparation}, Section~\ref{sec:smooth-selector},  Section~\ref{sec:interpolation}, and choose the hyperparameters to complete the proof of Theorem~\ref{thm:subexponential-gaussian-approximation}.

Consider $b_{d,n}\leq1/4$ (if $b_{d,n}>1/4$, the desired conclusion follows from the trivial bound after increasing the constant), and we choose \(\delta=b_{d,n}\) in Proposition~\ref{prop:probability-comparison-through-selectors}.
Propositions~\ref{prop:S-to-T} and~\ref{prop:T-to-G},
Lemma~\ref{lem:conditional-nazarov}, 
Lemma~\ref{lem:preprocessing-transfer-main}, and a further quantile-transfer result in Section~\ref{appe:sec:final} then give us the upper bound
\begin{align}
\sup_{t\in\mathbb R}
\left|
\mathbb P\{T_n(a)\leq t\}
-
\mathbb P\{T_n^G(a)\leq t\}
\right|
\le
C_b\Bigg\{
&b_{d,n}
+
\frac{
\{1+\log_+(16d/b_{d,n})\}\sqrt{\log(4d)}
}{\beta}
+
nb_{d,n}^4\beta^4\tau
\nonumber\\
&+
b_{d,n}^2\beta^2\log(4d)\log(1/\tau)
+
b_{d,n}\{\log(4d)\}^{3/2}
\nonumber\\
&+
nQ^2({\color{black}C^*}Q\beta b_{d,n})^Q
+
n^{-20}
+
n^{-4}
\Bigg\}.
{\color{black}\label{eq:overall-bound-original-parameters-main}}
\end{align}
We next select hyperparameters
\begin{align}
\beta
&=
b_{d,n}^{-2/3}\{\log(4dn)\}^{-1/6},
\quad
\tau
=
\frac{
\{1+\log_+(16d{\color{black}n}/b_{d,n})\}\{\log(4dn)\}^{4/3}
}{
nb_{d,n}^{2/3}
},
{\color{black}\label{eq:corrected-parameter-choice-main}}
\end{align}
and \(Q=2\lceil C\log(2n)\rceil\), where \(C>0\) is a sufficiently large universal constant. 
These choices satisfy \(0<\tau<1\) and \({\color{black}C^*}Q\beta b_{d,n}\leq1/16\), and hence
$
nQ^2({\color{black}C^*}Q\beta b_{d,n})^Q
\leq
nQ^2 16^{-Q}
\leq
n^{-4}$.
Substituting these choices into
Eq.~{\color{black}\eqref{eq:overall-bound-original-parameters-main}} yields the bound in
Theorem~\ref{thm:subexponential-gaussian-approximation}. More details of the parameter selection are given in Section~{\color{black}\ref{appe:sec:final}} of the appendix.

\section{Discussion}
\label{sec:discussion}

There are two main directions for future work. First, the present result only provides an upper bound. Establishing the optimality of this rate would require a matching lower bound construction.
Second, it is of interest to extend the result to bootstrap approximation and improve the rate $n^{-1/4}$ {\color{black}of \citet{CCKK2022}}. Such an extension introduces the additional difficulty of a random calibration functional constructed from the same sample: the observed data determine both the test statistic and the bootstrap critical value.

% \section*{Acknowledgments}

% The initial proof attempt was generated by ChatGPT 5.6 Pro (OpenAI) and subsequently rewritten by Weihan Zhang and Zijun Gao.

\bibliographystyle{plainnat}
\bibliography{ref}

\appendix

\section{Additional notation}
\label{sec:additional-notation}

Let $d_{\mathrm{TV}}\{\mathbb P,\mathbb Q\}$ denote the total variation difference between $\mathbb P$ and $\mathbb Q$.
Here \(C_b^4(\mathbb R^r)\) denotes the class of functions
\(f:\mathbb R^r\to\mathbb R\) that are four times continuously
Fr\'echet differentiable and satisfy
\begin{align*}
\max_{0\leq k\leq4}
\sup_{x\in\mathbb R^r}
\|D^kf(x)\|_{\mathrm{op}}
&<
\infty,
\end{align*}
where \(D^0f=f\) and, for \(k\geq1\),
\begin{align*}
\|D^kf(x)\|_{\mathrm{op}}
&=
\sup_{\|z_1\|_2,\ldots,\|z_k\|_2\leq1}
\left|D^kf(x)[z_1,\ldots,z_k]\right|.
\end{align*}

For $a=(a_1,\ldots,a_{2d})\in\mathbb R^{2d}$, define
\begin{align*}
M_a(y)
=
\max_{1\leq j\leq2d}\{\langle u_j,y\rangle-a_j\},
\quad
P_a
=
\{y\in\mathbb R^r:M_a(y)\leq0\}.
\end{align*}
For the preprocessing step, write
\begin{align*}
\mu_n
&=
\frac{1}{\sqrt n}\sum_{i=1}^n\mathbb E[X_{i,A_i}].
\end{align*}
For an integer $k\geq0$, let $\mathfrak P_k$ be the set of partitions of $\{1,\ldots,k\}$, with $\mathfrak P_0$ containing the empty partition. For a sufficiently differentiable function $f$, $D^kf(x)[z_1,\ldots,z_k]$ denotes the $k$th Fr\'echet derivative evaluated in the displayed directions, $z^{\otimes k}$ is the $k$-fold tensor product, and $\Delta f=\operatorname{tr}(\nabla^2f)$. Let
\begin{align*}
H_2(x)
=
xx^\top-I_r,
\quad
H_{3,a_1a_2a_3}(x)
=
x_{a_1}x_{a_2}x_{a_3}-\delta_{a_1a_2}x_{a_3}-\delta_{a_1a_3}x_{a_2}-\delta_{a_2a_3}x_{a_1},
\end{align*}
where $\delta_{ab}=\mathbf1\{a=b\}$. 

\begin{definition}
\label{def:polyhedral-surface-configuration}
Let \(r,m_P\) be positive integers. A polyhedral surface
configuration consists of unit vectors
\(u_1,\ldots,u_{m_P}\in\mathbb R^r\) and thresholds
\(a_1,\ldots,a_{m_P}\in\mathbb R\) such that
\[
P
=
\bigcap_{j=1}^{m_P}
\left\{
x\in\mathbb R^r:
\langle u_j,x\rangle\leq a_j
\right\}
\]
is nonempty, proper, and full-dimensional, and the displayed
halfspace representation is irredundant.
\end{definition}

The following notation is used for every polyhedral surface
configuration throughout Appendix~\ref{sec:surface-appendix}. Its
facets are
\[
F_j
=
P\cap
\left\{
x\in\mathbb R^r:
\langle u_j,x\rangle=a_j
\right\},
\qquad j=1,\ldots,m_P.
\] For each $j$, let $E_j=u_j^\perp$ and
\begin{align*}
Q_j
=
\{y\in E_j:a_ju_j+y\in F_j\},
\quad
p_j
=
\gamma_{E_j}(Q_j),
\quad
s_j
=
\varphi_1(a_j)p_j,
\end{align*}
and let $\mu_j$ and $K_j$ be the conditional mean and covariance of a standard Gaussian on $E_j$ given $Q_j$. Put $\Delta_j=\operatorname{tr}(I_{E_j}-K_j)$. Relative to $\operatorname{span}(u_j)\oplus E_j$, write each symmetric matrix weight as
\begin{align*}
B_j
&=
\begin{pmatrix}
\alpha_j&v_j^\top\\
v_j&C_j
\end{pmatrix}.
\end{align*}

For a codimension-two ridge $F_{jk}=F_j\cap F_k$, let $\alpha_{jk}\in(0,\pi)$ be the angle between $u_j$ and $u_k$, let $N_{jk}=\operatorname{span}(u_j,u_k)$ and $H_{jk}=N_{jk}^\perp$, and let $z_{jk}\in N_{jk}$ satisfy $\langle u_j,z_{jk}\rangle=a_j$ and $\langle u_k,z_{jk}\rangle=a_k$. Define
\begin{align*}
C_{jk}
=
\operatorname{cone}(u_j,u_k),
\quad
b_{jk}
=
\frac{u_j+u_k}{2\cos(\alpha_{jk}/2)},
\quad
\rho_{jk}
=
\int_{F_{jk}}\varphi_r(x)d\mathcal H^{r-2}(x),
\end{align*}
\begin{align*}
\mathcal N_{jk}
=
\operatorname{relint}(F_{jk})+\operatorname{relint}(C_{jk}),
\quad
q_{jk}
=
\gamma_r(\mathcal N_{jk}),
\quad\quad
T_{jk}
=
\{t\in H_{jk}:z_{jk}+t\in\operatorname{relint}(F_{jk})\},
\end{align*}
and $p_{jk}=\gamma_{H_{jk}}(T_{jk})$. For the ridge induced inside the facet section $Q_j$, define
\begin{align*}
n_{k\mid j}
=
\frac{u_k-\langle u_j,u_k\rangle u_j}{\sqrt{1-\langle u_j,u_k\rangle^2}},
\quad
d_{k\mid j}
=
\frac{a_k-\langle u_j,u_k\rangle a_j}{\sqrt{1-\langle u_j,u_k\rangle^2}},
\end{align*}
and let $r_{jk}^{(j)}$ be the Gaussian surface mass of $\{y\in Q_j:\langle n_{k\mid j},y\rangle=d_{k\mid j}\}$ in $E_j$. These quantities are defined only for adjacent facets, and every ridge sum below ranges over unordered adjacent pairs $\{j,k\}$.

\section{Literature}\label{appe:sec:literature}

The modern theory of high-dimensional Gaussian approximation over
hyperrectangles was initiated by \citet{CCK2013} and further developed by
\citet{CCK2017}; see \citet{CCKK2023Review} for a review.  For general
independent arrays with unrestricted covariance, \citet{CCK2017} obtained a
representative bound of order
\[
    \left\{
        \frac{B_n^2\log^7(dn)}{n}
    \right\}^{1/6},
\]
and \citet{Koike2021} reduced the logarithmic power from seven to five.
Related bootstrap improvements were obtained by \citet{DengZhang2020}.
The sharpest previous unrestricted-covariance benchmark is
\citet{CCKK2022}, who established
\[
    \left\{
        \frac{B_n^2\log^5(dn)}{n}
    \right\}^{1/4}
\]
under coordinatewise subexponential tails, marginal variance lower bounds,
and fourth-moment control.  These results allow the covariance matrix to be
singular and rely on randomized Lindeberg interpolation, smooth
approximations of maxima, and Gaussian anti-concentration.

Faster rates are available under additional covariance or distributional
structure.  Under covariance nondegeneracy,
\citet{FangKoike2021}, \citet{KuchibhotlaRinaldo2020}, and
\citet{Lopes2022} obtained cubic-root or near-\(n^{-1/2}\) rates.
For bounded summands and strongly nondegenerate covariance,
\citet{CCK2023} proved the nearly sharp bound
\[
    \frac{B_n(\log d)^{3/2}\log n}{\sqrt n},
\]
together with a matching lower bound up to the factor \(\log n\).
Global singularity alone does not rule out such rates:
\citet{FangKoikeLiuZhao2023} obtained a near-\(n^{-1/2}\) result when every
three-coordinate subsystem is uniformly nondegenerate.  Separately,
\citet{FangKoike2024} derived a near-\(n^{-1/2}\) rate for i.i.d.\
log-concave summands without assuming covariance nondegeneracy.  These
results indicate that the main obstruction is not global rank deficiency
alone, but the combination of local degeneracy and weak distributional
structure.

Theorem~\ref{thm:subexponential-gaussian-approximation} returns to the general
independent array setting and permits arbitrary, possibly singular,
covariance.  Its bound is of smaller order than the quarter-power bound of
\citet{CCKK2022} whenever
\[
    \frac{B_n^2\{\log(2dn)\}^{13}}{n}\longrightarrow0.
\]
In particular, for fixed \(B_n\) and polynomial dimension, the previous
unrestricted-covariance rate is \(n^{-1/4}\), whereas the present theorem
gives \(n^{-1/3}\).  The statement in \citet{CCK2023} that the
quarter-power bound is conjectured to be near-optimal was informal and did
not specify the minimax quantifiers.  Under the fixed-envelope,
polynomial-dimensional formulation adopted in this paper, the present upper
bound rules out that conjecture.  

Methodologically, our argument differs from both the iterative randomized
Lindeberg method of \citet{CCKK2022} and the implicit-smoothing or mixed-smoothing
methods used under covariance nondegeneracy. We first restrict the problem
to the covariance support, then introduce a Gaussian-divisible proxy matching
the original sum through third moments.  A ghost-completion argument and
repeated-direction Gaussian integration by parts control the resulting
fourth-order terms, while a rank-free matrix-weighted Gaussian surface bound
controls the remaining third-order term.  Higher-order Edgeworth and
bootstrap expansions, such as those of \citet{FangLiu2025} and
\citet{Koike2026}, retain rather than bound this third-order correction and
are therefore complementary to our results.

% \paragraph{Related work.}
% High-dimensional Gaussian approximation over rectangles has been developed for three broad regimes. For general arrays with unrestricted covariance, the representative sample-size exponents progressed from $1/6$ in \citet{CCK2017} and \citet{Koike2021} to $1/4$ in \citet{CCKK2022}. Under covariance nondegeneracy, cubic-root and near-$n^{-1/2}$ rates are available; see \citet{FangKoike2021}, \citet{KuchibhotlaRinaldo2020}, \citet{Lopes2022}, \citet{CCK2023}, and \citet{FangKoikeLiuZhao2023}. For i.i.d. log-concave summands, \citet{FangKoike2024} obtain a near-$n^{-1/2}$ rate without requiring covariance nondegeneracy. Table~\ref{tab:literature-summary} summarizes these comparisons, suppressing logarithmic factors.

\section{Example}

\begin{example}[Counterexample without Assumption~\ref{assu:marginal-variance-lower-bound}]
\label{exam:counter.variance.lower.bound}
Let \(d=1\), \(a=0\), \(X_1=\varepsilon_n\epsilon\), and \(X_i=0\) for \(i\geq2\), where \(\epsilon\) is a Rademacher random variable and \(\varepsilon_n\downarrow0\). Taking \(B_n=\varepsilon_n/\log 2\), Assumption~\ref{assu:sub.exp} holds, but \(n^{-1}\sum_{i=1}^n\mathbb E[X_i^2]=\varepsilon_n^2/n\). For \(\alpha=1/4\), the left-hand side of \eqref{eq:subexponential-gaussian-critical-value-bound} equals \(1/4\), whereas the rate on the right-hand side converges to zero.
\end{example}

\section{Supporting results for Section~\ref{sec:preparation}}
\label{sec:preparation-appendix}

\begin{lemma}
\label{lem:positive-diagonal-invariance}
Let $V,W\in\mathbb R^d$ and let $D$ be a positive diagonal matrix. Then
\begin{align*}
\sup_{A\in\mathcal R^d}|\mathbb P(V\in A)-\mathbb P(W\in A)|
&=
\sup_{A\in\mathcal R^d}|\mathbb P(DV\in A)-\mathbb P(DW\in A)|.
\end{align*}
\end{lemma}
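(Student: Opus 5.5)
The plan is to show that the map \(A\mapsto D^{-1}A\) is a bijection of \(\mathcal R^d\) onto itself. Once that is in hand, the two suprema run over the same family of events, so they coincide.

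Write \(D=\operatorname{diag}(d_1,\ldots,d_d)\) with every \(d_j>0\), and take a rectangle \(A=\prod_{j=1}^d I_j\in\mathcal R^d\). The key identity is
\begin{align*}
\{DV\in A\}
=\{V\in D^{-1}A\},
\qquad
D^{-1}A=\prod_{j=1}^d d_j^{-1}I_j .
\end{align*}
The product form holds because \(D\) acts coordinatewise, so \(DV\in A\) exactly when \(d_jV_j\in I_j\) for every \(j\). Each \(d_j^{-1}I_j\) is again an interval: a positive scaling of \(\mathbb R\) is an increasing bijection, so it sends intervals to intervals. It also preserves open versus closed endpoints and bounded versus unbounded sides, and sends the empty set to itself. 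Hence \(D^{-1}A\in\mathcal R^d\). The same argument with \(D^{-1}\), which is also positive diagonal, shows that \(A\mapsto DA\) maps \(\mathcal R^d\) into itself. These two maps are mutually inverse, so \(A\mapsto D^{-1}A\) is a bijection of \(\mathcal R^d\).

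Applying the identity to both \(V\) and \(W\) gives
\[
|\mathbb P(DV\in A)-\mathbb P(DW\in A)|=|\mathbb P(V\in D^{-1}A)-\mathbb P(W\in D^{-1}A)|
\]
for every \(A\in\mathcal R^d\). Taking the supremum over \(A\), and using that \(D^{-1}A\) ranges over all of \(\mathcal R^d\), yields the claimed equality.

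There is no substantive obstacle here. The only point needing care is that \(\mathcal R^d\) is defined with arbitrary intervals, including degenerate and unbounded ones. That is exactly why scaling invariance holds for every type of interval and why no endpoint conventions have to be checked separately.
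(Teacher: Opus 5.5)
Your proof is correct and follows the same route as the paper: both rest on the identity $\{DV\in A\}=\{V\in D^{-1}A\}$ and on $A\mapsto D^{-1}A$ being a bijection of $\mathcal R^d$, so the two suprema range over the same family of rectangles. You additionally spell out why positive coordinatewise scaling sends intervals to intervals and why $A\mapsto DA$ is the inverse map, which the paper leaves implicit.
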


\begin{proof}
For every $A\in\mathcal R^d$, $D^{-1}A\in\mathcal R^d$ and
\begin{align*}
\mathbb P(DV\in A)-\mathbb P(DW\in A)
&=
\mathbb P(V\in D^{-1}A)-\mathbb P(W\in D^{-1}A).
\end{align*}
Since $A\mapsto D^{-1}A$ is a bijection of $\mathcal R^d$, taking the two suprema proves the result.
\end{proof}

\begin{lemma}
\label{lem:closed-rectangles}
For any probability measures \(\mu,\nu\) on \(\mathbb R^d\),
\begin{align*}
\sup_{A\in\mathcal R^d}
|\mu(A)-\nu(A)|
&=
\sup_{\substack{
a^-,a^+\in\mathbb R^d\\
a^-\leq a^+\ {\rm coordinatewise}
}}
\left|
\mu\left(\prod_{j=1}^d[a_j^-,a_j^+]\right)
-
\nu\left(\prod_{j=1}^d[a_j^-,a_j^+]\right)
\right|.
\end{align*}
Thus the supremum may be restricted to nonempty closed rectangles
whose endpoints are all finite.
\end{lemma}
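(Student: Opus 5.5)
The plan is to prove the two inequalities ``\(\geq\)'' and ``\(\leq\)'' separately. The direction ``\(\geq\)'' is immediate, because every nonempty closed rectangle with finite endpoints belongs to \(\mathcal R^d\). All the work is in ``\(\leq\)''. Let \(s\) denote the supremum on the right-hand side. Fix an arbitrary \(A=\prod_{j=1}^d I_j\in\mathcal R^d\); we need \(|\mu(A)-\nu(A)|\leq s\).

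If \(A\) is empty, both measures give it mass zero and there is nothing to prove. Otherwise, every interval \(I_j\) is nonempty, and we approximate \(A\) from inside by compact rectangles. For each \(j\), write \(I_j\) as an increasing union of closed bounded intervals \(I_j^{(m)}=[a_{j,m}^-,a_{j,m}^+]\) with \(a_{j,m}^-\leq a_{j,m}^+\). Concretely:
\begin{itemize}
\item shrink an open endpoint inward by \(1/m\);
\item replace an infinite endpoint by \(\pm m\);
\item keep a closed finite endpoint fixed.
\end{itemize}
For small \(m\) this construction may produce a degenerate or empty interval. To avoid this, fix a point \(c_j\in I_j\) and, for every \(m\), include \(c_j\) by taking the convex hull of \(\{c_j\}\) and the shrunken interval. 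Then \(I_j^{(m)}\) is always nonempty and closed with finite endpoints, and \(\bigcup_m I_j^{(m)}=I_j\).

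Define \(A_m=\prod_j I_j^{(m)}\). Each \(A_m\) is a nonempty closed finite rectangle, \(A_m\uparrow A\), and \(\bigcup_m A_m=A\). The union identity holds because \(d\) is finite: a point of \(A\) lies in \(I_j^{(m)}\) for all large \(m\) in each coordinate, hence in \(A_m\) for all large \(m\). By continuity from below, \(\mu(A_m)\to\mu(A)\) and \(\nu(A_m)\to\nu(A)\). Since \(|\mu(A_m)-\nu(A_m)|\leq s\) for every \(m\), passing to the limit gives \(|\mu(A)-\nu(A)|\leq s\). Taking the supremum over \(A\in\mathcal R^d\) finishes the proof.

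The only step requiring care is the construction of the inner approximations: they must remain nonempty, monotone in \(m\), and exhaust each \(I_j\), including when \(I_j\) is a single point or half-open. Using the convex hull with a fixed interior point \(c_j\) handles all of these cases at once. The final sentence of the lemma is just a restatement of the displayed identity.
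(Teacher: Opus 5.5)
Your proposal is correct and follows the same route as the paper: approximate each nonempty rectangle from inside by nonempty compact rectangles $A_m\uparrow A$, pass to the limit by continuity from below, and note that the reverse inequality is trivial. The only difference is cosmetic: you keep the early approximants nonempty by taking the convex hull with a fixed $c_j\in I_j$, while the paper discards finitely many initial empty terms. Also, $c_j$ need only lie in $I_j$, not in its interior (which is empty when $I_j$ is a singleton), as your earlier wording already allows.
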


\begin{proof}
If \(A\in\mathcal R^d\) is empty, then $|\mu(A)-\nu(A)|=0$, so empty rectangles do not affect the supremum. Now let $A=\prod_{j=1}^d I_j\in\mathcal R^d$ be nonempty. Then every \(I_j\) is a nonempty interval. For each
\(j\), choose an increasing sequence of nonempty bounded closed
intervals such that $I_{j,m}\subseteq I_j$ and $I_{j,m}\uparrow I_j$. Such a sequence is obtained by moving every excluded finite endpoint inward and truncating every infinite endpoint; finitely many initial empty terms, if any, are discarded. Set $A_m:=\prod_{j=1}^d I_{j,m}$. Then every \(A_m\) is a nonempty closed rectangle with finite
endpoints, and, since \(d\) is finite, $A_m\uparrow A$. Continuity from below gives $\mu(A_m)\longrightarrow\mu(A)$ and $\nu(A_m)\longrightarrow\nu(A)$, and therefore $|\mu(A_m)-\nu(A_m)|
\longrightarrow
|\mu(A)-\nu(A)|$. Hence the supremum over all rectangles is bounded by the supremum over nonempty closed rectangles with finite endpoints. The reverse inequality is immediate because every such rectangle belongs to \(\mathcal R^d\).
\end{proof}

\begin{lemma}
\label{lem:product-conditioning}
Let \(X_1,\ldots,X_n\) be independent random elements, where
\(X_i\) takes values in a measurable space
\((\mathcal X_i,\mathcal A_i)\).
For each \(i\), let \(E_i\in\mathcal A_i\) satisfy $\mathbb P(X_i\in E_i)>0$, and put $A_i:=\{X_i\in E_i\}$. Let \(X_{1,E_1},\ldots,X_{n,E_n}\) be independent random elements
satisfying $\mathcal L(X_{i,E_i})
=
\mathcal L(X_i\mid X_i\in E_i)$. Then
\begin{align*}
d_{\mathrm{TV}}
\left\{
\mathcal L(X_1,\ldots,X_n),
\mathcal L(X_{1,E_1},\ldots,X_{n,E_n})
\right\}
&=
1-\prod_{i=1}^n\mathbb P(X_i\in E_i)
\\
&\le
\sum_{i=1}^n\mathbb P(X_i\notin E_i).
\end{align*}
The same upper bound holds after applying any measurable map to the
product vector.
\end{lemma}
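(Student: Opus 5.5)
We couple the two product laws directly and bound the total variation distance by the probability that the coupling fails. The equality will come from computing the distance coordinatewise in terms of densities.

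For the equality, write $\mu_i=\mathcal L(X_i)$ and $\nu_i=\mathcal L(X_i\mid X_i\in E_i)$, so that $\nu_i$ has density $\mathbf 1_{E_i}/p_i$ with respect to $\mu_i$, where $p_i=\mathbb P(X_i\in E_i)$. The product measure $\nu=\bigotimes_i\nu_i$ then has density
\[
f(x)=\prod_{i=1}^n\frac{\mathbf 1_{E_i}(x_i)}{p_i}
\]
with respect to $\mu=\bigotimes_i\mu_i$. We use $d_{\mathrm{TV}}=\int(f-1)_+\,d\mu$. On the rectangle $E=\prod_i E_i$ we have $f=1/\prod_ip_i\ge1$, and off $E$ we have $f=0$. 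Hence
\[
d_{\mathrm{TV}}=\mu(E)\left(\frac{1}{\prod_ip_i}-1\right)=\prod_ip_i\left(\frac{1}{\prod_ip_i}-1\right)=1-\prod_ip_i,
\]
using $\mu(E)=\prod_ip_i$ by independence. The same value follows from $\int(1-f)_+\,d\mu=\mu(E^c)=1-\prod_ip_i$, which serves as a consistency check.

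For the inequality, we use the elementary bound $1-\prod_ip_i\le\sum_i(1-p_i)$, valid for $p_i\in[0,1]$. It follows by induction on $n$ from
\[
1-ab=(1-a)+a(1-b)\le(1-a)+(1-b)
\]
for $a,b\in[0,1]$. Since $1-p_i=\mathbb P(X_i\notin E_i)$, this gives the displayed upper bound.

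Finally, total variation does not increase under pushforward by a measurable map $g$. For any measurable set $B$,
\[
|\mu(g^{-1}B)-\nu(g^{-1}B)|\le d_{\mathrm{TV}}(\mu,\nu),
\]
so the same upper bound holds after applying any measurable map to the product vector. No step is a serious obstacle; the only point needing care is computing the density of the product of conditional laws, which relies on the independence of the $X_i$.
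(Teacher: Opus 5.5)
Your proof is correct and follows the paper's route. The density $\prod_i \mathbf 1_{E_i}(x_i)/p_i$ is exactly the statement that the product of conditional laws equals the joint law conditioned on $\bigcap_i A_i$, and your computation of $\int(f-1)_+\,d\mu$ verifies the fact $d_{\mathrm{TV}}\{P,P(\cdot\mid A)\}=P(A^c)$ that the paper quotes; the union bound and pushforward contraction then match the paper's steps (the coupling announced in your first sentence is never used, but nothing depends on it).
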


\begin{proof}
Since \(A_i\in\sigma(X_i)\) and the random elements
\(X_1,\ldots,X_n\) are independent, the events
\(A_1,\ldots,A_n\) are independent. Hence
\[
\mathbb P\left(\bigcap_{i=1}^nA_i\right)
=
\prod_{i=1}^n\mathbb P(A_i).
\]
Moreover, the product of the componentwise conditional laws is the
law of \((X_1,\ldots,X_n)\) conditional on
\(\bigcap_{i=1}^nA_i\). For every probability measure \(P\) and
measurable event \(A\) with \(P(A)>0\),
\[
d_{\mathrm{TV}}\{P,P(\cdot\mid A)\}=P(A^c).
\]
Therefore,
\[
d_{\mathrm{TV}}
\left\{
\mathcal L(X_1,\ldots,X_n),
\mathcal L(X_{1,E_1},\ldots,X_{n,E_n})
\right\}
=
1-\prod_{i=1}^n\mathbb P(A_i).
\]
The union bound gives
\[
1-\prod_{i=1}^n\mathbb P(A_i)
\le
\sum_{i=1}^n\mathbb P(A_i^c).
\]
Finally, total variation contracts under measurable pushforwards.
\end{proof}

\begin{lemma}
\label{lem:subexponential-tail-budgets}
Let \(B>0\). If $\mathbb E\exp(|X_j|/B)\leq2$ for $j=1,\ldots,d$, then, for every $t\geq0$,
\begin{align*}
\mathbb P(\|X\|_\infty>t)
&\leq
2de^{-t/B},\\
\mathbb E[\|X\|_\infty\mathbf1\{\|X\|_\infty>t\}]
&\leq
2d(t+B)e^{-t/B},\\
\mathbb E[\|X\|_\infty^2\mathbf1\{\|X\|_\infty>t\}]
&\leq
2d(t^2+2tB+2B^2)e^{-t/B}.
\end{align*}
\end{lemma}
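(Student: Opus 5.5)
The three bounds follow from a union bound over coordinates and two layer-cake integrations of the same per-coordinate tail.

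\emph{Step 1: the tail bound.} For each coordinate \(j\), Markov's inequality applied to \(\exp(|X_j|/B)\) gives
\[
\mathbb P(|X_j|>t)\leq e^{-t/B}\,\mathbb E\exp(|X_j|/B)\leq 2e^{-t/B}.
\]
A union bound over the \(d\) coordinates then yields
\[
\mathbb P(\|X\|_\infty>t)\leq 2de^{-t/B},
\]
which is the first inequality.

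\emph{Step 2: the first-moment bound.} Write \(M=\|X\|_\infty\). Since
\[
M\mathbf 1\{M>t\}=t\mathbf 1\{M>t\}+\int_t^\infty \mathbf 1\{M>s\}\,ds
\]
pointwise, Tonelli's theorem gives
\[
\mathbb E[M\mathbf 1\{M>t\}]=t\,\mathbb P(M>t)+\int_t^\infty\mathbb P(M>s)\,ds.
\]
Inserting the Step 1 bound and using \(\int_t^\infty e^{-s/B}\,ds=Be^{-t/B}\), this is at most
\[
2dte^{-t/B}+2dBe^{-t/B}=2d(t+B)e^{-t/B}.
\]

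\emph{Step 3: the second-moment bound.} Analogously,
\[
M^2\mathbf 1\{M>t\}=t^2\mathbf 1\{M>t\}+\int_t^\infty 2s\,\mathbf 1\{M>s\}\,ds,
\]
so
\[
\mathbb E[M^2\mathbf 1\{M>t\}]\leq 2dt^2e^{-t/B}+2d\int_t^\infty 2se^{-s/B}\,ds.
\]
Integrating by parts,
\[
\int_t^\infty 2se^{-s/B}\,ds=2Bte^{-t/B}+2B^2e^{-t/B},
\]
and the total is \(2d(t^2+2tB+2B^2)e^{-t/B}\), as claimed.

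No step here is a real obstacle. The only points needing care are the pointwise layer-cake identities when \(t\geq0\) and the justification for exchanging expectation and integral, which Tonelli's theorem gives for nonnegative integrands. All constants match the statement exactly.
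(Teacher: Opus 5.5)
Your proof is correct and follows the paper's own argument: Markov's inequality on $\exp(|X_j|/B)$ with a union bound for the tail, then the layer-cake identities $\mathbb E[V\mathbf1\{V>t\}]=t\,\mathbb P(V>t)+\int_t^\infty\mathbb P(V>s)\,ds$ and its second-moment analogue, evaluated with the exponential tail. Your explicit verification of the pointwise identities and of the integration by parts fills in the details the paper leaves to the reader.
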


\begin{proof}
Markov's inequality and a union bound give
\begin{align*}
\mathbb P(\|X\|_\infty>t)
&\leq
\sum_{j=1}^d\mathbb P(|X_j|>t)
\leq
2de^{-t/B}.
\end{align*}
For a nonnegative random variable $V$, we have
\begin{align*}
\mathbb E[V\mathbf1\{V>t\}]
&=
t\mathbb P(V>t)+\int_t^\infty \mathbb P(V>s)ds,\\
\mathbb E[V^2\mathbf1\{V>t\}]
&=
t^2\mathbb P(V>t)+2\int_t^\infty s\mathbb P(V>s)ds.
\end{align*}
Substituting the tail bound and evaluating the integrals proves the two moment inequalities.
\end{proof}

\begin{lemma}
\label{lem:conditional-centering}
There exists a universal constant \(C>0\) such that the following
holds. Let $X\in\mathbb R^d$ be centered, let $A=\{\|X\|_\infty\leq t\}$, and suppose $\mathbb P(A^c)\leq1/2$. Let $X_A\sim\mathbb P(X\mid A)$. If $\max_j\mathbb E[X_j^2]\leq M^2$, then
\begin{align*}
\|\mathbb E[X_A]\|_\infty
&\leq
2\mathbb E[\|X\|_\infty\mathbf1_{A^c}],
\\
\max_{j,k}\left|\operatorname{Cov}(X_A)_{jk}-\mathbb E[X_jX_k]\right|
&\leq
C\left\{
\mathbb E[\|X\|_\infty^2\mathbf1_{A^c}]
+
\mathbb P(A^c)M^2
+
\mathbb E[\|X\|_\infty\mathbf1_{A^c}]^2
\right\}.
\end{align*}
\end{lemma}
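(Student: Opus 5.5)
The plan is to compare $X_A$ with $X$ through the identity $\mathbb E[g(X_A)]=\mathbb E[g(X)\mathbf 1_A]/p$, where $p:=\mathbb P(A)\geq1/2$. Write $\epsilon:=\mathbb P(A^c)\leq1/2$.

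\emph{Mean bound.} Since $\mathbb E[X]=0$, we have $\mathbb E[X\mathbf 1_A]=-\mathbb E[X\mathbf 1_{A^c}]$. Hence, coordinatewise,
\[
|\mathbb E[X_{A,j}]|=\frac{|\mathbb E[X_j\mathbf 1_{A^c}]|}{p}\leq 2\,\mathbb E[\|X\|_\infty\mathbf 1_{A^c}],
\]
using $|X_j|\leq\|X\|_\infty$ and $1/p\leq2$. This gives the first inequality directly.

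\emph{Covariance bound.} Write
\[
\operatorname{Cov}(X_A)_{jk}=\frac{\mathbb E[X_jX_k\mathbf 1_A]}{p}-\mathbb E[X_{A,j}]\,\mathbb E[X_{A,k}].
\]
By the mean bound, the product of means is at most $4\,\mathbb E[\|X\|_\infty\mathbf 1_{A^c}]^2$ in absolute value. For the first term, split
\[
\frac{\mathbb E[X_jX_k\mathbf 1_A]}{p}-\mathbb E[X_jX_k]
=\frac{\mathbb E[X_jX_k]-\mathbb E[X_jX_k\mathbf 1_{A^c}]}{p}-\mathbb E[X_jX_k]
=\frac{1-p}{p}\,\mathbb E[X_jX_k]-\frac{\mathbb E[X_jX_k\mathbf 1_{A^c}]}{p}.
\]
By Cauchy--Schwarz, $|\mathbb E[X_jX_k]|\leq M^2$, and $(1-p)/p\leq2\epsilon$, so the first piece is at most $2\epsilon M^2$. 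The second piece is at most $2\,\mathbb E[\|X\|_\infty^2\mathbf 1_{A^c}]$.

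Adding the three contributions gives the claimed bound with $C=4$. No step is delicate; the only care needed is the sign bookkeeping in the mean identity and remembering that $p\geq1/2$ is needed to bound each $1/p$ factor.
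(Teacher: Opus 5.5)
Your proof is correct and follows the paper's argument essentially line by line: the mean bound comes from $\mathbb E[X\mathbf 1_A]=-\mathbb E[X\mathbf 1_{A^c}]$ together with $\mathbb P(A)\geq1/2$, and the covariance bound comes from the identity $\mathbb E[X_jX_k\mathbf 1_A]/\mathbb P(A)-\mathbb E[X_jX_k]=\{\mathbb P(A^c)\mathbb E[X_jX_k]-\mathbb E[X_jX_k\mathbf 1_{A^c}]\}/\mathbb P(A)$ plus the product-of-means correction. Your explicit constant $C=4$ is a small bonus over the paper's unspecified $C$.
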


\begin{proof}
Since $\mathbb E[X]=0$, $\mathbb E[X_A]
=
-\frac{\mathbb E[X\mathbf1_{A^c}]}{\mathbb P(A)}$,
so $\mathbb P(A)\geq1/2$ gives the first inequality. 

Moreover,
\begin{align*}
\mathbb E[(X_A)_j(X_A)_k]-\mathbb E[X_jX_k]
&=
\frac{\mathbb P(A^c)\mathbb E[X_jX_k]-\mathbb E[X_jX_k\mathbf1_{A^c}]}{\mathbb P(A)}.
\end{align*}
Using $|\mathbb E[X_jX_k]|\leq M^2$, $|X_jX_k|\leq\|X\|_\infty^2$, and
$
\operatorname{Cov}(X_A)_{jk}
=
\mathbb E[(X_A)_j(X_A)_k]-\mathbb E[(X_A)_j]\mathbb E[(X_A)_k]$
proves the second inequality.
\end{proof}

\begin{lemma}
\label{lem:Gaussian-transfer}
For every \(c>0\), there exists a constant \(C_c\in(0,\infty)\)
such that the following holds. Let $G_1\sim N(0,\Sigma_1)$ and $G_2\sim N(0,\Sigma_2)$ in $\mathbb R^d$, and suppose all diagonal entries of $\Sigma_1$ and $\Sigma_2$ are at least $c>0$. Then, for every $\mu\in\mathbb R^d$,
\begin{align*}
\sup_{A\in\mathcal R^d}|\mathbb P(G_1+\mu\in A)-\mathbb P(G_1\in A)|
&\leq
C_c\|\mu\|_\infty\sqrt{\log(2d)},
\\
\sup_{A\in\mathcal R^d}|\mathbb P(G_1\in A)-\mathbb P(G_2\in A)|
&\leq
C_c\min\left\{1,\left[\|\Sigma_1-\Sigma_2\|_\infty\{\log(2d)\}^2\right]^{1/2}\right\}.
\end{align*}
\end{lemma}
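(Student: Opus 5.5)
For the first inequality I will reduce to a boundary-layer estimate and apply Nazarov's inequality. By Lemma~\ref{lem:closed-rectangles} it suffices to treat $A=\prod_{j=1}^d[a_j^-,a_j^+]$ with finite endpoints. Put $\varepsilon=\|\mu\|_\infty$. Then $\{G_1+\mu\in A\}=\{G_1\in A-\mu\}$, and
\[
A^{-\varepsilon}\subseteq A-\mu\subseteq A^{\varepsilon},\qquad A^{\pm\varepsilon}:=\prod_{j=1}^d[a_j^-\mp\varepsilon,\,a_j^+\pm\varepsilon].
\]
Hence both $\mathbb P(G_1+\mu\in A)-\mathbb P(G_1\in A)$ and its negative are at most $\mathbb P(G_1\in A^{\varepsilon})-\mathbb P(G_1\in A^{-\varepsilon})$.

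Next I write the rectangle event as a maximum of $2d$ coordinates. Consider the $2d$-vector $V=(G_1,-G_1)$ with shifts $(a^+,-a^-)$. Then $\{G_1\in A^{\pm\varepsilon}\}$ becomes $\{\max_{1\le j\le 2d}(V_j-\tilde a_j)\le\pm\varepsilon\}$. Every coordinate of $V$ has variance at least $c$, so Nazarov's inequality (the unconditional case of Lemma~\ref{lem:conditional-nazarov}, from \citet{CCK2015}) applies. It bounds the layer probability by $C(2\varepsilon/\sqrt c)\sqrt{\log(4d)}$, which gives $C_c\|\mu\|_\infty\sqrt{\log(2d)}$ after adjusting constants.

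For the second inequality I use the same doubling trick. The pair $(G_i,-G_i)$ is a centered Gaussian in $\mathbb R^{2d}$ whose covariance diagonal is still bounded below by $c$. Its entrywise covariance difference is still $\|\Sigma_1-\Sigma_2\|_\infty$, and $\log(4d)\asymp\log(2d)$. Rectangle probabilities then become Kolmogorov-type probabilities of $\max_j(V_j-\tilde a_j)$. I intend to invoke the Gaussian-to-Gaussian comparison of \citet{CCK2023} for maxima of Gaussian vectors with marginal variances bounded below. That result gives
\[
\sup_t\Bigl|\mathbb P\bigl(\max_j V^{(1)}_j\le t\bigr)-\mathbb P\bigl(\max_j V^{(2)}_j\le t\bigr)\Bigr|\le C_c\bigl[\|\Sigma_1-\Sigma_2\|_\infty\{\log(2d)\}^2\bigr]^{1/2}.
\]
It is stated for arbitrary shift vectors, since adding $-\tilde a$ to both vectors leaves the covariances unchanged. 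Taking the minimum with the trivial bound $1$ finishes the argument.

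The main obstacle is the square-root dependence on $\|\Sigma_1-\Sigma_2\|_\infty$ without any covariance nondegeneracy. The elementary route is Slepian interpolation $\sqrt t\,G_1+\sqrt{1-t}\,G_2$ combined with the softmax smoothing of Section~\ref{sec:smooth-selector} and Nazarov's inequality. That route yields only $(\Delta\log^2 d)^{1/3}$, with $\Delta:=\|\Sigma_1-\Sigma_2\|_\infty$, because the second-derivative bound costs $\beta^2$ against the smoothing error $1/\beta$.

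If a self-contained proof is required, I would first try to reproduce the refined argument of \citet{CCK2023}. Along the Slepian path, I would perform the Gaussian integration by parts on the indicator of the max cell rather than on the smoothed function. That converts $\sum_{j,k}(\Sigma_1-\Sigma_2)_{jk}\,\mathbb E\,\partial_{jk}$ into boundary-layer and density terms for the maximum, which can be controlled by Nazarov-type estimates. The rate then depends on $\Delta$ only linearly inside the square root, rather than through a $\beta$-trade-off.
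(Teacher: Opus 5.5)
Your proposal follows the paper's proof: Nazarov's inequality applied to the doubled vector $(G_1,-G_1)$ gives the shift bound, and the covariance bound is taken from an existing arbitrary-covariance Gaussian comparison. The only correction is the reference: the square-root comparison $\bigl[\|\Sigma_1-\Sigma_2\|_\infty\{\log(2d)\}^2\bigr]^{1/2}$ under marginal-variance lower bounds alone is the one in \citet{CCKK2022}, which the paper cites, whereas \citet{CCK2023} assumes nondegenerate covariance, an assumption that fails here both for a possibly singular $\Sigma_i$ and for your doubled vector $(G_i,-G_i)$, whose covariance is always singular.
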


\begin{proof}
The shift bound follows by applying Nazarov anti-concentration to the doubled vector $(G_1,-G_1)$; see \citet{CCK2015}. The covariance bound is the arbitrary-covariance Gaussian comparison inequality of \citet{CCKK2022}.
\end{proof}

\begin{lemma}
\label{lem:critical-value-transfer}
Let \(V\) and \(V^G\) be real-valued random variables, and define $F(t)=\mathbb P(V\le t)$ and $F^G(t)=\mathbb P(V^G\le t)$. Suppose \(F^G\) is continuous and $\sup_{t\in\mathbb R}|F(t)-F^G(t)|\le\varepsilon$. For \(\alpha\in(0,1)\), let
\[
q_{1-\alpha}^G
=
\inf\{t:F^G(t)\ge1-\alpha\}.
\]
Then
\[
\left|
\mathbb P\{V>q_{1-\alpha}^G\}-\alpha
\right|
\le\varepsilon.
\]
\end{lemma}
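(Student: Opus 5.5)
The plan is to use continuity of \(F^G\) to pin down the value of \(F^G\) at the quantile, and then transfer this through the uniform Kolmogorov bound. Write \(q=q_{1-\alpha}^G\). Since \(F^G\) is nondecreasing, right-continuous, and tends to \(0\) and \(1\) at \(\mp\infty\), the set \(\{t:F^G(t)\ge1-\alpha\}\) is nonempty and bounded below for \(\alpha\in(0,1)\). Hence \(q\) is finite, and right-continuity gives \(F^G(q)\ge1-\alpha\). For every \(t<q\) we have \(F^G(t)<1-\alpha\), and letting \(t\uparrow q\) the continuity of \(F^G\) yields \(F^G(q)\le1-\alpha\). Therefore \(F^G(q)=1-\alpha\) exactly.

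Next I apply the Kolmogorov hypothesis at the single point \(q\):
\[
\bigl|\mathbb P\{V>q\}-\alpha\bigr|
=
\bigl|1-F(q)-\{1-F^G(q)\}\bigr|
=
|F^G(q)-F(q)|
\le\varepsilon.
\]
This is the whole argument. No left-limit issue for \(F\) arises, because the event \(\{V>q\}\) is exactly the complement of \(\{V\le q\}\) and only \(F\) at the point \(q\) is used.

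The only point that needs care is the first step, showing that \(q\) is finite and that \(F^G(q)=1-\alpha\). Right-continuity supplies the inequality \(F^G(q)\ge1-\alpha\), and continuity of \(F^G\) from the left rules out a jump across the level \(1-\alpha\). Without continuity the statement fails: if \(F^G\) had an atom at \(q\), the identity \(F^G(q)=1-\alpha\) could break and the error would pick up the atom mass.

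In the application, continuity of \(F^G_{n,a}\) holds because the coordinates of \(Z\) have unit variance after standardization. A maximum of finitely many nondegenerate Gaussians has no atoms, since \(\mathbb P\{\max_j(Z_j+a_j)=t\}\le\sum_j\mathbb P\{Z_j=t-a_j\}=0\).
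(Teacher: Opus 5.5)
Your proof is correct and follows the paper's argument: continuity of $F^G$ forces $F^G(q_{1-\alpha}^G)=1-\alpha$, and the Kolmogorov bound is then applied at that single point. You also spell out why the quantile is finite and give both one-sided inequalities, which the paper leaves implicit. One small correction to your closing aside: in the application $Z\sim N(0,\Sigma_n)$ is the unstandardized Gaussian, so the absence of atoms comes from $(\Sigma_n)_{jj}\ge b^2>0$ under Assumption~\ref{assu:marginal-variance-lower-bound}, not from the unit variances produced by standardization.
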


\begin{proof}
Continuity of \(F^G\), together with its limits at
\(\pm\infty\), gives $F^G(q_{1-\alpha}^G)=1-\alpha$. Therefore,
\begin{align*}
\left|
\mathbb P\{V>q_{1-\alpha}^G\}-\alpha
\right|
&=
\left|
F(q_{1-\alpha}^G)-(1-\alpha)
\right|
\\
&=
\left|
F(q_{1-\alpha}^G)-F^G(q_{1-\alpha}^G)
\right|
\le\varepsilon.
\end{align*}
\end{proof}

\begin{proof}[Proof of
Lemma~\ref{lem:preprocessing-transfer-main}]
Work throughout under
Eq.~\eqref{eq:preprocessing-regime}.
Set $t=32B_n\log(2dn)$. Lemma~\ref{lem:subexponential-tail-budgets} gives, for every $i$,
\begin{align*}
\mathbb P(A_i^c)
&\leq
2d\exp\{-32\log(2dn)\}
=
2d(2dn)^{-32},
\end{align*}
and hence
\begin{align*}
\sum_{i=1}^n\mathbb P(A_i^c)
&\leq
2nd(2dn)^{-32}
\leq
n^{-20}.
\end{align*}
The nontrivial regime implies
\begin{align*}
\left(\frac{B_n^2}{n}\right)^{1/3}\{\log(2dn)\}^{7/3}
\leq C,
\quad
\frac{B_n}{\sqrt n}
\leq
C\{\log(2dn)\}^{-7/2}.
\end{align*}
Since $\mathbb E[X_i]=0$ and $\mathbb P(A_i)\geq1/2$,
\begin{align*}
\left\|\mathbb E[X_{i,A_i}]\right\|_\infty
&\leq
2\mathbb E[\|X_i\|_\infty\mathbf1_{A_i^c}]
\leq
4d\{t+B_n\}(2dn)^{-32}.
\end{align*}
Therefore,
\begin{align*}
\left\|\frac{1}{\sqrt n}\sum_{i=1}^n\mathbb E[X_{i,A_i}]\right\|_\infty
&\leq
4\sqrt n\,d\{t+B_n\}(2dn)^{-32}
\leq
n^{-10}.
\end{align*}
Assumption~\ref{assu:sub.exp} also gives $\max_j\mathbb E[X_{ij}^2]\leq CB_n^2$. Lemmas~\ref{lem:conditional-centering} and~\ref{lem:subexponential-tail-budgets} yield
\begin{align*}
\|\widetilde\Sigma_n-\Sigma_n\|_\infty
\leq
\frac{C}{n}\sum_{i=1}^n\Big[
2d(t^2+2tB_n+2B_n^2)(2dn)^{-32}
+
2dB_n^2(2dn)^{-32}
+
\{2d(t+B_n)(2dn)^{-32}\}^2
\Big]
\leq
n^{-10}.
\end{align*}
This proves Eq.~\eqref{eq:preprocessing-errors-main}. In the preceding calculation, $\widetilde\Sigma_n$ denotes the covariance before the final marginal variance standardization. It also gives
\begin{align*}
(\widetilde\Sigma_n)_{jj}
&\geq
(\Sigma_n)_{jj}-n^{-10}
\geq
b^2/2
\end{align*}
for all sufficiently large $n$.

Let $X_{1,A_1},\ldots,X_{n,A_n}$ be coupled through the product conditioning law. Lemma~\ref{lem:product-conditioning} gives
\begin{align*}
d_{\mathrm{TV}}\left\{\mathbb P\left(\frac{1}{\sqrt n}\sum_{i=1}^nX_i\right),\mathbb P\left(\frac{1}{\sqrt n}\sum_{i=1}^nX_{i,A_i}\right)\right\}
&\leq
n^{-20}.
\end{align*}
Since
\begin{align*}
\frac{1}{\sqrt n}\sum_{i=1}^nX_{i,A_i}
&=
\frac{1}{\sqrt n}\sum_{i=1}^n\widetilde X_i
+
\frac{1}{\sqrt n}\sum_{i=1}^n\mathbb E[X_{i,A_i}],
\end{align*}
translation invariance and the triangle inequality give
\begin{align*}
\sup_{A\in\mathcal R^d}|\mathbb P(W\in A)-\mathbb P(Z\in A)|
\leq{}&
n^{-20}
+
\sup_{A\in\mathcal R^d}\left|
\mathbb P\left(\frac{1}{\sqrt n}\sum_{i=1}^n\widetilde X_i\in A\right)
-\mathbb P\{N(0,\widetilde\Sigma_n)\in A\}
\right|
\nonumber\\
&+
\sup_{A\in\mathcal R^d}\left|
\mathbb P\{N(\mu_n,\widetilde\Sigma_n)\in A\}
-\mathbb P\{N(0,\Sigma_n)\in A\}
\right|.
\end{align*}
By Lemma~\ref{lem:Gaussian-transfer}, the final term is bounded by
\begin{align*}
C_{b}\left[
n^{-10}\sqrt{\log(2d)}
+
\left\{n^{-10}\{\log(2d)\}^2\right\}^{1/2}
\right]
&\leq
C_b n^{-4}.
\end{align*}
Finally, Lemma~\ref{lem:positive-diagonal-invariance} identifies the middle rectangle distance with the one based on the final marginally standardized variables. This proves Lemma~\ref{lem:preprocessing-transfer-main}.
\end{proof}

\section{Supporting results for Section~\ref{sec:smooth-selector}}
\label{sec:selector-appendix}

\begin{lemma}
\label{lem:one-sided-selector}
Fix $0<\delta\leq1/4$, let $h=\log(8d/\delta)$, and put $a_0^+=h/\beta$ and $a_0^-=-h/\beta$. Then, for every $y\in\mathbb R^r$,
\begin{align*}
(1-\delta)\mathbf1\{M_a(y)\leq0\}
&\leq
\pi_{\beta,a_0^+,a}(y)
\leq
\mathbf1\left\{M_a(y)\leq\frac{2h}{\beta}\right\}+\delta,
\\
(1-\delta)\mathbf1\left\{M_a(y)\leq-\frac{2h}{\beta}\right\}
&\leq
\pi_{\beta,a_0^-,a}(y)
\leq
\mathbf1\{M_a(y)\leq0\}+\delta.
\end{align*}
\end{lemma}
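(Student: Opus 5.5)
The plan is to rewrite $\pi_0$ as a function of the single scalar $M_a(y)$ and then compare exponentials directly. Since $s_j(y)=\langle u_j,y\rangle-a_j$ for $j\geq1$ and $s_0=a_0$, we have
\[
\pi_{\beta,a_0,a}(y)=\frac{1}{1+\sum_{j=1}^{2d}\exp\{\beta(s_j(y)-a_0)\}}.
\]
The sum $\Sigma(y):=\sum_{j=1}^{2d}e^{\beta s_j(y)}$ satisfies the elementary sandwich
\[
e^{\beta M_a(y)}\leq\Sigma(y)\leq 2d\,e^{\beta M_a(y)},
\]
so that
\[
\frac{1}{1+2d\,e^{\beta(M_a-a_0)}}\leq\pi_0\leq\frac{1}{1+e^{\beta(M_a-a_0)}}.
\]
Each of the four inequalities then reduces to a one-line case check, using $e^{h}=8d/\delta$.

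\emph{Lower bounds.} For $a_0=a_0^+=h/\beta$ on the event $M_a\leq0$, we get $2d\,e^{\beta M_a-h}\leq 2d\,\delta/(8d)=\delta/4$. Hence $\pi_0\geq 1/(1+\delta/4)\geq 1-\delta/4\geq1-\delta$. For $a_0=a_0^-=-h/\beta$ on the event $M_a\leq-2h/\beta$, we have $\beta M_a+h\leq-h$, so the same bound $\pi_0\geq 1-\delta$ holds. Off these events the indicator is zero and $\pi_0\geq0$, so nothing needs checking there.

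\emph{Upper bounds.} On the event where the indicator equals one, the bound $\pi_0\leq1$ already suffices. It remains to treat the complementary event. For $a_0^+$, suppose $M_a>2h/\beta$. Then $\beta M_a-h>h$, so $e^{\beta(M_a-a_0)}>8d/\delta$ and $\pi_0\leq\delta/(8d)\leq\delta$. For $a_0^-$, suppose $M_a>0$. Then $\beta M_a+h>h$, and the same conclusion $\pi_0\leq\delta$ follows.

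No step is a real obstacle. The only points needing attention are the direction of the sandwich (the lower bound on $\pi_0$ needs the factor $2d$, the upper bound needs only the maximal term) and checking that $h=\log(8d/\delta)$ leaves enough slack, which it does comfortably. The hypothesis $\delta\leq1/4$ is not even needed beyond ensuring $h>0$.
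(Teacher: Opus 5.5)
Your proof is correct and is essentially the paper's argument. The paper likewise bounds the $2d$ real-score exponentials above by $2d\,e^{\beta M_a(y)}$ for the two lower bounds, giving $\pi\geq(1+\delta/4)^{-1}\geq1-\delta$, and below by the single largest term for the two upper bounds, giving $\pi\leq e^{-h}=\delta/(8d)$; your side remark is also essentially right, since $\delta\leq1/4$ is only used later, when dividing by $1-\delta$ in the proof of Proposition~\ref{prop:probability-comparison-through-selectors}.
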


\begin{proof}
If $M_a(y)\leq0$, then
\begin{align*}
\sum_{j=1}^{2d}\exp\{\beta(\langle u_j,y\rangle-a_j)\}
\leq
2d,
\quad 
\exp(\beta a_0^+)
=
\frac{8d}{\delta},
\end{align*}
which gives us
\begin{align*}
\pi_{\beta,a_0^+,a}(y)
&\geq
\frac{8d/\delta}{2d+8d/\delta}
=
\frac{1}{1+\delta/4}
\geq
1-\delta.
\end{align*}
If $M_a(y)>2h/\beta$, one real-score exponential exceeds $e^{2h}$, and
\begin{align*}
\pi_{\beta,a_0^+,a}(y)
&\leq
e^{-h}
=
\frac{\delta}{8d}
\leq
\delta.
\end{align*}
This proves the first pair of inequalities. If $M_a(y)\leq-2h/\beta$, then
\begin{align*}
\frac{\sum_{j=1}^{2d}\exp\{\beta(\langle u_j,y\rangle-a_j)\}}{\exp(\beta a_0^-)}
&\leq
2de^{-h}
=
\delta/4,
\end{align*}
which gives $\pi_{\beta,a_0^-,a}(y)\geq(1+\delta/4)^{-1}\geq1-\delta$. If $M_a(y)>0$, one real-score exponential exceeds one, and
\begin{align*}
\pi_{\beta,a_0^-,a}(y)
&\leq
e^{-h}
\leq
\delta.
\end{align*}
This proves the second pair.
\end{proof}

\begin{proof}[Proof of Proposition~\ref{prop:probability-comparison-through-selectors}]
Put $w=2h/\beta$. Lemma~\ref{lem:one-sided-selector} and Eq.~\eqref{eq:selector-comparison-hypothesis} give
\begin{align*}
(1-\delta)\mathbb P\{M_a(S)\leq0\}
&\leq
\mathbb E\pi_{\beta,a_0^+,a}(S)
\leq
\mathbb E\pi_{\beta,a_0^+,a}(G)+\Delta
\leq
\mathbb P\{M_a(G)\leq w\}+\delta+\Delta.
\end{align*}
Since $\delta\leq1/4$,
\begin{align*}
\mathbb P\{M_a(S)\leq0\}-\mathbb P\{M_a(G)\leq0\}
&\leq
\mathbb P\{0<M_a(G)\leq w\}+C(\delta+\Delta).
\end{align*}
For the reverse difference, replace every real threshold $a_j$ by $a_j-w$, so the resulting maximum equals $M_a+w$. Interchanging $S$ and $G$ in Eq.~\eqref{eq:selector-comparison-hypothesis} gives
\begin{align*}
(1-\delta)\mathbb P\{M_a(G)\leq-w\}
&\leq
\mathbb P\{M_a(S)\leq0\}+\delta+\Delta,
\end{align*}
and hence
\begin{align*}
\mathbb P\{M_a(G)\leq0\}-\mathbb P\{M_a(S)\leq0\}
&\leq
\mathbb P\{-w<M_a(G)\leq0\}+C(\delta+\Delta).
\end{align*}
Both shells are bounded by the supremum in the proposition. Taking the supremum over $a$ proves the result.
\end{proof}

\begin{lemma}
\label{lem:conditional-nazarov}
There exists a universal constant $C>0$ such that the following holds. Let $H\in\mathbb R^r$, let $G\sim N(0,I_r)$ be independent of $H$, and let $\sigma\geq\sigma_0>0$. Then, for every $\varepsilon\geq0$,
\begin{align*}
\sup_{\substack{a\in\mathbb R^{2d}\\t\in\mathbb R}}
\mathbb P\{t<M_a(H+\sigma G)\leq t+\varepsilon\}
&\leq
C\sigma_0^{-1}\varepsilon\sqrt{\log(4d)}.
\end{align*}
\end{lemma}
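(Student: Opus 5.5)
We prove Lemma~\ref{lem:conditional-nazarov} by conditioning on \(H\) and reducing to Nazarov's anti-concentration inequality for a Gaussian vector with unit-variance coordinates. Fix \(a\in\mathbb R^{2d}\), \(t\in\mathbb R\), and \(\varepsilon\geq0\). Since \(H\) and \(G\) are independent, Fubini's theorem gives
\[
\mathbb P\{t<M_a(H+\sigma G)\leq t+\varepsilon\}
=
\mathbb E\bigl[\phi(H)\bigr],
\qquad
\phi(h):=\mathbb P\{t<M_a(h+\sigma G)\leq t+\varepsilon\}.
\]
It therefore suffices to bound \(\phi(h)\) uniformly in the deterministic shift \(h\in\mathbb R^r\), with a bound that does not depend on \(a\), \(t\), or \(h\).

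For fixed \(h\), write
\[
M_a(h+\sigma G)=\max_{1\leq j\leq2d}\{\sigma\langle u_j,G\rangle+c_j\},
\qquad
c_j:=\langle u_j,h\rangle-a_j.
\]
Dividing by \(\sigma\), the event becomes
\[
t/\sigma<\max_{j\leq 2d}\{\xi_j+c_j/\sigma\}\leq t/\sigma+\varepsilon/\sigma,
\qquad
\xi_j:=\langle u_j,G\rangle.
\]
Here \((\xi_1,\ldots,\xi_{2d})\) is a centered Gaussian vector. Each coordinate has variance \(\|u_j\|_2^2=1\), because \(\|u_j\|_2=1\) for \(j\leq d\) and \(u_{d+j}=-u_j\). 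The covariance matrix may be singular, and some coordinates are exact negatives of others. This causes no difficulty, since Nazarov's inequality in the form of \citet{CCK2015} needs only the lower bound \(\min_j\operatorname{Var}(\xi_j)\geq1\) and allows arbitrary correlations.

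Nazarov's inequality then states that for any deterministic shift vector \(\mu\in\mathbb R^{2d}\) and any \(s\in\mathbb R\), \(\eta\geq0\),
\[
\mathbb P\{s<\max_j(\xi_j+\mu_j)\leq s+\eta\}\leq C\eta\sqrt{\log(2\cdot 2d)}.
\]
This is the rectangle-shell bound \(\mathbb P(\xi\leq \mathbf{s}+\eta)-\mathbb P(\xi\leq \mathbf{s})\leq C\eta\sqrt{\log p}\) written for the maximum with \(p=2d\) coordinates. We apply it with \(\mu_j=c_j/\sigma\), \(s=t/\sigma\), and \(\eta=\varepsilon/\sigma\) to obtain
\[
\phi(h)\leq C\sigma^{-1}\varepsilon\sqrt{\log(4d)}\leq C\sigma_0^{-1}\varepsilon\sqrt{\log(4d)}.
\]
The last step uses \(\sigma\geq\sigma_0\). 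Taking expectation over \(H\) and then the supremum over \(a\) and \(t\) proves the lemma.

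The only point requiring care is the precise form of Nazarov's inequality. The version we cite is stated for \(\mathbb P(\xi\leq x+\eta)-\mathbb P(\xi\leq x)\) with arbitrary \(x\in\mathbb R^p\) under unit (or at least \(\underline\sigma\)) marginal variances, with the constant \(\sqrt{2\log p}+2\). Since \(\{\max_j(\xi_j+\mu_j)\leq s\}=\{\xi\leq s\mathbf 1-\mu\}\), the shell event is exactly such a difference. The case \(d=1\), where \(p=2\) and \(\log p>0\), is covered because \(\log(4d)\geq\log4\) absorbs the additive constant into \(C\). No regularity of \(H\) is used, so the bound holds for arbitrary dependence structure within \(H\).
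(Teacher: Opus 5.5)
Your proposal is correct and follows essentially the same route as the paper. The paper likewise conditions on \(H\), writes the event as an intersection of halfspace constraints on the unit-variance Gaussian vector \((\langle u_j,G\rangle)_{j\leq2d}\), applies Nazarov's inequality from \citet{CCK2015}, and averages over \(H\).
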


\begin{proof}
Condition on $H$. The Gaussian vector $(\langle u_j,G\rangle)_{j\leq2d}$ has unit marginal variances, and
\begin{align*}
\{M_a(H+\sigma G)\leq t\}
&=
\bigcap_{j=1}^{2d}
\left\{
\langle u_j,G\rangle
\leq
\frac{t+a_j-\langle u_j,H\rangle}{\sigma}
\right\}.
\end{align*}
Nazarov's inequality \citep{CCK2015} therefore gives
\begin{align*}
\mathbb P\{t<M_a(H+\sigma G)\leq t+\varepsilon\mid H\}
&\leq
C\frac{\varepsilon}{\sigma}\sqrt{\log(4d)}
\leq
C\sigma_0^{-1}\varepsilon\sqrt{\log(4d)}.
\end{align*}
Averaging over $H$ proves the claim.
\end{proof}

\begin{lemma}
\label{lem:convex-conditioning}
If $G\sim N(0,I_r)$ and $C\subseteq\mathbb R^r$ is convex and Borel with $\mathbb P(G\in C)>0$, then
\begin{align*}
\operatorname{Cov}(G\mid G\in C)
&\preceq
I_r.
\end{align*}
\end{lemma}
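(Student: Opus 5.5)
The plan is to reduce the claim to the Brascamp--Lieb inequality for log-concave measures, with a short limiting argument to handle convex sets whose Gaussian restriction is not strictly log-concave in the usual sense.

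\emph{Setup.} Let $\mu$ denote the law of $G$ conditioned on $G\in C$. Its density with respect to Lebesgue measure is proportional to $e^{-V}$ with $V(x)=\|x\|_2^2/2+\iota_C(x)$, where $\iota_C$ is $0$ on $C$ and $+\infty$ off $C$. Because $C$ is convex, $\iota_C$ is convex, so $V$ is $1$-uniformly convex in the sense that $V-\|x\|_2^2/2$ is convex.

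\emph{Core inequality.} Fix a unit vector $\theta\in\mathbb R^r$ and put $f(x)=\langle\theta,x\rangle$. The Brascamp--Lieb variance inequality, in the form valid for measures $e^{-V}$ with $\nabla^2V\succeq I_r$, gives
\[
\operatorname{Var}_\mu(f)\leq\mathbb E_\mu\|\nabla f\|_2^2=1 .
\]
Taking the supremum over unit $\theta$ yields $\operatorname{Cov}(G\mid G\in C)\preceq I_r$. Equivalently, one can invoke the Poincaré inequality with constant $1$ for measures strongly log-concave relative to $\gamma_r$, via the Bakry--\'Emery criterion.

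\emph{Rigorous justification for general $C$.} The Brascamp--Lieb inequality is usually stated for smooth, everywhere-finite $V$, so I will approximate. Replace $\iota_C$ by smooth convex penalties $\psi_k(x)=k\,\phi(\operatorname{dist}(x,\overline C))$, where $\phi$ is a smooth convex function vanishing on $(-\infty,0]$ (followed by mollification if needed). For the measures $\mu_k\propto e^{-\|x\|^2/2-\psi_k}$ the inequality gives $\operatorname{Var}_{\mu_k}(f)\leq1$. As $k\to\infty$, $\mu_k$ converges to $\gamma_r$ restricted to $\overline C$ and normalized, with second moments converging by dominated convergence under the Gaussian envelope. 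The remaining issue is to show $\gamma_r(\partial C)=0$, so that restricting to $\overline C$ agrees with restricting to $C$. This holds because convex sets have Lebesgue-null boundary: if $C$ has nonempty interior, this is standard; if $C$ lies in a proper affine subspace, then $\mathbb P(G\in C)=0$, which is excluded by hypothesis.

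\emph{Alternative route.} If one prefers to avoid Brascamp--Lieb, Caffarelli's contraction theorem gives a $1$-Lipschitz map $T$ with $T_\#\gamma_r=\mu$, and then $\operatorname{Var}_\mu(f)=\operatorname{Var}(f(T(G)))\leq\mathbb E\|\nabla(f\circ T)\|^2\leq1$ by the Gaussian Poincaré inequality.

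\emph{Main obstacle.} The only delicate step is the approximation and limiting argument for an arbitrary convex Borel set. I would handle it as described: pass to $\overline C$ using null boundaries, then use smooth convex penalty approximations with dominated convergence.
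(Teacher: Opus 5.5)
Your proposal is correct and follows the paper's proof. Both write the conditional law as the density proportional to $\exp\{-\|x\|_2^2/2-\iota_C(x)\}$ and apply the Brascamp--Lieb variance inequality to the linear functionals $v^\top x$; your smooth convex penalty approximation, together with $\gamma_r(\partial C)=0$, makes explicit the regularization step that the paper leaves implicit when it applies Brascamp--Lieb to a potential that takes the value $+\infty$.
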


\begin{proof}
The conditional density is proportional to $\exp\{-\|x\|_2^2/2-\iota_C(x)\}$, where $\iota_C$ is convex. The Brascamp-Lieb covariance inequality \citep{brascamp1976extensions} gives, for every $v\in\mathbb R^r$,
\begin{align*}
\operatorname{Var}(v^\top G\mid G\in C)
&\leq
\|v\|_2^2,
\end{align*}
which is equivalent to the matrix inequality.
\end{proof}

\begin{lemma}
\label{lem:convex-cell-contraction}
Let $C_1,\ldots,C_N$ be convex Borel cells partitioning $\mathbb R^r$ modulo a standard-Gaussian-null set, and let $A_1,\ldots,A_N$ be symmetric matrices satisfying $\max_{\ell\leq N}\|A_\ell\|_{\mathrm{op}}\leq A$. Then
\begin{align*}
\left|
\sum_{\ell=1}^N
\mathbb E\left[
\{G^\top A_\ell G-\operatorname{tr}(A_\ell)\}\mathbf1\{G\in C_\ell\}
\right]
\right|
&\leq
4A\log N.
\end{align*}
\end{lemma}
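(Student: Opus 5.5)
The plan is to evaluate each summand exactly through the conditional mean and covariance of $G$ on the cell. Discard cells with $p_\ell:=\mathbb P(G\in C_\ell)=0$; they contribute nothing, and at most $N$ cells remain. For each remaining cell let $\mu_\ell=\mathbb E[G\mid G\in C_\ell]$ and $K_\ell=\operatorname{Cov}(G\mid G\in C_\ell)$. Then
\[
\mathbb E\bigl[\{G^\top A_\ell G-\operatorname{tr}(A_\ell)\}\mathbf 1\{G\in C_\ell\}\bigr]
=p_\ell\bigl[\operatorname{tr}\{A_\ell(K_\ell-I_r)\}+\mu_\ell^\top A_\ell\mu_\ell\bigr].
\]

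\emph{Covariance part.} By Lemma~\ref{lem:convex-conditioning}, $I_r-K_\ell\succeq0$, so $|\operatorname{tr}\{A_\ell(K_\ell-I_r)\}|\le A\operatorname{tr}(I_r-K_\ell)$. The mean part satisfies $|\mu_\ell^\top A_\ell\mu_\ell|\le A\|\mu_\ell\|_2^2$. The absolute value of the total is therefore at most
\[
A\sum_\ell p_\ell\bigl[r-\operatorname{tr}K_\ell+\|\mu_\ell\|_2^2\bigr]
=A\sum_\ell p_\ell\bigl[r-\mathbb E(\|G\|_2^2\mid C_\ell)+2\|\mu_\ell\|_2^2\bigr],
\]
using $\operatorname{tr}K_\ell=\mathbb E(\|G\|_2^2\mid C_\ell)-\|\mu_\ell\|_2^2$. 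The cells partition $\mathbb R^r$ up to a Gaussian-null set, so $\sum_\ell p_\ell\,\mathbb E(\|G\|_2^2\mid C_\ell)=\mathbb E\|G\|_2^2=r=\sum_\ell p_\ell r$. The first two terms cancel exactly, leaving the bound $2A\sum_\ell p_\ell\|\mu_\ell\|_2^2$.

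\emph{Mean part.} This is the step that needs a real estimate: the bound $\|\mu_\ell\|_2^2\le 2\log(1/p_\ell)$. Fix a unit vector $v$ and $\lambda>0$. Jensen's inequality under the conditional law gives
\[
e^{\lambda v^\top\mu_\ell}\le\mathbb E[e^{\lambda v^\top G}\mid C_\ell]\le\frac{\mathbb E e^{\lambda v^\top G}}{p_\ell}=\frac{e^{\lambda^2/2}}{p_\ell},
\]
so $v^\top\mu_\ell\le\lambda/2+\log(1/p_\ell)/\lambda$. Choosing $\lambda=\sqrt{2\log(1/p_\ell)}$ gives $v^\top\mu_\ell\le\sqrt{2\log(1/p_\ell)}$. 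If $p_\ell=1$, then $\mu_\ell=0$ trivially. Taking $v=\mu_\ell/\|\mu_\ell\|_2$ yields the claim.

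\emph{Combining.} We get
\[
\sum_\ell p_\ell\|\mu_\ell\|_2^2\le2\sum_\ell p_\ell\log(1/p_\ell)\le2\log N,
\]
since the Shannon entropy of a distribution on at most $N$ atoms is at most $\log N$ (Jensen applied to the concave logarithm). The total is therefore at most $2A\cdot2\log N=4A\log N$.

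The only delicate point is the exact cancellation of the trace terms. Bounding the covariance part cell by cell would lose a factor of $r$, which is exactly what a rank-free bound must avoid. The sign information $K_\ell\preceq I_r$ from Brascamp--Lieb is what allows $|\operatorname{tr}\{A_\ell(K_\ell-I_r)\}|$ to be dominated by $A\operatorname{tr}(I_r-K_\ell)$, and these nonnegative traces then telescope against the second-moment identity.
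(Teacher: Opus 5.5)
Your proposal is correct and follows the paper's proof essentially step by step. Both arguments use the conditional mean/covariance decomposition, $K_\ell\preceq I_r$ from Lemma~\ref{lem:convex-conditioning}, the trace form of the law of total covariance to replace $\sum_\ell p_\ell\operatorname{tr}(I_r-K_\ell)$ by $\sum_\ell p_\ell\|\mu_\ell\|_2^2$, the bound $\|\mu_\ell\|_2^2\le 2\log(1/p_\ell)$, and the entropy bound $\sum_\ell p_\ell\log(1/p_\ell)\le\log N$. The only difference is that you derive the mean bound explicitly via Jensen and the Gaussian moment generating function, which is exactly the entropy variational inequality the paper cites.
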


\begin{proof}
Discard cells with zero Gaussian probability and write
\begin{align*}
p_\ell
=
\mathbb P(G\in C_\ell),
\quad
\mu_\ell
=
\mathbb E(G\mid G\in C_\ell),
\quad
K_\ell
=
\operatorname{Cov}(G\mid G\in C_\ell).
\end{align*}
Then
\begin{align*}
\mathbb E[(GG^\top-I_r)\mathbf1\{G\in C_\ell\}]
&=
p_\ell(K_\ell-I_r+\mu_\ell\mu_\ell^\top).
\end{align*}
Lemma~\ref{lem:convex-conditioning} gives $K_\ell\preceq I_r$, while the law of total covariance gives
\begin{align*}
\sum_{\ell=1}^Np_\ell\operatorname{tr}(I_r-K_\ell)
&=
\sum_{\ell=1}^Np_\ell\|\mu_\ell\|_2^2.
\end{align*}
The entropy variational inequality gives $\|\mu_\ell\|_2^2\leq2\log(1/p_\ell)$. Therefore, by nuclear/operator norm duality,
\begin{align*}
\left|
\sum_{\ell=1}^N
\mathbb E\left[
\{G^\top A_\ell G-\operatorname{tr}(A_\ell)\}\mathbf1\{G\in C_\ell\}
\right]
\right|
&\leq
4A\sum_{\ell=1}^Np_\ell\log(1/p_\ell)
\leq
4A\log N.
\end{align*}
\end{proof}

\begin{lemma}
\label{lem:selector-derivatives}
For every integer $k\geq1$,
\begin{align*}
D^k\pi_{\beta,a_0,a}(y)[z_1,\ldots,z_k]
&=
\beta^k\pi_{\beta,a_0,a}(y)
\sum_{\mathcal Q\in\mathfrak P_k}
(-1)^{|\mathcal Q|}|\mathcal Q|!
\prod_{B\in\mathcal Q}
\left\{
\sum_{j=1}^{2d}\pi_j(y)\prod_{\ell\in B}\langle u_j,z_\ell\rangle
\right\}.
\end{align*}
Consequently, for a universal constant $C_0>0$,
\begin{align*}
|D^k\pi_{\beta,a_0,a}(y)[z_1,\ldots,z_k]|
&\leq
C_0^k k!\beta^k
\prod_{\ell=1}^k\max_{j\leq2d}|\langle u_j,z_\ell\rangle|.
\end{align*}
\end{lemma}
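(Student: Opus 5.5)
We view $\pi_{\beta,a_0,a}=\pi_0$ as the softmax weight of the constant score $s_0$ among the scores $s_0,\dots,s_{2d}$. Write $p=(\pi_0,\dots,\pi_{2d})$ and note that $\log\pi_0=\beta a_0-\log\sum_{\ell}e^{\beta s_\ell}$. We prove the derivative formula in two steps. First we compute the derivatives of $L(y)=\log\sum_{\ell=0}^{2d}e^{\beta s_\ell(y)}$. Then we pass from $\log\pi_0=\beta a_0-L$ to $\pi_0=\exp(\log\pi_0)$ using Faà di Bruno's formula for the exponential.

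\textbf{Step 1: cumulant structure of $L$.} Since $s_0$ is constant, $Ds_j(y)[z]=\langle u_j,z\rangle$ for $j\ge1$ and $Ds_0[z]=0$, and all higher derivatives of the $s_j$ vanish. Put $\langle u_0,z\rangle:=0$. Then $L(y+\sum_\ell t_\ell z_\ell)$ is the cumulant generating function, in $(t_\ell)$, of the random vector $(\beta\langle u_J,z_\ell\rangle)_\ell$, where $J$ has law $p$ under tilting at $y$. Therefore
\[
D^kL(y)[z_1,\ldots,z_k]=\beta^k\kappa\bigl(\langle u_J,z_1\rangle,\ldots,\langle u_J,z_k\rangle\bigr),
\]
the joint cumulant under $J\sim p(y)$. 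Joint moments are $m_B=\sum_{j=1}^{2d}\pi_j(y)\prod_{\ell\in B}\langle u_j,z_\ell\rangle$; the $j=0$ term drops out because $\langle u_0,\cdot\rangle=0$.

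\textbf{Step 2: Möbius inversion and the exponential.} By Faà di Bruno,
\[
D^k\pi_0=\pi_0\sum_{\mathcal Q\in\mathfrak P_k}\prod_{B\in\mathcal Q}D^{|B|}(-L)[z_B].
\]
Substituting the moment–cumulant formula $\kappa_B=\sum_{\sigma\in\mathfrak P(B)}(-1)^{|\sigma|-1}(|\sigma|-1)!\prod_{C\in\sigma}m_C$ gives a sum over pairs ($\mathcal Q$, refinement $\sigma$). We regroup by the finest partition $\sigma$. The coefficient of $\prod_{C\in\sigma}m_C$ is then
\[
\sum_{\mathcal Q\ge\sigma}\prod_{B\in\mathcal Q}\bigl(-(-1)^{|\sigma_B|-1}(|\sigma_B|-1)!\bigr)
=(-1)^{|\sigma|}\sum_{\pi\in\mathfrak P_{|\sigma|}}\prod_{B\in\pi}(|B|-1)!.
\]
The remaining sum counts permutations of $|\sigma|$ elements by their cycle decomposition, so it equals $|\sigma|!$. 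This yields exactly $\beta^k\pi_0\sum_{\mathcal Q}(-1)^{|\mathcal Q|}|\mathcal Q|!\prod_B m_B$.

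A cleaner alternative for Step 2 avoids the regrouping. Write $\pi_0=1/(1+\Phi)$ with $\Phi=e^{-\beta a_0}\sum_{j\ge1}e^{\beta s_j}$. Then $D^{B}\Phi=\beta^{|B|}e^{-\beta a_0}\sum_j e^{\beta s_j}\prod_{\ell\in B}\langle u_j,z_\ell\rangle$, so $D^B\Phi/(1+\Phi)=\beta^{|B|}m_B$. Faà di Bruno for $g(x)=1/(1+x)$, whose derivatives are $g^{(m)}=(-1)^m m!\,(1+x)^{-m-1}$, then gives the formula immediately. This is the route I would write up; it is also the step needing the most care, namely checking that every block factor is normalized by exactly one power of $(1+\Phi)$.

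\textbf{The bound.} Let $M_\ell=\max_{j\le2d}|\langle u_j,z_\ell\rangle|$. Since $\sum_{j\ge1}\pi_j\le1$ and $\pi_0\le1$, each block satisfies $|m_B|\le\prod_{\ell\in B}M_\ell$. Hence
\[
|D^k\pi_0|\le\beta^k\prod_\ell M_\ell\sum_{m}S(k,m)\,m!,
\]
where $S(k,m)$ are Stirling numbers of the second kind. The ordered Bell (Fubini) number $\sum_m S(k,m)m!$ is at most $k!/(\log 2)^{k+1}\le C_0^k k!$, which completes the estimate.
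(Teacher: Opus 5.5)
Your argument is correct, and the $1/(1+\Phi)$ route you would actually write up is genuinely different from the paper's.

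The paper never introduces $L=\log\sum_\ell e^{\beta s_\ell}$. It reads $\pi_0(y+\sum_\ell t_\ell z_\ell)$ as the mean of $\mathbf 1(J=0)$ under an exponential tilt, so that $D^k\pi_0(y)[z_1,\ldots,z_k]=\beta^k\operatorname{cum}\{\mathbf 1(J=0),\langle u_J,z_1\rangle,\ldots,\langle u_J,z_k\rangle\}$. It then expands this single joint cumulant of $k+1$ variables by the partition formula. Every block containing the indicator together with a score has mean zero because $u_0=0$, so the indicator is forced into a singleton block. This produces the prefactor $\pi_0$ and the coefficient $(-1)^{|\mathcal Q|}|\mathcal Q|!$ at once.

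Your first route (cumulants of $L$, Fa\`a di Bruno for $\exp$, then regrouping via $\sum_{\pi\in\mathfrak P_m}\prod_{B\in\pi}(|B|-1)!=m!$) reaches the same formula. It needs that extra M\"obius/permutation-counting step, which the paper sidesteps by putting the indicator inside the cumulant.

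Your $\pi_0=1/(1+\Phi)$ route is the most elementary of the three. It uses only the multivariate Fa\`a di Bruno formula and $g^{(m)}(x)=(-1)^m m!\,(1+x)^{-m-1}$, and needs no tilting or cumulant facts, which the paper asserts rather than proves. The normalization you flag is immediate from $(1+\Phi)^{-|\mathcal Q|-1}=\pi_0\prod_{B\in\mathcal Q}(1+\Phi)^{-1}$ and $e^{\beta(s_j-a_0)}/(1+\Phi)=\pi_j$. What the paper's version buys instead is the interpretation of the derivative as a joint cumulant of the softmax label $J$, which explains structurally why the $j=0$ atom drops out.

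For the bound, both arguments reduce to the ordered Bell numbers. The paper proves $\sum_{\mathcal Q\in\mathfrak P_k}|\mathcal Q|!\le C_0^k k!$ from the generating function $1/(2-e^z)$ and Cauchy's estimate. In your own Fa\`a di Bruno framework, this is just the $k$th derivative at $z=0$ of $G(e^z-1)$ with $G(x)=1/(1-x)$. You instead quote $k!/(\log 2)^{k+1}$ without justification. The inequality is true, for example from $\sum_{\mathcal Q\in\mathfrak P_k}|\mathcal Q|!=\tfrac12\sum_{m\geq0}m^k2^{-m}$ for $k\geq1$ and a sum--integral comparison with $\int_0^\infty x^k2^{-x}\,dx=k!/(\log 2)^{k+1}$. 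You should include one such line.
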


\begin{proof}
Let $J$ be categorical with probabilities $(\pi_0(y),\ldots,\pi_{2d}(y))$ and set $u_0=0$. Differentiation under the exponential tilt gives
\begin{align*}
D^k\pi_{\beta,a_0,a}(y)[z_1,\ldots,z_k]
&=
\beta^k\operatorname{cum}\{\mathbf1(J=0),\langle u_J,z_1\rangle,\ldots,\langle u_J,z_k\rangle\}.
\end{align*}
In the joint-cumulant partition formula, every block containing both $\mathbf1(J=0)$ and a score has zero expectation because $u_0=0$. Hence the indicator forms a singleton block, and
\begin{align*}
\mathbb E\prod_{\ell\in B}\langle u_J,z_\ell\rangle
&=
\sum_{j=1}^{2d}\pi_j(y)\prod_{\ell\in B}\langle u_j,z_\ell\rangle,
\end{align*}
which gives the identity. Moreover,
\begin{align*}
\left|
\sum_{j=1}^{2d}\pi_j(y)\prod_{\ell\in B}\langle u_j,z_\ell\rangle
\right|
&\leq
\prod_{\ell\in B}\max_{j\leq2d}|\langle u_j,z_\ell\rangle|.
\end{align*}
Lemma~\ref{lem:ordered-Bell-bound} then gives the stated bound.
\end{proof}

\begin{lemma}
\label{lem:Gumbel-cells}
Let $\gamma_0,\ldots,\gamma_{2d}$ be independent standard Gumbel variables and use the smallest-index tie rule. Then
\begin{align*}
\mathbb P\left\{
\operatorname*{arg\,max}_{0\leq j\leq2d}
\left(s_j(y)+\frac{\gamma_j}{\beta}\right)=j
\right\}
&=
\pi_j(y).
\end{align*}
For fixed offsets, every winner cell is convex and Borel, the dummy cell is a closed polyhedron, and the cells partition $\mathbb R^r$ modulo Gaussian-null tie hyperplanes for almost every offset realization.
\end{lemma}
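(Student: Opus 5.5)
The plan is to use the classical Gumbel-max trick, then read off the cell geometry directly from the scores. Let $\gamma_0,\ldots,\gamma_{2d}$ be i.i.d.\ standard Gumbel variables with density $g(x)=e^{-x}\exp(-e^{-x})$ and distribution function $\Lambda(x)=\exp(-e^{-x})$. Then $\beta s_j(y)+\gamma_j$ is Gumbel with location $\beta s_j(y)$, and the argmax is unchanged by multiplying all perturbed scores by $\beta>0$. Ties occur with probability zero because the variables are continuous and independent, so the tie rule is irrelevant for the probability identity. Conditioning on $\gamma_j=x$ gives
\[
\mathbb P(\text{$j$ wins})=\int g(x)\prod_{\ell\neq j}\Lambda\bigl(x+\beta s_j-\beta s_\ell\bigr)\,dx=\int e^{-x}\exp\Bigl(-e^{-x}\sum_{\ell}e^{\beta(s_\ell-s_j)}\Bigr)dx .
\]
The substitution $w=e^{-x}$ evaluates this to $\bigl(\sum_\ell e^{\beta(s_\ell-s_j)}\bigr)^{-1}=\pi_j(y)$.

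For the geometric claims, fix the offsets $\gamma$. Every score $y\mapsto s_j(y)+\gamma_j/\beta$ is affine in $y$; the score $s_0$ is constant. With the smallest-index rule, cell $j$ is
\[
\{y:\ s_j+\gamma_j/\beta> s_\ell+\gamma_\ell/\beta\ (\ell<j),\ \ s_j+\gamma_j/\beta\ge s_\ell+\gamma_\ell/\beta\ (\ell>j)\}.
\]
This is a finite intersection of open and closed halfspaces, or of the whole space or the empty set when the affine difference is constant. Such an intersection is convex and Borel. For the dummy $j=0$ there are no earlier indices, so only non-strict inequalities $a_0+\gamma_0/\beta\ge \langle u_\ell,y\rangle-a_\ell+\gamma_\ell/\beta$ appear, and the cell is a closed polyhedron. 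The cells are disjoint and cover $\mathbb R^r$ by construction of the argmax, so they partition $\mathbb R^r$ exactly.

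The statement asks for a partition only modulo tie hyperplanes. For $j\neq\ell$ the tie set $\{s_j+\gamma_j/\beta=s_\ell+\gamma_\ell/\beta\}$ is a hyperplane, hence Gaussian-null, when $u_j\neq u_\ell$ (with $u_0=0$). When $u_j=u_\ell$, the difference of scores is constant and the tie set is either empty or all of $\mathbb R^r$. It is all of $\mathbb R^r$ only if $\gamma_j-\gamma_\ell$ equals a fixed constant, which has probability zero over the offsets; this handles repeated normals such as $u_{d+j}=-u_k$ coinciding with another $u$. The $u_j$ are unit vectors, so $u_j\neq u_0$ always.

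The main obstacle is this degenerate case of coinciding normals. Excluding it for almost every offset realization is exactly where the phrase ``for almost every offset realization'' is needed. Outside it, the boundaries are finitely many hyperplanes, so tie points form a standard-Gaussian-null set.
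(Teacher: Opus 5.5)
Your proposal is correct and follows essentially the same route as the paper. The paper cites the Gumbel-max formula where you verify it by direct integration, and it describes the smallest-index winner cells as intersections of strict and weak affine halfspaces exactly as you do; your treatment of coinciding normals (a tie set that is empty or all of $\mathbb R^r$, the latter only on a null set of offsets) spells out the paper's one-line remark that, off a probability-zero set of offsets, each tie set is empty or an affine hyperplane.
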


\begin{proof}
The probability identity is the Gumbel-max formula \citep{MaddisonTarlowMinka2014}. Under the smallest-index rule, label $j$ wins exactly on
\begin{align*}
\bigcap_{k<j}\left\{y:s_j(y)+\gamma_j/\beta>s_k(y)+\gamma_k/\beta\right\}
\cap
\bigcap_{k>j}\left\{y:s_j(y)+\gamma_j/\beta\geq s_k(y)+\gamma_k/\beta\right\}.
\end{align*}
This is an intersection of affine halfspaces and is therefore convex and Borel. Except on a probability-zero set of offsets, each tie set is empty or an affine hyperplane, so replacing strict inequalities by weak ones changes the cells only on a Gaussian-null set.
\end{proof}

\begin{lemma}
\label{lem:ordered-Bell-bound}
There exists a universal constant $C_0>0$ such that, for every $k\geq0$,
\begin{align*}
\sum_{\mathcal Q\in\mathfrak P_k}|\mathcal Q|!
&\leq
C_0^k k!.
\end{align*}
\end{lemma}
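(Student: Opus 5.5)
The plan is to bound the number of ordered set partitions of $\{1,\ldots,k\}$, i.e.\ the Fubini numbers $a_k:=\sum_{\mathcal Q\in\mathfrak P_k}|\mathcal Q|!$. Equivalently, $a_k$ counts surjections from $\{1,\ldots,k\}$ onto $\{1,\ldots,m\}$, summed over $m$. The exponential generating function is classical:
\[
\sum_{k\geq0}a_k\frac{x^k}{k!}=\sum_{m\geq0}(e^x-1)^m=\frac{1}{2-e^x},
\]
which is valid for $|x|<\log 2$. I derive it by noting that an ordered partition into $m$ blocks is a sequence of $m$ nonempty sets, and each nonempty set has egf $e^x-1$.

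To obtain the bound, I apply a Cauchy/Markov-type coefficient inequality. Since all coefficients are nonnegative, for any fixed $0<x<\log 2$ we have $a_k x^k/k!\leq (2-e^x)^{-1}$. Choosing $x=1/2$ gives $2-e^{1/2}\approx0.35>0$, so
\[
a_k\leq k!\,2^k/(2-\sqrt e).
\]
Setting $C_0:=2/(2-\sqrt e)\geq 1$ gives $a_k\leq C_0^k k!$ for $k\geq1$, because $2^k/(2-\sqrt e)\leq (2/(2-\sqrt e))^k$ when $k\geq1$. The case $k=0$ is immediate: $\mathfrak P_0$ contains only the empty partition, so $a_0=1\leq C_0^0\,0!$.

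As an alternative that avoids generating functions, I would use the recursion $a_k=\sum_{j=1}^k\binom{k}{j}a_{k-j}$, obtained by choosing the first block. Then induct on the claim $a_k\leq k!\,c^k$ with $c=1/\log 2$: the recursion gives $a_k\leq k!\,c^k\sum_{j\geq1}c^{-j}/j!=k!\,c^k(e^{1/c}-1)=k!\,c^k$. The generating-function route is the shorter of the two.

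I do not expect a serious obstacle. The only points needing care are justifying the egf identity (the counting of sequences of nonempty blocks) and handling $k=0$ separately so that the constant absorbs the prefactor $(2-\sqrt e)^{-1}$.
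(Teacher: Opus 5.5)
Your proposal is correct and follows essentially the same route as the paper: the paper also uses the ordered-Bell exponential generating function $1/(2-e^z)$ together with a Cauchy estimate on a circle of radius less than $\log 2$, and because the coefficients are nonnegative this is equivalent to your real-variable bound $a_k x^k/k!\le(2-e^x)^{-1}$ at $x=1/2$. Your alternative induction via $a_k=\sum_{j\geq1}\binom{k}{j}a_{k-j}$ is also valid and gives the sharp constant $C_0=1/\log 2$ without generating functions.
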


\begin{proof}
The ordered-Bell exponential generating function is
\begin{align*}
\sum_{k=0}^\infty
\left\{\sum_{\mathcal Q\in\mathfrak P_k}|\mathcal Q|!\right\}
\frac{z^k}{k!}
&=
\frac{1}{2-e^z},
\qquad |z|<\log2;
\end{align*}
see \citet[Exercise~20, p.~228]{comtet1974advanced}. Cauchy's estimate on any fixed circle of radius smaller than $\log2$ gives the claimed factorial bound.
\end{proof}

\begin{lemma}
\label{lem:common-base-repeated-pair}
Let $m\geq0$, let $G\sim N(0,I_r)$, and let $H$ be independent of $G$. For $i=1,\ldots,n$, let $Z_i,Z_{i,1},\ldots,Z_{i,m}\in\mathbb R^r$ be jointly distributed random vectors independent of $(H,G)$, without requiring independence among the displayed vectors. Suppose that, for some $K,c_1,\ldots,c_m\geq0$,
\begin{align*}
\sum_{i=1}^n\mathbb E[Z_iZ_i^\top]
\preceq
KI_r,
\quad
\max_{j\leq2d}|\langle u_j,Z_{i,\ell}\rangle|
\leq
c_\ell
\quad\text{a.s.}
\end{align*}
Then, for every $\sigma>0$,
\begin{align}
\left|
\sum_{i=1}^n
\mathbb E D^{m+2}\pi_{\beta,a_0,a}(H+\sigma G)
[Z_i,Z_i,Z_{i,1},\ldots,Z_{i,m}]
\right|\leq
\frac{CK}{\sigma^2}
C_1^m m!(m+1)\beta^m\log(4d)
\prod_{\ell=1}^m c_\ell,
\label{eq:common-base-repeated-pair}
\end{align}
where an empty product equals one and $C,C_1>0$ are universal constants.
\end{lemma}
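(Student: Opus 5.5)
We prove Lemma~\ref{lem:common-base-repeated-pair} by conditioning on everything except the Gaussian component and exploiting the repeated pair $(Z_i,Z_i)$. Fix $i$ and condition on $(H,Z_i,Z_{i,1},\ldots,Z_{i,m})$. These are independent of $G$, so the two Gaussian integrations by parts of Eq.~\eqref{eq:two-IBP-main} (with $\sigma$ in place of $\sqrt{s/2}$) apply in the direction pair $(Z_i,Z_i)$. This gives
\begin{align*}
\mathbb E_G D^{m+2}\pi_{\beta,a_0,a}(H+\sigma G)[Z_i,Z_i,Z_{i,1},\ldots,Z_{i,m}]
=
\sigma^{-2}\mathbb E_G\left[\{(G^\top Z_i)^2-\|Z_i\|_2^2\}\,D^m\pi_{\beta,a_0,a}(H+\sigma G)[Z_{i,1},\ldots,Z_{i,m}]\right].
\end{align*}
This step is justified by smoothness and boundedness of all derivatives of the selector.

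Next we expand $D^m\pi_{\beta,a_0,a}$ with the partition formula of Lemma~\ref{lem:selector-derivatives}. Each term has the form $\beta^m(-1)^{|\mathcal Q|}|\mathcal Q|!\,\pi_0(y)\prod_{B\in\mathcal Q}\{\sum_j\pi_j(y)\prod_{\ell\in B}\langle u_j,Z_{i,\ell}\rangle\}$. We read this product as an expectation over independent categorical labels: $J_0\sim\pi(y)$ for the factor $\pi_0$, and one label $J_B\sim\pi(y)$ per block. By Lemma~\ref{lem:Gumbel-cells}, each label is the argmax with independent Gumbel offsets, so the product equals the expectation over Gumbel offsets of
\[
\mathbf 1\{y\in C^{(0)}_{0}\}\prod_{B}\sum_j \mathbf 1\{y\in C^{(B)}_j\}\prod_{\ell\in B}\langle u_j,Z_{i,\ell}\rangle.
\]
For fixed offsets, the common refinement of the $|\mathcal Q|+1$ winner partitions is a partition of $\mathbb R^r$ into convex cells, at most $(2d+1)^{|\mathcal Q|+1}$ of them. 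On each cell the weight is a constant bounded by $\prod_\ell c_\ell$.

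Within a cell, the quantity $\mathbb E_G[\{(G^\top Z_i)^2-\|Z_i\|^2\}\mathbf 1\{H+\sigma G\in C\}]$, after the affine change of variables, equals $\mathbb E[\{G^\top A G-\operatorname{tr}A\}\mathbf 1\{G\in C'\}]$ with $A=Z_iZ_i^\top$. The sum over $i$ must be taken before applying Lemma~\ref{lem:convex-cell-contraction}, and this is the main obstacle. The operator-norm bound should use $\sum_i\mathbb E[Z_iZ_i^\top]\preceq KI_r$ rather than a per-$i$ bound on $\|Z_i\|$. The cells and weights depend on $Z_{i,\ell}$, which may be correlated with $Z_i$, so we cannot simply average $Z_iZ_i^\top$ first.

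We plan to bypass this as in the proof of Lemma~\ref{lem:convex-cell-contraction}. For a cell with conditional mean $\mu_C$ and covariance $K_C$, the contribution is $p_C\,Z_i^\top(K_C-I+\mu_C\mu_C^\top)Z_i$, which is controlled by $p_C\|Z_i\|_{*}$-type quantities. Concretely, the matrix $M=\sum_C w_C\,p_C(K_C-I+\mu_C\mu_C^\top)$, with weights satisfying $|w_C|\le\prod_\ell c_\ell$, has nuclear norm at most $4\prod_\ell c_\ell\sum_C p_C\log(1/p_C)$. Since $M$ depends on $H$, the offsets, and the $Z_{i,\ell}$, we bound $|Z_i^\top M Z_i|\le\|M\|_{\mathrm{op}}\|Z_i\|^2$ only in an averaged sense. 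The cleanest route we see is to bound the entropy sum $\sum_C p_C\log(1/p_C)\le(|\mathcal Q|+1)\log(2d+1)$ and use the trace duality $\operatorname{tr}(M\,\mathbb E[Z_iZ_i^\top\mid\cdot])$. If the directions $Z_{i,\ell}$ are dependent on $Z_i$, we first pull out the bounded weights, which are constant on cells. What remains is a sum over cells of a signed PSD-type functional, for which $\sum_i Z_i^\top(\cdot)Z_i$ can be replaced by its expectation bound $K$ via Brascamp--Lieb domination $K_C\preceq I$.

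Summing over partitions with Lemma~\ref{lem:ordered-Bell-bound} then gives $\sum_{\mathcal Q}|\mathcal Q|!(|\mathcal Q|+1)\le C_1^m m!(m+1)$. This yields the bound $CK\sigma^{-2}C_1^m m!(m+1)\beta^m\log(4d)\prod_\ell c_\ell$.
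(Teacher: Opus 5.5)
\textbf{There is a gap at the step you call the main obstacle, and it rests on a false premise.} Your opening steps match the paper: the two Gaussian integrations by parts in the repeated pair, the partition formula of Lemma~\ref{lem:selector-derivatives}, the Gumbel representation with $|\mathcal Q|+1$ independent offset arrays, the count $(2d+1)^{|\mathcal Q|+1}$ of joint winner cells, and the ordered-Bell summation.

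The false premise is that the winner cells depend on the $Z_{i,\ell}$. They do not. Each winner event is cut out by the scores $\langle u_j,y\rangle-a_j+\gamma_j/\beta$. So for fixed $H$ and offsets, the cells $C'_{\mathbf j}=\{g:H+\sigma g\in C_{\mathbf j}\}$ are fixed convex sets, as you correctly say one paragraph earlier. The directions enter only through the label-indexed weight $w_i(\mathbf j)=\prod_{B\in\mathcal Q}\prod_{\ell\in B}\langle u_{j_B},Z_{i,\ell}\rangle$.

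Since the direction arrays are independent of $(H,G)$ and of the offsets, you \emph{can} average over them and sum over $i$ first. For each label vector $\mathbf j$ with $j_0=0$, the cell then contributes $\mathbb E_G[\operatorname{tr}\{H_2(G)A_{\mathcal Q,\mathbf j}\}\mathbf 1\{G\in C'_{\mathbf j}\}]$, where $A_{\mathcal Q,\mathbf j}=\sum_i\mathbb E[Z_iZ_i^\top w_i(\mathbf j)]$ is a deterministic symmetric matrix. The correlation between $Z_i$ and the $Z_{i,\ell}$ costs nothing, because the weight is bounded almost surely. For unit $v$, $|v^\top A_{\mathcal Q,\mathbf j}v|\le\prod_\ell c_\ell\sum_i\mathbb E(v^\top Z_i)^2\le K\prod_\ell c_\ell$. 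Lemma~\ref{lem:convex-cell-contraction} then gives $4K\prod_\ell c_\ell\log N_{\mathcal Q}$ directly. This is the paper's argument.

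\textbf{Your substitute does not close as written.} Bounding $|Z_i^\top MZ_i|\le\|M\|_{\mathrm{op}}\|Z_i\|_2^2$ only produces $\sum_i\mathbb E\|Z_i\|_2^2\le rK$, which depends on the dimension. The same happens if you apply trace duality row by row to $\operatorname{tr}(M_i\,\mathbb E[Z_iZ_i^\top\mid\cdot])$; note that your $M$ depends on $i$ through the weights.

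Your last step is to pull out $|w_C|\le\prod_\ell c_\ell$, split $K_C-I_r+\mu_C\mu_C^\top$ into the positive semidefinite pieces $\mu_C\mu_C^\top$ and $I_r-K_C$, and replace $\sum_i\mathbb E[Z_i^\top(\cdot)Z_i]$ by $K\operatorname{tr}(\cdot)$. That is legitimate only when $p_C,\mu_C,K_C$ are independent of $Z_i$. This is exactly the independence you asserted fails, so under your stated premise the step would not follow.

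Once you note that the cells depend only on $H$ and the offsets, your route does go through. It amounts to the proof of Lemma~\ref{lem:convex-cell-contraction} unrolled, with the weights absorbed by absolute values. Forming $A_{\mathcal Q,\mathbf j}$ directly is shorter.
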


\begin{proof}
Condition on $H$ and all direction arrays. Two Gaussian integrations by parts give the exact identity
\begin{align*}
&\mathbb E_G D^{m+2}\pi_{\beta,a_0,a}(H+\sigma G)
[Z_i,Z_i,Z_{i,1},\ldots,Z_{i,m}]\\
&\qquad=
\sigma^{-2}\mathbb E_G\left[
\operatorname{tr}\{H_2(G)Z_iZ_i^\top\}
D^m\pi_{\beta,a_0,a}(H+\sigma G)
[Z_{i,1},\ldots,Z_{i,m}]
\right].
\end{align*}
We then average over the direction arrays and apply the exact formula in Lemma~\ref{lem:selector-derivatives} to the last derivative. Fix a partition $\mathcal Q\in\mathfrak P_m$ and put $M=|\mathcal Q|$. Introduce $M+1$ independent Gumbel arrays: one for the factor $\pi_0$ and one for each block $B\in\mathcal Q$. Conditional on $H$ and these offsets, the joint vector of winner labels partitions the Gaussian variable $G$ into at most
\begin{align*}
N_{\mathcal Q}
&\leq
(2d+1)^{M+1}
\end{align*}
convex Borel cells, modulo Gaussian-null tie boundaries. Convexity follows because each winner event is an intersection of affine halfspaces and intersections of such winner events remain convex.

On a cell with winner labels $j_0$ and $(j_B:B\in\mathcal Q)$, set the matrix weight equal to zero if $j_0\neq0$ and otherwise set
\begin{align*}
A_{\mathcal Q,\boldsymbol j}
&=
\sum_{i=1}^n
\mathbb E\left[
Z_iZ_i^\top
\prod_{B\in\mathcal Q}
\prod_{\ell\in B}
\langle u_{j_B},Z_{i,\ell}\rangle
\right],
\end{align*}
with $u_0=0$. For every unit vector $v$,
\begin{align*}
|v^\top A_{\mathcal Q,\boldsymbol j}v|
&\leq
\left(\prod_{\ell=1}^m c_\ell\right)
\sum_{i=1}^n\mathbb E(v^\top Z_i)^2
\leq
K\prod_{\ell=1}^m c_\ell.
\end{align*}
Hence $\|A_{\mathcal Q,\boldsymbol j}\|_{\mathrm{op}}\leq K\prod_\ell c_\ell$. Lemma~\ref{lem:convex-cell-contraction}, applied conditionally to the joint-winner partition, bounds the contribution associated with $\mathcal Q$ by
\begin{align*}
4K\left(\prod_{\ell=1}^m c_\ell\right)
\log N_{\mathcal Q}
&\leq
CK(M+1)\log(4d)
\prod_{\ell=1}^m c_\ell.
\end{align*}
The Gumbel-max formula shows that averaging over the offsets reproduces exactly the product of softmax probabilities in Lemma~\ref{lem:selector-derivatives}. Multiplying by $\beta^mM!$, summing over $\mathcal Q$, and using
\begin{align*}
\sum_{\mathcal Q\in\mathfrak P_m}(|\mathcal Q|+1)|\mathcal Q|!
&\leq
(m+1)C_1^m m!
\end{align*}
from Lemma~\ref{lem:ordered-Bell-bound} proves Eq.~\eqref{eq:common-base-repeated-pair}. All conditioning and averaging steps are justified by the bounded selector derivatives and the displayed covariance and projection bounds.
\end{proof}

\section{Supporting results for Section~\ref{sec:S-to-T}}
\label{sec:S-to-T-appendix}

\begin{lemma}
\label{lem:path-moments}
For the variables $S$, $T$, and $S_s$ in Section~{\color{black}\ref{sec:S-to-T}},
\begin{align*}
\mathbb E[S]
=\mathbb E[T]=0,
\quad
\operatorname{Cov}(S)
=\operatorname{Cov}(T)=I_r,
\quad
\mathbb E[S^{\otimes3}]
=\mathbb E[T^{\otimes3}],
\end{align*}
and, for every $s\in[0,1]$,
\begin{align*}
\mathbb E[S_s]
=0,
\quad
\operatorname{Cov}(S_s)
=I_r,
\quad
\mathbb E[S_s^{\otimes3}]
=\mathbb E[S^{\otimes3}].
\end{align*}
\end{lemma}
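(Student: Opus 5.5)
The plan is a direct moment computation based on independence. Throughout I use $\mathbb E[Y_i]=0$ (the $\widetilde X_i$ are recentered and $U^+$ is linear) and $\sum_i\mathbb E[Y_iY_i^\top]=I_r$ from Section~\ref{subsec:support-restriction}. Write $m_3:=\sum_{i=1}^n\mathbb E[Y_i^{\otimes3}]$.

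\textbf{Step 1: moments of $S$.} Since the $Y_i$ are independent and centered, $\mathbb E[S]=0$ and $\operatorname{Cov}(S)=\sum_i\mathbb E[Y_iY_i^\top]=I_r$. For the third tensor, expand $S^{\otimes3}=\sum_{i,j,k}Y_i\otimes Y_j\otimes Y_k$. Any term with an index appearing exactly once has zero mean by independence and centering. Hence only $i=j=k$ survives, and $\mathbb E[S^{\otimes3}]=m_3$.

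\textbf{Step 2: the general path $S_s$.} I will treat $S_s$ with a general scaling: an independent Gaussian term $\sigma G$ plus $c\sum_iB_iY_i$, where the $B_i$ are Bernoulli$(p)$ variables independent of the $Y_i$.
\begin{itemize}
\item Each summand $B_iY_i$ is centered and independent across $i$, and odd Gaussian moments vanish. So the mean is zero.
\item The covariance is $\sigma^2I_r+c^2p\,I_r$.
\item The third moment is $c^3p\,m_3$. Cross terms between $G$ and the $Y$-part vanish, because each involves either an odd Gaussian moment or a single centered $Y$-factor.
\end{itemize}

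\textbf{Step 3: substitute the parameters of $S_s$.} Here $\sigma^2=s/2$, $c=(1-s/2)^{-1}$ and $p=(1-s/2)^3$.
\begin{itemize}
\item Covariance: $s/2+(1-s/2)=1$, so $\operatorname{Cov}(S_s)=I_r$.
\item Third moment: $c^3p=1$, so $\mathbb E[S_s^{\otimes3}]=m_3$.
\end{itemize}
These are the two identities to verify, and both hold for every $s\in[0,1]$.

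\textbf{Step 4: the endpoint $T$.} For $T$ take $\sigma^2=1/2$, $c=2$ and $p=1/8$, with $Y_i'$ in place of $Y_i$. Since $Y_i'$ has the same law as $Y_i$, the covariance is $1/2+4/8=1$ and the third-moment coefficient is $8/8=1$. This gives $\mathbb E[T]=0$, $\operatorname{Cov}(T)=I_r$ and $\mathbb E[T^{\otimes3}]=m_3=\mathbb E[S^{\otimes3}]$.

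This is consistent with $S_1\stackrel d=T$: at $s=1$ the Bernoulli parameter is $(1/2)^3=1/8$ and the scaling is $2$.

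\textbf{Main obstacle.} The only point needing care is the bookkeeping of the mixed terms in the third tensor between the Gaussian part and the Bernoulli–$Y$ part. Each such term is a product of independent factors, and at least one factor is an odd Gaussian moment or a lone centered $Y_i$, so it vanishes. All moments are finite because $Y_i$ is a linear image of the bounded vector $\widetilde X_i$.
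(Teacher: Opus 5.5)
Your proposal is correct and is essentially the paper's argument. The paper likewise combines $\mathbb E(2D_i)^2=1/2$, $\mathbb E(2D_i)^3=1$, $\mathbb E\{B_i(s)/(1-s/2)\}^2=1-s/2$ and $\mathbb E\{B_i(s)/(1-s/2)\}^3=1$ with centering and independence, which is exactly your $(\sigma,c,p)$ computation specialized to $S_s$ and $T$; your unified parametrization and explicit treatment of the Gaussian--$Y$ mixed terms only make the bookkeeping a bit more explicit.
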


\begin{proof}
Since $\mathbb E(2D_i)^2=1/2$ and $\mathbb E(2D_i)^3=1$,
\begin{align*}
\operatorname{Cov}(T)
&=
\frac12I_r+\frac12\sum_{i=1}^n\mathbb E[Y_iY_i^\top]
=
I_r,
\\
\mathbb E[T^{\otimes3}]
&=
\sum_{i=1}^n\mathbb E[(2D_iY_i')^{\otimes3}]
=
\sum_{i=1}^n\mathbb E[Y_i^{\otimes3}]
=
\mathbb E[S^{\otimes3}].
\end{align*}
For the path, $\mathbb E[B_i(s)]=(1-s/2)^3$, and hence
\begin{align*}
\mathbb E\left\{\frac{B_i(s)}{1-s/2}\right\}^2
=
1-s/2,
\quad
\mathbb E\left\{\frac{B_i(s)}{1-s/2}\right\}^3
=
1.
\end{align*}
Therefore,
\begin{align*}
\operatorname{Cov}(S_s)
=
\frac{s}{2}I_r+(1-s/2)\sum_{i=1}^n\mathbb E[Y_iY_i^\top]
=
I_r,
\quad
\mathbb E[S_s^{\otimes3}]
=
\sum_{i=1}^n\mathbb E[Y_i^{\otimes3}].
\end{align*}
Centering and independence eliminate all cross-row third moments, proving the result.
\end{proof}

\begin{lemma}
\label{lem:exact-Bernoulli-generator}
Suppose that $\mathbb E\|Y_i\|_2^4<\infty$, for $i=1,\ldots,n$. For every \(f\in C_b^4(\mathbb R^r)\) and \(s\in(0,1)\),
the derivative of \(\mathbb E[f(S_s)]\) is the right-hand side of
Eq.~\eqref{eq:main-Bernoulli-generator}, with
\(\pi_{\beta,a_0,a}\) replaced by \(f\). Moreover, the map $s\longmapsto \mathbb E[f(S_s)]$ is Lipschitz, and hence absolutely continuous, on \([0,1]\).
The right-hand side of the derivative identity is bounded on
\((0,1)\), and therefore belongs to \(L^1(0,1)\). Consequently,
\begin{align}
\mathbb E[f(S_1)]-\mathbb E[f(S_0)]
&=
\int_0^1
\frac{d}{ds}\mathbb E[f(S_s)]\,ds.
\label{eq:integrated-Bernoulli-generator}
\end{align}
\end{lemma}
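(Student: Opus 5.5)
The plan is to differentiate \(s\mapsto\mathbb E f(S_s)\) after integrating out the Bernoulli selectors and the Gaussian explicitly, so that no pathwise derivative of the discontinuous indicators \(B_i(s)\) is ever needed. Write \(a=a(s)=1-s/2\) and \(p=p(s)=a^3=\mathbb P\{B_i(s)=1\}\). By independence of \(U_1,\dots,U_n\), \(G\) and \((Y_i)\),
\[
\mathbb E f(S_s)=\sum_{I\subseteq[n]}p^{|I|}(1-p)^{n-|I|}\,\mathbb E f\Bigl(\sqrt{s/2}\,G+a^{-1}\textstyle\sum_{i\in I}Y_i\Bigr).
\]
This is a finite sum of smooth functions of \((p,a^{-1})\) multiplied by Gaussian-smoothed expectations. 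Hence for \(s\in(0,1)\) we may differentiate under the expectation. The justification is dominated convergence, using that \(f\in C_b^4\) and \(\mathbb E\|Y_i\|_2^4<\infty\).

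The derivative then splits into three kinds of contributions.
\begin{enumerate}
\item The Gaussian part gives \(\tfrac14\mathbb E\Delta f(S_s)\), by the heat-equation identity \(\tfrac{d}{ds}\mathbb E g(x+\sqrt{s/2}\,G)=\tfrac14\mathbb E\Delta g(x+\sqrt{s/2}\,G)\).
\item For each \(i\), write \(\mathbb E f(R_{i,s}+\eta_i)=(1-p)\mathbb E f(R_{i,s})+p\,\mathbb E f(R_{i,s}+Y_i/a)\), with \(R_{i,s}\) frozen. The \(s\)-dependence inside \(R_{i,s}\) is accounted for by the other indices, by the product rule over the factors of the product law. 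This gives
\[
p'(s)\,\mathbb E\bigl[f(R_{i,s}+Y_i/a)-f(R_{i,s})\bigr]+p\,\tfrac{d(1/a)}{ds}\,\mathbb E\,Df(R_{i,s}+Y_i/a)[Y_i],
\]
with \(p'=-\tfrac32a^2\) and \(\tfrac{d(1/a)}{ds}=\tfrac1{2a^2}\).
\item Expand both pieces around \(R_{i,s}\) in \(z=Y_i/a\). Use Taylor's formula with integral remainder: to fourth order for the difference of \(f\) values, and to third order for \(Df[\cdot]\). Then collect the terms by order.
\end{enumerate}

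Order one vanishes because \(\mathbb E Y_i=0\) and \(Y_i\) is independent of \(R_{i,s}\). At order two the coefficient is \(p'a^2/(2a^2)\cdot a^{-2}+p/(2a^2)\cdot a^{-1}\), up to the normalisation of \(z\). Checking this yields the combination \(-\tfrac14\sum_i\mathbb E D^2f(R_{i,s})[Y_i,Y_i]\).

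To cancel it against \(\tfrac14\mathbb E\Delta f(S_s)\), I will use \(\sum_i\mathbb E[Y_iY_i^\top]=I_r\) with the independent copy \(Y_i'\). This rewrites \(\Delta f(S_s)\) as \(\sum_i\mathbb E D^2f(R_{i,s}+\eta_i)[Y_i',Y_i']\). Expanding \(\eta_i\) around \(R_{i,s}\), now averaged over \(B_i\), leaves a mixed remainder of the form \(D^4f[\,Y_i/a,Y_i/a,Y_i'/a,Y_i'/a\,]\) with weight \((1-t)\). This is exactly the source of the \(f^{(2)}\) term in Eq.~\eqref{eq:main-Bernoulli-generator}. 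The prefactor \((1-s/2)^5/4\) should come out of \(p\cdot a^{2}\) together with the \(1/4\), after the normalisations by \(a\) are tracked. The odd mixed term \(D^3f[Y_i,Y_i',Y_i']\) vanishes because \(\mathbb E Y_i=0\).

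At order three the coefficients are \(p'/(6a^3)\) and \(p/(2a^2)\cdot\tfrac1{2a^2}\) (up to the \(a\) normalisation). These cancel precisely because \(\mathbb E\{B_i/a\}^3=1\) is constant in \(s\); this is the same computation as in Lemma~\ref{lem:path-moments}. The surviving fourth-order remainders combine into the kernel \(t(1-t)^2\), with coefficient \((1-s/2)^2/4\). This is a Beta-integral bookkeeping step: \(\tfrac16(1-t)^3\) comes from the Taylor remainder of \(f\), and \(\tfrac12(1-t)^2\) from the remainder of \(Df\), and the two merge after weighting by \(p'\) and \(p/(2a^2)\).

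The main obstacle is this last coefficient matching. I would organise it by writing every remainder as \(\int_0^1 w(t)\,D^4f(R_{i,s}+tz)[z^{\otimes4}]\,dt\) with explicit polynomial \(w\). I would then verify \(w(t)\propto t(1-t)^2\) by comparing moments of the weights against the requirement that the order-\(\le3\) terms cancel.

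For the regularity claims, the right-hand side is bounded by \(C\|D^4f\|_\infty\sum_i\mathbb E\|Y_i\|_2^4\), uniformly in \(s\in(0,1)\). In addition \(s\mapsto\mathbb E f(S_s)\) is continuous on \([0,1]\): the Gaussian part is continuous at \(s=0\) by dominated convergence, and the Bernoulli part is polynomial in \((p,a^{-1})\). A continuous function on \([0,1]\) that is differentiable on \((0,1)\) with bounded derivative is Lipschitz by the mean value theorem, hence absolutely continuous. Eq.~\eqref{eq:integrated-Bernoulli-generator} then follows from the fundamental theorem of calculus for absolutely continuous functions.
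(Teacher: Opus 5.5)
Your proposal is correct and follows the paper's proof essentially step by step. You use the same three-part derivative: the heat term $\tfrac14\mathbb E\Delta f(S_s)$, the Bernoulli-probability term with $p'=-\tfrac32 a^2$, and the scale term with $\tfrac{d(1/a)}{ds}=\tfrac1{2a^2}$. You rewrite the Laplacian through the independent copy $Y_i'$ and conditioning on $B_i(s)$, and the same Taylor cancellations leave the kernels $t(1-t)^2$ and $(1-t)$ with prefactors $(1-s/2)^2/4$ and $(1-s/2)^5/4$. The only cosmetic difference is that you get endpoint continuity from the finite mixture over subsets $I\subseteq[n]$, whereas the paper uses an almost-sure coupling through the uniforms $U_i$; both are valid.
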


\begin{proof}
Differentiating the Gaussian variance $s/2$, the Bernoulli probability $(1-s/2)^3$, and the scale $(1-s/2)^{-1}$ gives
\begin{align}
\frac{d}{ds}\mathbb E[f(S_s)]
={}&
\frac14\mathbb E[\Delta f(S_s)]
+
\frac{(1-s/2)^2}{2}\sum_{i=1}^n\mathbb E\left[Df\left(R_{i,s}+\frac{Y_i}{1-s/2}\right)\left[\frac{Y_i}{1-s/2}\right]\right]
\nonumber\\
&-
\frac{3(1-s/2)^2}{2}\sum_{i=1}^n\mathbb E\left[f\left(R_{i,s}+\frac{Y_i}{1-s/2}\right)-f(R_{i,s})\right].
\label{eq:raw-Bernoulli-derivative}
\end{align}
Let $Y_i'$ be an independent copy of $Y_i$. Since $\sum_i\mathbb E[Y_iY_i^\top]=I_r$, conditioning on $B_i(s)$ and factoring $(1-s/2)^2/2$ shows that the $i$th contribution in Eq.~\eqref{eq:raw-Bernoulli-derivative} equals
\begin{align}
{}&\frac{1-(1-s/2)^3}{2}\mathbb E D^2f(R_{i,s})\left[\frac{Y_i'}{1-s/2},\frac{Y_i'}{1-s/2}\right]
\nonumber\\
&+\frac{(1-s/2)^3}{2}\mathbb E D^2f\left(R_{i,s}+\frac{Y_i}{1-s/2}\right)\left[\frac{Y_i'}{1-s/2},\frac{Y_i'}{1-s/2}\right]
\nonumber\\
&+\mathbb E Df\left(R_{i,s}+\frac{Y_i}{1-s/2}\right)\left[\frac{Y_i}{1-s/2}\right]
-3\mathbb E\left[f\left(R_{i,s}+\frac{Y_i}{1-s/2}\right)-f(R_{i,s})\right].
\label{eq:row-Bernoulli-derivative}
\end{align}
Taylor's formula gives
\begin{align*}
&D^2f(x+v)[w,w]
={}
D^2f(x)[w,w]+D^3f(x)[v,w,w]
+
\int_0^1(1-t)D^4f(x+tv)[v,v,w,w]dt,
\\
&Df(x+v)[v]
={}
Df(x)[v]+D^2f(x)[v,v]+\frac12D^3f(x)[v,v,v]
+
\frac12\int_0^1(1-t)^2D^4f(x+tv)[v,v,v,v]dt,
\\
&f(x+v)-f(x)
={}
Df(x)[v]+\frac12D^2f(x)[v,v]+\frac16D^3f(x)[v,v,v]
+
\frac16\int_0^1(1-t)^3D^4f(x+tv)[v,v,v,v]dt.
\end{align*}
The linear terms vanish after expectation because $Y_i$ is centered and independent of $R_{i,s}$. The mixed cubic term vanishes because $Y_i$ and $Y_i'$ are independent and centered. The quadratic coefficient satisfies
\begin{align*}
\frac{1-(1-s/2)^3}{2}+\frac{(1-s/2)^3}{2}+1-\frac32
&=0,
\end{align*}
and the pure cubic coefficient satisfies
\begin{align*}
\frac12-\frac{3}{6}
&=0.
\end{align*}
Thus only the fourth-order integral remainders remain. Since
\begin{align*}
(1-t)^2-(1-t)^3
&=t(1-t)^2,
\end{align*}
collecting their coefficients gives Eq.~\eqref{eq:main-Bernoulli-generator}.

It remains to justify the endpoint integration. Since
\(1-s/2\geq1/2\) for every \(s\in[0,1]\), the final fourth-order
generator identity and the boundedness of \(D^4f\) imply
\begin{align}
\sup_{0<s<1}
\left|
\frac{d}{ds}\mathbb E[f(S_s)]
\right|
&\leq
C
\sup_{x\in\mathbb R^r}
\|D^4f(x)\|_{\mathrm{op}}
\sum_{i=1}^n
\left\{
\mathbb E\|Y_i\|_2^4
+
\bigl(\mathbb E\|Y_i\|_2^2\bigr)^2
\right\}
\nonumber\\
&<\infty.
\label{eq:Bernoulli-generator-uniform-bound}
\end{align}
Indeed, the pure fourth-order terms are bounded by a constant
multiple of \(\mathbb E\|Y_i\|_2^4\), while the mixed terms involving
an independent copy \(Y_i'\) are bounded by
\[
\mathbb E\bigl[\|Y_i\|_2^2\|Y_i'\|_2^2\bigr]
=
\bigl(\mathbb E\|Y_i\|_2^2\bigr)^2.
\]

Hence, by the mean value theorem, for every \(0<s<t<1\),
\[
\left|
\mathbb E[f(S_t)]-\mathbb E[f(S_s)]
\right|
\leq
M_f|t-s|,
\]
where \(M_f<\infty\) denotes the right-hand side of
Eq.~\eqref{eq:Bernoulli-generator-uniform-bound}.

To handle the endpoints, realize the entire Bernoulli path on one
probability space. Let \(U_1,\ldots,U_n\) be independent
\(\operatorname{Uniform}(0,1)\) random variables, jointly independent
of \(G\), \((Y_i)_{i=1}^n\), and \((Y_i')_{i=1}^n\), and put
\[
B_i(s)
=
\mathbf 1\!\left\{
U_i\leq(1-s/2)^3
\right\},
\qquad
0\leq s\leq1.
\] For each fixed \(s_0\in[0,1]\),
\[
B_i(s)\longrightarrow B_i(s_0)
\quad\text{almost surely as }s\to s_0,
\]
except on the null event
\(\{U_i=(1-s_0/2)^3\}\). Since all remaining coefficients in
\(S_s\) are continuous in \(s\), it follows that
\[
S_s\longrightarrow S_{s_0}
\quad\text{almost surely as }s\to s_0.
\]
Because \(f\) is bounded and continuous, dominated convergence gives
\[
\mathbb E[f(S_s)]
\longrightarrow
\mathbb E[f(S_{s_0})].
\]
Thus \(s\mapsto\mathbb E[f(S_s)]\) is continuous at \(0\) and \(1\).

Letting \(s\downarrow0\) and \(t\uparrow1\) in the preceding
Lipschitz inequality shows that
\(s\mapsto\mathbb E[f(S_s)]\) is Lipschitz on all of \([0,1]\).
It is therefore absolutely continuous, and the fundamental theorem
of calculus yields
Eq.~\eqref{eq:integrated-Bernoulli-generator}.
\end{proof}

\begin{lemma}
\label{lem:finite-ghost-completion}
Let \(K\geq1\) be an integer. Let \(R\) be a random vector, let
\(\eta\) be a centered random vector, and let $\eta^{(0)},\eta^{(1)},\ldots,\eta^{(K)}$ be jointly independent copies of \(\eta\), with the entire copy family
jointly independent of \(R\). Define $Pf(x)
:=
\mathbb E[f(x+\eta)]$ and $H:=R+\eta^{(0)}$.

\begin{enumerate}[label=(\roman*)]

\item
For every bounded Borel-measurable function \(f\),
\begin{align}
\mathbb E[f(R)]
&=
\sum_{k=0}^{K-1}
\mathbb E\left[(I-P)^k f(H)\right]
+
\mathbb E\left[(I-P)^K f(R)\right],
\label{eq:finite-ghost-completion}
\end{align}
where \((I-P)^0=I\).

\item
Fix \(k\in\{1,\ldots,K\}\). Suppose that
\(f\in C^{2k}(\mathbb R^r)\), that all derivatives of \(f\) through
order \(2k\) are bounded, and that $\mathbb E\|\eta\|_2^2<\infty$. Then, for every \(x\in\mathbb R^r\),
\begin{align}
(I-P)^k f(x)
={}&
(-1)^k
\mathbb E
\int_{[0,1]^k}
\prod_{\ell=1}^k(1-v_\ell)
\nonumber\\
&\quad\times
D^{2k}f\left(
x+\sum_{\ell=1}^k v_\ell\eta^{(\ell)}
\right)
\left[
(\eta^{(1)})^{\otimes2},
\ldots,
(\eta^{(k)})^{\otimes2}
\right]
\,dv_1\cdots dv_k.
\label{eq:differentiated-ghost-completion}
\end{align}
The random integral in
\eqref{eq:differentiated-ghost-completion} is absolutely integrable.

\end{enumerate}
\end{lemma}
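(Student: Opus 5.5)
Part (i) is a telescoping identity and I would prove it by induction on \(K\), using that \(P\) is an averaging operator and that \(R+\eta^{(0)}=H\). The key observation is that for any bounded Borel \(g\), independence of \(\eta^{(0)}\) from \(R\) together with Fubini gives
\[
\mathbb E[g(H)]=\mathbb E[g(R+\eta^{(0)})]=\mathbb E[(Pg)(R)].
\]
Hence
\[
\mathbb E[g(R)]=\mathbb E[g(H)]+\mathbb E[((I-P)g)(R)].
\]
Apply this with \(g=(I-P)^kf\), which is bounded Borel because \(P\) maps bounded Borel functions to bounded Borel functions with \(\|Pg\|_\infty\le\|g\|_\infty\), so \(\|(I-P)^kf\|_\infty\le 2^k\|f\|_\infty\). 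This yields
\[
\mathbb E[(I-P)^kf(R)]=\mathbb E[(I-P)^kf(H)]+\mathbb E[(I-P)^{k+1}f(R)].
\]
Summing over \(k=0,\ldots,K-1\) telescopes to \eqref{eq:finite-ghost-completion}. Only one ghost copy \(\eta^{(0)}\) is needed here; the others enter part (ii).

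For part (ii), I would first treat \(k=1\). For \(g\in C^2\) with bounded derivatives, Taylor's formula with integral remainder gives
\[
g(x+\eta)-g(x)=Dg(x)[\eta]+\int_0^1(1-v)D^2g(x+v\eta)[\eta,\eta]\,dv.
\]
Taking expectation over \(\eta\), the linear term vanishes because \(\eta\) is centered and \(\mathbb E\|\eta\|_2<\infty\). The remainder is dominated by \(\tfrac12\sup\|D^2g\|_{\mathrm{op}}\|\eta\|_2^2\), which is integrable. Thus
\[
(I-P)g(x)=-\mathbb E\int_0^1(1-v)D^2g(x+v\eta^{(1)})[\eta^{(1)},\eta^{(1)}]\,dv.
\]
I then induct on \(k\). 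Write \((I-P)^kf=(I-P)\bigl[(I-P)^{k-1}f\bigr]\) and apply the inductive representation to \(g=(I-P)^{k-1}f\), using copies \(\eta^{(2)},\ldots,\eta^{(k)}\). Then apply the \(k=1\) step to the outer \((I-P)\), using \(\eta^{(1)}\).

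The main obstacle is justifying that the inductive representation of \(g\) is \(C^2\) with bounded derivatives and that its second derivative may be taken inside the expectation and the \(v\)-integrals. I would handle this by dominated convergence. Differentiating the integrand twice in \(x\) gives \(D^{2k}f(\cdot)[\,\cdot\,,(\eta^{(2)})^{\otimes2},\ldots]\), which is continuous in \(x\). It is bounded by \(\sup\|D^{2k}f\|_{\mathrm{op}}\prod_{\ell\ge2}\|\eta^{(\ell)}\|_2^2\). That bound is integrable because the copies are independent, so its expectation factors as \(\prod(\mathbb E\|\eta\|_2^2)\). This also supplies the absolute integrability claim.

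Combining the two steps, the outer Taylor expansion produces the extra factor \((1-v_1)\) and the extra argument \((\eta^{(1)})^{\otimes2}\). The shift \(v_1\eta^{(1)}\) adds to \(x\), giving exactly \eqref{eq:differentiated-ghost-completion} with sign \((-1)^k\). Throughout, Fubini swaps the order of the \(v\)-integrals and the expectations, again licensed by the same product bound.
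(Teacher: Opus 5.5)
Your proposal is correct and follows essentially the same route as the paper. Part (i) is the same telescoping argument, namely the identity $\mathbb E[g(H)]=\mathbb E[(Pg)(R)]$ applied to $g=(I-P)^kf$. Part (ii) is the same induction on the centered one-step second-order Taylor identity. The only cosmetic difference is which side the new ghost enters. You Taylor-expand $(I-P)^{k-1}f$ with the new copy $\eta^{(1)}$, which requires differentiating its integral representation twice under the expectation. The paper instead pushes the extra $I-P$ onto the integrand of the order-$k$ representation, using the fresh copy $\eta^{(k+1)}$. Your dominated-convergence and product-moment remarks spell out the justification that the paper only asserts.
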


\begin{proof}
Let \(\mu_-\) be the law of \(R\), and let
\(\mu=\mu_-P\) be the law of \(H\). Since $P=I-(I-P)$, for every \(k\geq0\),
\begin{align*}
\mu(I-P)^k f
&=
\mu_-P(I-P)^k f\\
&=
\mu_-(I-P)^k f
-
\mu_-(I-P)^{k+1}f.
\end{align*}
Summing this identity over \(k=0,\ldots,K-1\) gives
\eqref{eq:finite-ghost-completion}.

For part (ii), centering of \(\eta\) and the second-order Taylor
formula give
\begin{align*}
(I-P)f(x)
&=
f(x)-\mathbb E[f(x+\eta)]\\
&=
-\mathbb E\int_0^1
(1-v)
D^2f(x+v\eta)[\eta,\eta]
\,dv.
\end{align*}
This proves the formula for \(k=1\).

Suppose that the formula holds for some \(k<K\). Apply \(I-P\)
once more, using the independent copy \(\eta^{(k+1)}\), to the
function $x\longmapsto
D^{2k}f(x)
\left[
(\eta^{(1)})^{\otimes2},
\ldots,
(\eta^{(k)})^{\otimes2}
\right]$. The preceding second-order identity then gives the formula at order
\(k+1\). Induction proves
\eqref{eq:differentiated-ghost-completion}.

Finally,
\begin{align*}
&\mathbb E
\int_{[0,1]^k}
\prod_{\ell=1}^k(1-v_\ell)
\left|
D^{2k}f\left(
x+\sum_{\ell=1}^k v_\ell\eta^{(\ell)}
\right)
\left[
(\eta^{(1)})^{\otimes2},
\ldots,
(\eta^{(k)})^{\otimes2}
\right]
\right|
\,dv
\\
&\leq
2^{-k}
\sup_{y\in\mathbb R^r}
\|D^{2k}f(y)\|_{\mathrm{op}}
\prod_{\ell=1}^k
\mathbb E\|\eta^{(\ell)}\|_2^2
\\
&=
2^{-k}
\sup_{y\in\mathbb R^r}
\|D^{2k}f(y)\|_{\mathrm{op}}
\left(\mathbb E\|\eta\|_2^2\right)^k
<\infty.
\end{align*}
Thus all applications of Fubini and differentiation are justified.
\end{proof}

\begin{definition}
\label{def:common-terminal-contributions}

\begin{enumerate}
    \item Fix the data of Proposition~\ref{prop:S-to-T}, an even integer
\(Q\geq8\), and \(s\in(0,1]\). Put $a_s:=1-s/2$ and $K_Q:=(Q-4)/2$, and define
\[
c_1(s):=\frac{a_s^2}{4},
\qquad
c_2(s):=\frac{a_s^5}{4},
\]
together with $w_1(t):=t(1-t)^2$ and $w_2(t):=1-t$.

\item For each \(i\), put $\eta_i:=B_i(s)Y_i/a_s$ and $\mu_i:=\mathcal L(\eta_i)$. On an enlarged probability space, let $\eta_i^{(0)},\eta_i^{(1)},\ldots,\eta_i^{(K_Q)}$ have common law \(\mu_i\).  The full ghost family is taken to be
mutually independent and independent of the original variables $G$ and $\{B_j(s),Y_j,Y_j':1\leq j\leq n\}$.

\item The operator \(P_i\) convolves only the spatial variable.  More
precisely, for a measurable function \(g=g(x;y,y')\), define
\[
(P_i g)(x;y,y')
:=
\int_{\mathbb R^r} g(x+z;y,y')\,\mu_i(dz),
\]
whenever the integral exists.  Thus \(y,y'\) are held fixed when
\(P_i\) is applied; equivalently, \(P_i\) averages only over a fresh
ghost increment having law \(\mu_i\).

\item For \(t\in[0,1]\), define
\begin{align*}
f_{i,t}^{(1)}(x;y,y')
&:=
D^4\pi_{\beta,a_0,a}
\left(x+t\frac{y}{a_s}\right)
\left[
\left(\frac{y}{a_s}\right)^{\otimes4}
\right],
\\
f_{i,t}^{(2)}(x;y,y')
&:=
D^4\pi_{\beta,a_0,a}
\left(x+t\frac{y}{a_s}\right)
\left[
\frac{y}{a_s},
\frac{y}{a_s},
\frac{y'}{a_s},
\frac{y'}{a_s}
\right].
\end{align*}
In subsequent formulas these functions are evaluated at
\((y,y')=(Y_i,Y_i')\); these two arguments may be suppressed when
there is no ambiguity.

\item For \(k\in\{0,\ldots,K_Q-1\}\) and
\(v=(v_1,\ldots,v_k)\in[0,1]^k\), set
\[
L_{i,k}(t,v)
:=
t\frac{Y_i}{a_s}
+
\sum_{\ell=1}^k
v_\ell\eta_i^{(\ell)}.
\]

\item Define the two direction lists
\begin{align*}
\mathcal D_{i,k}^{(1)}
&:=
\left[
\left(\frac{Y_i}{a_s}\right)^{\otimes4},
(\eta_i^{(1)})^{\otimes2},
\ldots,
(\eta_i^{(k)})^{\otimes2}
\right],
\\
\mathcal D_{i,k}^{(2)}
&:=
\left[
\frac{Y_i}{a_s},
\frac{Y_i}{a_s},
\frac{Y_i'}{a_s},
\frac{Y_i'}{a_s},
(\eta_i^{(1)})^{\otimes2},
\ldots,
(\eta_i^{(k)})^{\otimes2}
\right].
\end{align*}

\item Let \(H_s\) be a random vector having the common law of $R_{i,s}+\eta_i^{(0)}$ and jointly independent of all displayed row directions and ghost
copies. This law does not depend on \(i\).

\item For \(q\in\{4,\ldots,Q-1\}\), define
\begin{align}
\mathcal N_{Q,q}(s)
:={}&
\sum_{\ell=1}^2
c_\ell(s)
\sum_{\substack{
0\leq k<K_Q,\ h\geq0\\
4+2k+h=q
}}
\frac{(-1)^k}{h!}
\sum_{i=1}^n
\mathbb E
\int_0^1
\int_{[0,1]^k}
w_\ell(t)
\prod_{j=1}^k(1-v_j)
\nonumber\\
&\qquad\qquad\times
D^q\pi_{\beta,a_0,a}(H_s)
\left[
\mathcal D_{i,k}^{(\ell)},
L_{i,k}(t,v)^{\otimes h}
\right]
\,dv\,dt.
\label{eq:definition-NQq}
\end{align}

\item Here and below, when \(k=0\), the integral over \([0,1]^k\) and the
product over \(j\) are interpreted as \(1\). Define
\begin{align}
\mathcal N_Q(s)
:=
\sum_{q=4}^{Q-1}\mathcal N_{Q,q}(s).
\label{eq:definition-NQ}
\end{align}

\item For \(0\leq k<K_Q\), put $p_k:=Q-4-2k$. Define the corresponding order-\(Q\) Taylor remainder by
\begin{align*}
\mathcal R_{i,k}^{(\ell)}(s,t,v)
:={}&
\frac{(-1)^k}{(p_k-1)!}
\int_0^1
(1-u)^{p_k-1}
\\
&\quad\times
D^Q\pi_{\beta,a_0,a}
\left(
H_s+uL_{i,k}(t,v)
\right)
\left[
\mathcal D_{i,k}^{(\ell)},
L_{i,k}(t,v)^{\otimes p_k}
\right]
\,du.
\end{align*}

\item Define
\begin{align}
\mathcal T_Q(s)
:={}&
\sum_{\ell=1}^2
c_\ell(s)
\sum_{i=1}^n
\sum_{k=0}^{K_Q-1}
\mathbb E
\int_0^1
\int_{[0,1]^k}
w_\ell(t)
\prod_{j=1}^k(1-v_j)
\mathcal R_{i,k}^{(\ell)}(s,t,v)
\,dv\,dt
\nonumber\\
&+
\sum_{\ell=1}^2
c_\ell(s)
\sum_{i=1}^n
\mathbb E
\int_0^1
w_\ell(t)
\left[(I-P_i)^{K_Q}f_{i,t}^{(\ell)}\right]
   (R_{i,s};Y_i,Y_i')
\,dt.
\label{eq:definition-TQ}
\end{align}
Thus the first line of \(\mathcal T_Q(s)\) collects the exact
order-\(Q\) Taylor remainders, while the second line collects the
terminal ghost remainders.
\end{enumerate}

\end{definition}

\begin{lemma}
\label{lem:nonterminal-common-base}
There exist universal constants \(C,C^*>0\) such that the following
holds. Under the assumptions of Proposition~\ref{prop:S-to-T}, let
\(Q\geq8\) be even, let \(s\in[\tau,1]\), and suppose $C^*Q\beta b_{d,n}\leq 1/4$. Then, for every \(q=4,\ldots,Q-1\),
\begin{align}
|\mathcal N_{Q,q}(s)|
&\leq
\frac{
Cb_{d,n}^2\beta^2\log(4d)
}{s}
q^5
(C^*q\beta b_{d,n})^{q-4}.
\label{eq:nonterminal-order-q}
\end{align}
Consequently,
\begin{align}
|\mathcal N_Q(s)|
&\leq
\frac{
Cb_{d,n}^2\beta^2\log(4d)
}{s}.
\label{eq:nonterminal-total}
\end{align}
\end{lemma}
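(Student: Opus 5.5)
The bound will come from applying Lemma~\ref{lem:common-base-repeated-pair} to each summand of $\mathcal N_{Q,q}(s)$ in \eqref{eq:definition-NQq}. The structural fact that makes this possible is item~7 of Definition~\ref{def:common-terminal-contributions}. The base point $H_s$ has the common law of $R_{i,s}+\eta_i^{(0)}$, which does not depend on $i$, and it is independent of all row directions and ghost copies. We may therefore write $H_s=H+\sigma G$ with $\sigma=\sqrt{s/2}$, where $H$ collects the non-Gaussian part and is independent of $G$. For a fixed pair $(k,h)$ with $4+2k+h=q$, we fix $t$ and $v$ and expand the multilinear slot $L_{i,k}(t,v)^{\otimes h}$ by multilinearity. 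Writing $L_{i,k}=tY_i/a_s+\sum_\ell v_\ell\eta_i^{(\ell)}$ produces at most $(k+1)^h$ terms. Each term is a derivative of order $q$ evaluated at $H+\sigma G$, with $q$ directions drawn from $Y_i/a_s$, $Y_i'/a_s$ and $\eta_i^{(\ell)}$.

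\emph{Choosing the repeated pair.} In every term the list $\mathcal D^{(\ell)}_{i,k}$ contains a repeated direction. For $\ell=1$ we take $Z_i=Y_i/a_s$ from $(Y_i/a_s)^{\otimes4}$; for $\ell=2$ we take $Z_i=Y_i'/a_s$. In both cases
\[
\sum_i\mathbb E Z_iZ_i^\top=a_s^{-2}I_r\preceq 4I_r,
\]
so $K\leq 4$. The remaining $m=q-2$ directions satisfy $\max_j|\langle u_j,\cdot\rangle|\leq 2b_{d,n}$ almost surely, because $|\langle u_j,Y_i\rangle|\le b_{d,n}$, $a_s\ge1/2$, and $B_i(s)\in\{0,1\}$. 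Lemma~\ref{lem:common-base-repeated-pair} makes no independence demand among the directions, so the correlated $Y_i$-slots and the ghost slots are allowed. The lemma then bounds each expanded term, summed over $i$, by
\[
\frac{C}{s}\,C_1^{q-2}(q-2)!\,(q-1)\,\beta^{q-2}\log(4d)\,(2b_{d,n})^{q-2}.
\]
Here $\sigma^{-2}=2/s$, which is where the factor $1/s$ comes from. The same $\mathbb E$ covers the integrals over $t$ and $v$, and those weights are bounded by $1$. We pull the sum over $i$ inside, before taking expectations, because the lemma controls the $i$-sum directly and this avoids any loss of a factor $n$.

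\emph{Combinatorial bookkeeping.} Collect factors: $1/h!$ from \eqref{eq:definition-NQq}, $(k+1)^h$ from the multilinear expansion, at most $q$ choices of the pair $(k,h)$, and $(q-2)!\,(q-1)$ from the lemma. This should give
\[
|\mathcal N_{Q,q}(s)|\le \frac{C}{s}\,b_{d,n}^2\beta^2\log(4d)\,q^5\,(C^*q\beta b_{d,n})^{q-4}.
\]
The main obstacle is to check that $(q-2)!\,(k+1)^h/h!$ is bounded by $q^{O(1)}\,\bigl(Cq\bigr)^{q-4}$, so that only a polynomial prefactor $q^5$ survives. The pieces we need are $(q-2)!\le q^2\,(q-4)!$, Stirling in the form $(q-4)!\le q^{q-4}$, and $(k+1)^h/h!\le e^{k+1}\le e^{q}$, where the last factor is absorbed into $C^{*\,q-4}$ with an $e^4$ constant left over. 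A crude count of the sum over $(k,h)$ contributes one more power of $q$. Together with the $(q-1)$ from the lemma this gives $q^5$ after enlarging $C^*$. The two factors $\beta^2b_{d,n}^2$ are left over from $\beta^{q-2}b_{d,n}^{q-2}$ once $(\beta b_{d,n})^{q-4}$ is absorbed into the geometric factor.

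\emph{Summing over $q$.} Since $q\le Q$ and $C^*Q\beta b_{d,n}\le1/4$, we have $(C^*q\beta b_{d,n})^{q-4}\le 4^{-(q-4)}$. The series $\sum_{q\ge4}q^5\,4^{-(q-4)}$ is a universal constant, and summing \eqref{eq:nonterminal-order-q} over $q=4,\dots,Q-1$ gives \eqref{eq:nonterminal-total}. The only analytic input beyond this is that the bounded selector derivatives of Lemma~\ref{lem:selector-derivatives} justify Fubini when exchanging $\mathbb E$, $\sum_i$, and the $t$- and $v$-integrals.
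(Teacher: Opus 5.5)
Your proposal is correct and follows the paper's proof: apply Lemma~\ref{lem:common-base-repeated-pair} to each $(k,h)$ term of $\mathcal N_{Q,q}(s)$ with $m=q-2$, $\sigma^2=s/2$ and $K\le 4$, do the factorial bookkeeping, and sum the geometric series using $C^*Q\beta b_{d,n}\le 1/4$. The differences are cosmetic. You always take the repeated pair from the original four directions, whereas the paper switches to the first ghost pair when $k\ge1$; your choice is legitimate because the lemma allows dependent directions. You also expand $L_{i,k}$ multilinearly and use $(k+1)^h/h!\le e^{k+1}$, where the paper bounds $L_{i,k}$ by $2(k+1)b_{d,n}$ directly and uses $(q-2)!\,(q-1)(k+1)^h/h!\le q^{q-1}$; both give the stated $q^5$ prefactor.
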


\begin{proof}
At ghost depth $k$, the first direction pattern gives an average of
\begin{align*}
D^{4+2k}\pi_{\beta,a_0,a}\left(H+t\frac{Y_i}{1-s/2}+\sum_{\ell=1}^kv_\ell\eta_i^{(\ell)}\right)
\left[\left(\frac{Y_i}{1-s/2}\right)^{\otimes4},(\eta_i^{(1)})^{\otimes2},\ldots,(\eta_i^{(k)})^{\otimes2}\right],
\end{align*}
where $H$ has the common full-sum law and is independent of all displayed directions. The second pattern replaces the first four directions by
\begin{align*}
\left[\frac{Y_i}{1-s/2},\frac{Y_i}{1-s/2},\frac{Y_i'}{1-s/2},\frac{Y_i'}{1-s/2}\right].
\end{align*}
Taylor expansion around $H$ has explicit degrees $h=0,\ldots,Q-5-2k$, and every explicit term has total order $q=4+2k+h<Q$. Expanding the $h$ local-shift directions produces at most $(k+1)^h$ monomials.

If $k=0$, select two copies of $Y_i/(1-s/2)$ in the first pattern and two copies of $Y_i'/(1-s/2)$ in the second. If $k\geq1$, select one ghost pair. Their aggregate covariance matrices satisfy
\begin{align*}
\sum_{i=1}^n\mathbb E\left[\frac{Y_i}{1-s/2}\left(\frac{Y_i}{1-s/2}\right)^\top\right]
\preceq4I_r,
\quad
\sum_{i=1}^n\mathbb E[\eta_i^{(\ell)}\eta_i^{(\ell)\top}]
=(1-s/2)I_r\preceq I_r.
\end{align*}
Every remaining direction has defining projection at most $2{\color{black}b_{d,n}}$, and the local shift satisfies
\begin{align*}
\max_{j\leq2d}\left|\left\langle u_j,t\frac{Y_i}{1-s/2}+\sum_{\ell=1}^kv_\ell\eta_i^{(\ell)}\right\rangle\right|
&\leq
2(k+1){\color{black}b_{d,n}}.
\end{align*}
Conditional on the non-Gaussian part of $H$, the Gaussian component is $\sqrt{s/2}G$. Two Gaussian integrations by parts in the selected repeated pair give Eq.~\eqref{eq:two-IBP-main}.

Fix one monomial and denote the selected repeated pair by $Z_i,Z_i$ and the remaining directions by $Z_{i,3},\ldots,Z_{i,q}$. After applying Lemma~\ref{lem:selector-derivatives}, every derivative term is a sum of products of softmax probabilities and directional scores. Lemma~\ref{lem:Gumbel-cells} represents each product of softmax probabilities by joint winner events forming a convex partition. After summing over $i$ and before taking operator norms, the matrix weights satisfy
\begin{align*}
\left\|
\sum_{i=1}^n\mathbb E\left[Z_iZ_i^\top\prod_{\ell=3}^q\langle u_{j_\ell},Z_{i,\ell}\rangle\right]
\right\|_{\mathrm{op}}
&\leq
C^q {\color{black}b_{d,n}}^{q-2}.
\end{align*}
Lemma~\ref{lem:convex-cell-contraction} therefore gives a factor $C^q\log(4d)/s$ after the two integrations by parts.

The selector derivative coefficient, the $k$ ghost pairs, the $(k+1)^h$ shift expansion, and the factor $1/h!$ from Taylor's formula give, relative to $C{\color{black}b_{d,n}}^2\beta^2\log(4d)/s$,
\begin{align*}
\frac{C_0^q q!}{h!}(\beta {\color{black}b_{d,n}})^{2k}\{(k+1)\beta {\color{black}b_{d,n}}\}^h
&\leq
Cq^4(C^*q\beta {\color{black}b_{d,n}})^{q-4},
\end{align*}
where Lemma~\ref{lem:ordered-Bell-bound} is used in the first factor and $q=4+2k+h$ in the second inequality. For each $q$, there are at most $q$ pairs $(k,h)$, so
\begin{align*}
\sum_{q=4}^{Q-1}q^5(C^*q\beta {\color{black}b_{d,n}})^{q-4}
&\leq
\sum_{q=4}^\infty q^5 4^{-(q-4)}
\leq
C.
\end{align*}

For completeness, we now verify the full common-base term and the coefficient count. Write the common state as
\begin{align*}
H_s
&=
H_{0,s}+\sqrt{\frac{s}{2}}G,
\end{align*}
where $H_{0,s}$ is independent of $G$ and of every displayed row direction. At ghost depth $k$, let
\begin{align*}
L_{i,k}
&=
t\frac{Y_i}{1-s/2}+\sum_{\ell=1}^kv_\ell\eta_i^{(\ell)}.
\end{align*}
Taylor expansion about $H_s$ shows that a term of degree $h$ has the form
\begin{align}
\frac{1}{h!}
\sum_{i=1}^n
\mathbb E D^q\pi_{\beta,a_0,a}(H_s)
[\mathcal D_{i,k},L_{i,k}^{\otimes h}],
\qquad
q=4+2k+h<Q,
\label{eq:explicit-common-base-term}
\end{align}
up to a sign and integration weights whose total absolute mass is at most one. Here $\mathcal D_{i,k}$ consists of one of the two original four-direction patterns followed by $k$ ghost pairs.

When $k=0$, select a repeated pair from the original pattern: two copies of $Y_i/(1-s/2)$ in the first pattern and the two copies of $Y_i'/(1-s/2)$ in the second. When $k\geq1$, select the first ghost pair. Denote the selected direction by $Z_i$. In every case,
\begin{align*}
\sum_{i=1}^n\mathbb E[Z_iZ_i^\top]
&\preceq
4I_r.
\end{align*}
Every remaining original or ghost direction has defining projection at most $2b_{d,n}$, while
\begin{align*}
\max_{j\leq2d}|\langle u_j,L_{i,k}\rangle|
&\leq
2(k+1)b_{d,n}.
\end{align*}
Applying Lemma~\ref{lem:common-base-repeated-pair} to Eq.~\eqref{eq:explicit-common-base-term}, with $m=q-2$ and $\sigma^2=s/2$, bounds a fixed $(k,h)$ contribution by
\begin{align*}
\frac{Cb_{d,n}^2\beta^2\log(4d)}{s}
\frac{C_2^q(q-2)!(q-1)}{h!}
(k+1)^h
(\beta b_{d,n})^{q-4}.
\end{align*}
Since $q-2-h=2k+2$ and $k+1\leq q$,
\begin{align*}
\frac{(q-2)!(q-1)}{h!}(k+1)^h
&\leq
q^{2k+2}q q^h
=
q^{q-1}
=
q^3q^{q-4}.
\end{align*}
After enlarging $C^*$, this is bounded by
\begin{align*}
\frac{Cb_{d,n}^2\beta^2\log(4d)}{s}
q^3(C^*q\beta b_{d,n})^{q-4}.
\end{align*}
For a fixed $q$, there are at most $q$ admissible pairs $(k,h)$, two generator patterns, and only a fixed number of choices of the repeated pair. Thus Eq.~\eqref{eq:nonterminal-order-q} holds. Summing over
\(q=4,\ldots,Q-1\), and using
\[
\sum_{q=4}^{Q-1}
q^5(C^*q\beta b_{d,n})^{q-4}
\leq C,
\]
proves Eq.~\eqref{eq:nonterminal-total}.
\end{proof}

\begin{lemma}
\label{lem:terminal-order-Q}
There exist universal constants \(C,C^*>0\) such that the following
holds. Under the assumptions of Proposition~\ref{prop:S-to-T}, let
\(Q\geq8\) be even, let \(s\in[\tau,1]\), and suppose $C^*Q\beta b_{d,n}\leq 1/4$. Then the quantity \(\mathcal T_Q(s)\) defined in
Eq.~\eqref{eq:definition-TQ} satisfies
\begin{align}
|\mathcal T_Q(s)|
&\leq
CnQ^2(C^*Q\beta b_{d,n})^Q.
\label{eq:terminal-total}
\end{align}
\end{lemma}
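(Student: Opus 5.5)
The plan is to bound the two lines of \(\mathcal T_Q(s)\) in Eq.~\eqref{eq:definition-TQ} separately and crudely, using only the pointwise selector derivative bound of Lemma~\ref{lem:selector-derivatives} and the projection bounds \(\max_j|\langle u_j,\cdot\rangle|\). No Gaussian integration by parts or convex-cell argument is needed, because every term has total order \(Q\). The price is that we lose the factor \(n\): we simply sum over \(i\). Throughout we use \(a_s=1-s/2\geq1/2\), \(c_\ell(s)\leq1/4\), \(0\leq w_\ell\leq1\), and \(\prod_j(1-v_j)\leq1\). All directions satisfy the almost-sure bounds \(\max_j|\langle u_j,Y_i/a_s\rangle|\leq2b_{d,n}\), the same for \(Y_i'/a_s\), \(\max_j|\langle u_j,\eta_i^{(\ell)}\rangle|\leq2b_{d,n}\), and \(\max_j|\langle u_j,L_{i,k}(t,v)\rangle|\leq2(k+1)b_{d,n}\).

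For the Taylor-remainder line, fix \(i\), \(\ell\), and \(0\leq k<K_Q\). The direction list \([\mathcal D_{i,k}^{(\ell)},L_{i,k}^{\otimes p_k}]\) has \(4+2k\) entries with projection at most \(2b_{d,n}\) and \(p_k=Q-4-2k\) entries with projection at most \(2(k+1)b_{d,n}\). Lemma~\ref{lem:selector-derivatives} then gives
\[
|\mathcal R_{i,k}^{(\ell)}|\leq \frac{1}{p_k!}C_0^QQ!\,\beta^Q(2b_{d,n})^Q(k+1)^{p_k}.
\]
Since \(k+1\leq Q\), we have \(Q!(k+1)^{p_k}/p_k!\leq Q^{4+2k}Q^{p_k}=Q^Q\). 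Each \((i,\ell,k)\) term is therefore at most \((C^*Q\beta b_{d,n})^Q\) for suitable \(C^*\). Summing over \(\ell\leq2\), \(k<K_Q\leq Q\), and \(i\leq n\) gives at most \(CnQ(C^*Q\beta b_{d,n})^Q\).

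For the terminal ghost line, apply Lemma~\ref{lem:finite-ghost-completion}(ii) with \(k=K_Q\) to \(f=f_{i,t}^{(\ell)}(\cdot;Y_i,Y_i')\), holding \(Y_i,Y_i'\) fixed. This function is \(C^\infty\) with bounded derivatives, and \(\eta_i\) is centered with finite second moment. The lemma writes \((I-P_i)^{K_Q}f(x)\) as an average of
\[
D^{4+2K_Q}\pi=D^Q\pi\Bigl(x+tY_i/a_s+\textstyle\sum v_\ell\eta_i^{(\ell)}\Bigr)
\]
evaluated on the four original directions together with \(K_Q\) ghost pairs, with weight mass at most \(2^{-K_Q}\leq1\). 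All \(Q\) directions have projection at most \(2b_{d,n}\). Lemma~\ref{lem:selector-derivatives} bounds this pointwise by \(C_0^QQ!\beta^Q(2b_{d,n})^Q\leq(C^*Q\beta b_{d,n})^Q\). Summing over \(i\) and \(\ell\) yields \(Cn(C^*Q\beta b_{d,n})^Q\).

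Adding the two lines gives \(|\mathcal T_Q(s)|\leq CnQ(C^*Q\beta b_{d,n})^Q\), which is stronger than Eq.~\eqref{eq:terminal-total}. The hypothesis \(C^*Q\beta b_{d,n}\leq1/4\) is only needed to keep constants consistent with Lemma~\ref{lem:nonterminal-common-base}. The main obstacle is the bookkeeping in the remainder line: the combinatorial factor \(Q!(k+1)^{p_k}/p_k!\) has to be absorbed into \(Q^Q\) uniformly in \(k\), and the universal constant must be chosen so that it is not multiplied by \(k\). The bound \(k+1\leq Q\) does this. The rest is routine integrability, already justified by the bounded derivatives and Lemma~\ref{lem:finite-ghost-completion}.
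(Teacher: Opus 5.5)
Your proposal is correct and follows essentially the same route as the paper. You apply Lemma~\ref{lem:selector-derivatives} pointwise to each order-$Q$ Taylor remainder, using $Q!/p_k!\le Q^{4+2k}$ and $(k+1)^{p_k}\le Q^{p_k}$, and to the terminal ghost term via Lemma~\ref{lem:finite-ghost-completion}(ii); you then sum over the two patterns, fewer than $Q$ depths, and $n$ rows. Like the paper, you in fact obtain the sharper bound $CnQ(C^*Q\beta b_{d,n})^Q$, and you are right that the hypothesis $C^*Q\beta b_{d,n}\le 1/4$ is not used here.
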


\begin{proof}
At ghost depth $k$, Taylor's formula leaves an exact derivative of order $Q$ with four original directions, $k$ ghost pairs, and $Q-4-2k$ copies of the local shift. Lemma~\ref{lem:selector-derivatives} gives, for each row and depth,
\begin{align*}
\frac{C_0^Q Q!}{(Q-4-2k)!}\beta^Q(2{\color{black}b_{d,n}})^{4+2k}\{2(k+1){\color{black}b_{d,n}}\}^{Q-4-2k}
&\leq
(C^*Q\beta {\color{black}b_{d,n}})^Q.
\end{align*}
The same estimate applies to the terminal ghost term because its derivative order is exactly $Q$. 

Here is the explicit enumeration. At depth $0\leq k<(Q-4)/2$, let $m_k=4+2k$ and $p_k=Q-m_k$. For either original direction pattern, Taylor's integral remainder is
\begin{align*}
\frac{1}{(p_k-1)!}
\int_0^1(1-v)^{p_k-1}
D^Q\pi_{\beta,a_0,a}(H_s+vL_{i,k})
[\mathcal D_{i,k},L_{i,k}^{\otimes p_k}]dv.
\end{align*}
The integral weight has mass $1/p_k!$. Lemma~\ref{lem:selector-derivatives}, the bounds $\max_j|\langle u_j,Z\rangle|\leq2b_{d,n}$ for every direction in $\mathcal D_{i,k}$, and $\max_j|\langle u_j,L_{i,k}\rangle|\leq2(k+1)b_{d,n}$ give the upper bound of the absolute value of Taylor remainder at depth $k < (Q-4)/2$ and row $i$
\begin{align*}
\frac{C_0^QQ!}{p_k!}\beta^Q
(2b_{d,n})^{m_k}
\{2(k+1)b_{d,n}\}^{p_k}\leq
(C^*Q\beta b_{d,n})^Q,
\end{align*}
because $Q!/p_k!\leq Q^{m_k}$ and $(k+1)^{p_k}\leq Q^{p_k}$. At the terminal ghost depth $(Q-4)/2$, Lemma~\ref{lem:finite-ghost-completion} gives a derivative of order $Q$ with four original directions and $(Q-4)/2$ ghost pairs. Its absolute value is at most
\begin{align*}
C_0^QQ!\beta^Q(2b_{d,n})^Q
&\leq
(C^*Q\beta b_{d,n})^Q.
\end{align*}
All $t$, $v$, and ghost-integration weights have total absolute mass at most one. Summing over the two generator patterns, fewer than $Q$ depths, and $n$ active rows costs at most $CnQ$; the stated $CnQ^2$ bound is therefore valid and leaves room for the finite direction allocations. There are $n$ choices of the active row, fewer than $Q$ ghost depths, and the finite allocation and integration sums cost at most one additional factor $Q$. Summing over the two generator patterns, the \(n\) active rows, and
the fewer than \(Q\) admissible ghost depths proves
\[
|\mathcal T_Q(s)|
\leq
CnQ^2(C^*Q\beta b_{d,n})^Q,
\]
which is Eq.~\eqref{eq:terminal-total}.
\end{proof}

\section{Supporting results for Section~\ref{subsec:T-to-G}}
\label{sec:T-to-G-appendix}

\begin{lemma}
\label{lem:Gaussian-replacement-generator}
Assume that the collection $\left\{
Y_i',\widetilde Y_i,D_i',\widetilde D_i
:
1\leq i\leq n
\right\}$ is jointly independent and independent of $\sigma\!\left(
G,G_1,(D_j,Y_j)_{1\leq j\leq n}
\right)$, with $\mathcal L(Y_i')
=
\mathcal L(\widetilde Y_i)
=
\mathcal L(Y_i)$ and $\mathcal L(D_i')
=
\mathcal L(\widetilde D_i)
=
\mathcal L(D_i)$ for every \(i\). Suppose that $\mathbb E\|Y_i\|_2^4<\infty$, for $i=1,\ldots,n$. For every \(f\in C_b^4(\mathbb R^r)\) and \(s\in(0,1)\),
the derivative of \(\mathbb E[f(T_s)]\) is the right-hand side of
Eq.~\eqref{eq:main-Gaussian-generator}, with
\(\pi_{\beta,a_0,a}\) replaced by \(f\). Moreover, the map $s\longmapsto \mathbb E[f(T_s)]$ is Lipschitz, and hence absolutely continuous, on \([0,1]\).
The right-hand side of the derivative identity is bounded on
\((0,1)\), and therefore belongs to \(L^1(0,1)\). Consequently,
\begin{align}
\mathbb E[f(T_1)]-\mathbb E[f(T_0)]
&=
\int_0^1
\frac{d}{ds}\mathbb E[f(T_s)]\,ds.
\label{eq:integrated-Gaussian-generator}
\end{align}
\end{lemma}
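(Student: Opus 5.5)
We differentiate $s\mapsto\mathbb E[f(T_s)]$ directly, treating the Gaussian part and the Bernoulli--row part separately, and then justify the endpoint integration exactly as in Lemma~\ref{lem:exact-Bernoulli-generator}. The derivative of the Gaussian variance $s/2$ of $G_1$ contributes $\tfrac14\mathbb E[\Delta f(T_s)]$, by the heat equation together with differentiation under the expectation; this is valid since $f\in C_b^4$. For the row part, write $T_s=R_{i,s}+2\sqrt{1-s}D_iY_i$. Differentiating the scale $\sqrt{1-s}$ gives
\[
-\frac{1}{\sqrt{1-s}}\sum_{i=1}^n\mathbb E\,Df(T_s)[D_iY_i].
\]
Conditioning on $D_i$ shows that only the event $D_i=1$ contributes. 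We then Taylor expand $Df(R_{i,s}+v)[v/(2\sqrt{1-s})]$ with $v=2\sqrt{1-s}D_iY_i$ around $R_{i,s}$ to third order with integral remainder. The linear term vanishes because $Y_i$ is centered and independent of $R_{i,s}$.

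The quadratic term is $-\tfrac{1}{2(1-s)}\sum_i\mathbb E\,D^2f(R_{i,s})[v,v]$. The key step is to rewrite it against the Laplacian. Since $\mathbb E[(2D_i)^2]=1/2$, we have $\sum_i\mathbb E[vv^\top]=2(1-s)I_r$. We replace $v$ by an independent copy $v'=2\sqrt{1-s}D_i'Y_i'$, obtaining
\[
-\tfrac{1}{2(1-s)}\sum_i\mathbb E\,D^2f(R_{i,s})[v',v'].
\]
We then re-expand $D^2f(R_{i,s})=D^2f(T_s)-D^3f(R_{i,s})[v,\cdot,\cdot]-\int(1-t)D^4\cdots$. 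Summing over $i$, the leading part gives $-\tfrac14\cdot 2\,\mathbb E\Delta f(T_s)\cdot\tfrac12=-\tfrac14\mathbb E\Delta f(T_s)$ after checking constants, which cancels the Gaussian contribution. The cubic piece $D^3f(R_{i,s})[v,v',v']$ vanishes by independence and centering of $v$. The fourth-order remainder, with weight $(1-t)$, produces the second line of Eq.~\eqref{eq:main-Gaussian-generator}, with directions $[2D_iY_i,2D_iY_i,2D_i'Y_i',2D_i'Y_i']$ and prefactor $(1-s)/2$ coming from $(2\sqrt{1-s})^4$ divided by $2(1-s)\cdot 4$.

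The cubic Taylor term $\tfrac{-1}{2\cdot2\sqrt{1-s}}\sum_i\mathbb E\,D^3f(R_{i,s})[v,v,v]$ must be moved to base point $T_s$. Using $\mathbb E(2D_i)^3=1$, it carries the factor $(1-s)^{3/2}\sum_i\mathbb E Y_i^{\otimes3}/(1-s)$, so the third-moment term has coefficient $-\sqrt{1-s}/4$ as displayed. The correction $D^3f(T_s)-D^3f(R_{i,s})$ is handled by a first-order expansion with an independent copy $(\widetilde D_i,\widetilde Y_i)$ carrying the cubic direction. Here $\int_0^1 D^4f(R_{i,s}+t v)[v,\tilde v,\tilde v,\tilde v]\,dt$ gives the third line with prefactor $(1-s)/4$. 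The remaining Taylor remainder of $Df$ at fourth order, with weight $(1-t)^2/2$, gives the last line.

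The main obstacle is bookkeeping the constants so that the Laplacian cancels exactly and the prefactors $(1-s)/2$, $(1-s)/4$, $-(1-s)/4$ come out as stated. I would verify this by checking the coefficients on a one-dimensional quadratic or cubic $f$. For integrability, every remainder is bounded by $C\sup\|D^4f\|_{\mathrm{op}}\sum_i\{\mathbb E\|Y_i\|^4+(\mathbb E\|Y_i\|^2)^2\}$, and the third-order term by $\sup\|D^3f\|\sum_i\mathbb E\|Y_i\|^3$. This gives a bounded derivative on $(0,1)$, and hence Lipschitz continuity on $(0,1)$. Continuity at $s=0,1$ follows from almost-sure continuity of $T_s$ in $s$ and dominated convergence. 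Absolute continuity and the fundamental theorem of calculus then give Eq.~\eqref{eq:integrated-Gaussian-generator}.
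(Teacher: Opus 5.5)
Your proposal is correct and follows the paper's proof essentially step for step: the heat equation for $G_1$, Taylor expansion of the row term about $R_{i,s}$, rewriting $\tfrac14\mathbb E[\Delta f(T_s)]$ through the copy $(D_i',Y_i')$ and expanding back, moving the cubic term to $T_s$ with $(\widetilde D_i,\widetilde Y_i)$, and finally the uniform derivative bound plus pathwise continuity of $T_s$ at the endpoints; the final coefficients you state are the right ones. Only the intermediate bookkeeping needs fixing: $\sum_i\mathbb E[vv^\top]=\tfrac{1-s}{2}I_r$ (not $2(1-s)I_r$), the cubic Taylor term is $-\tfrac{1}{4(1-s)}\sum_i\mathbb E\,D^3f(R_{i,s})[v,v,v]$, and since $v=\sqrt{1-s}\,(2D_iY_i)$, each $v$-direction rewritten as $2D_iY_i$ contributes one factor $\sqrt{1-s}$, which is exactly what produces the coefficients $\tfrac{1-s}{2}$, $-\tfrac{\sqrt{1-s}}{4}$, $\tfrac{1-s}{4}$ and $-\tfrac{1-s}{4}$.
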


\begin{proof}
Differentiating $T_s$ and applying Gaussian integration by parts to $G_1$ gives
\begin{align}
\frac{d}{ds}\mathbb E[f(T_s)]
&=
\frac14\mathbb E[\Delta f(T_s)]
-
\frac{1}{2\sqrt{1-s}}
\sum_{i=1}^n\mathbb E\left[Df\left(R_{i,s}+2\sqrt{1-s}D_iY_i\right)[2D_iY_i]\right].
\label{eq:raw-Gaussian-replacement}
\end{align}
Taylor's formula gives
\begin{align}
Df\left(R_{i,s}+2\sqrt{1-s}D_iY_i\right)[2D_iY_i]
={}&
Df(R_{i,s})[2D_iY_i]
+
\sqrt{1-s}D^2f(R_{i,s})[2D_iY_i,2D_iY_i]
\nonumber\\
&+
\frac{1-s}{2}D^3f(R_{i,s})[(2D_iY_i)^{\otimes3}]
\nonumber\\
&+
\frac{(1-s)^{3/2}}{2}\int_0^1(1-t)^2D^4f\left(R_{i,s}+2t\sqrt{1-s}D_iY_i\right)
\nonumber\\
&\hspace{10em}\times[(2D_iY_i)^{\otimes4}]dt.
\label{eq:directional-Taylor-Gaussian}
\end{align}
The first term has expectation zero because $2D_iY_i$ is centered and independent of $R_{i,s}$. Since
\begin{align*}
\sum_{i=1}^n\mathbb E[(2D_iY_i)(2D_iY_i)^\top]
&=
\frac12I_r,
\end{align*}
we have
\begin{align*}
\frac14\mathbb E[\Delta f(T_s)]
&=
\frac12\sum_{i=1}^n\mathbb E\left[D^2f(T_s)[2D_i'Y_i',2D_i'Y_i']\right].
\end{align*}
Therefore, the difference between the Gaussian Laplacian term and the second-order Taylor term in Eq.~\eqref{eq:raw-Gaussian-replacement}, Eq.~\eqref{eq:directional-Taylor-Gaussian} is
\begin{align*}
{}&\frac12\sum_{i=1}^n\mathbb E\left[D^2f(T_s)[2D_i'Y_i',2D_i'Y_i']-D^2f(R_{i,s})[2D_i'Y_i',2D_i'Y_i']\right]
\nonumber\\
&=
\frac{1-s}{2}\sum_{i=1}^n\mathbb E\int_0^1(1-t)D^4f\left(R_{i,s}+2t\sqrt{1-s}D_iY_i\right)
[2D_iY_i,2D_iY_i,2D_i'Y_i',2D_i'Y_i']dt.
\end{align*}
The intervening third-order term contains one centered factor $2D_iY_i$, independent of $(R_{i,s},D_i',Y_i')$, and hence its expectation is zero.

For the third-order Taylor term, independence and $\mathbb E(2D_i)^3=1$ give
\begin{align*}
\mathbb E\left[D^3f(R_{i,s})[(2D_iY_i)^{\otimes3}]\right]
&=
\sum_{a_1,a_2,a_3=1}^r\mathbb E[Y_{ia_1}Y_{ia_2}Y_{ia_3}]\mathbb E[\partial_{a_1a_2a_3}f(R_{i,s})].
\end{align*}
Since $T_s=R_{i,s}+2\sqrt{1-s}D_iY_i$, Taylor's formula and an independent copy $(\widetilde D_i,\widetilde Y_i)$ give
\begin{align*}
{}&\sum_{a_1,a_2,a_3=1}^r\mathbb E[Y_{ia_1}Y_{ia_2}Y_{ia_3}]\mathbb E[\partial_{a_1a_2a_3}f(R_{i,s})]
\nonumber\\
&=
\sum_{a_1,a_2,a_3=1}^r\mathbb E[Y_{ia_1}Y_{ia_2}Y_{ia_3}]\mathbb E[\partial_{a_1a_2a_3}f(T_s)]
\nonumber\\
&\quad-
\sqrt{1-s}\mathbb E\int_0^1D^4f\left(R_{i,s}+2t\sqrt{1-s}D_iY_i\right)
[2D_iY_i,2\widetilde D_i\widetilde Y_i,2\widetilde D_i\widetilde Y_i,2\widetilde D_i\widetilde Y_i]dt.
\end{align*}
Substituting the preceding covariance and third moment identities into Eq.~\eqref{eq:raw-Gaussian-replacement}, and using the final fourth-order remainder in Eq.~\eqref{eq:directional-Taylor-Gaussian}, gives Eq.~\eqref{eq:main-Gaussian-generator}.

It remains to justify integration up to the endpoints. Although the
raw derivative in Eq.~\eqref{eq:raw-Gaussian-replacement} contains
the factor \((1-s)^{-1/2}\), that factor disappears after the
second- and third-order cancellations leading to
Eq.~\eqref{eq:main-Gaussian-generator}. By multilinearity of
\(D^3f\) and \(D^4f\), the final generator satisfies
\begin{align}
\sup_{0<s<1}
\left|
\frac{d}{ds}\mathbb E[f(T_s)]
\right|
&\leq
\frac14
\sup_{x\in\mathbb R^r}
\|D^3f(x)\|_{\mathrm{op}}
\sum_{i=1}^n
\mathbb E\|Y_i\|_2^3
\nonumber\\
&\quad+
C
\sup_{x\in\mathbb R^r}
\|D^4f(x)\|_{\mathrm{op}}
\sum_{i=1}^n
\mathbb E\|Y_i\|_2^4
<\infty.
\label{eq:Gaussian-generator-uniform-bound}
\end{align}
Indeed, the repeated-copy fourth-order terms are controlled using
\[
\left(\mathbb E\|Y_i\|_2^2\right)^2
\leq
\mathbb E\|Y_i\|_2^4,
\]
while the term containing one copy of \(Y_i\) and three copies of
\(\widetilde Y_i\) is controlled by
\[
\mathbb E\|Y_i\|_2\,
\mathbb E\|Y_i\|_2^3
\leq
\mathbb E\|Y_i\|_2^4.
\]

Next, realize all variables appearing in
\[
T_s
=
\frac{1}{\sqrt2}G
+
2\sqrt{1-s}\sum_{i=1}^nD_iY_i
+
\sqrt{\frac{s}{2}}G_1
\]
on the same probability space. For every outcome,
\(s\mapsto T_s\) is continuous on \([0,1]\). Since \(f\) is bounded
and continuous, dominated convergence gives
\[
\mathbb E[f(T_s)]
\longrightarrow
\mathbb E[f(T_{s_0})]
\qquad
\text{as }s\to s_0
\]
for every \(s_0\in[0,1]\).

For \(0<s<t<1\), the mean value theorem and
Eq.~\eqref{eq:Gaussian-generator-uniform-bound} give
\[
\left|
\mathbb E[f(T_t)]-\mathbb E[f(T_s)]
\right|
\leq
C_f|t-s|
\]
for some finite constant \(C_f\). Letting \(s\downarrow0\) or
\(t\uparrow1\), using the endpoint continuity just proved, shows
that the same bound holds for all \(s,t\in[0,1]\). Hence
\(s\mapsto\mathbb E[f(T_s)]\) is Lipschitz on \([0,1]\), and
therefore absolutely continuous. The fundamental theorem of calculus
now gives Eq.~\eqref{eq:integrated-Gaussian-generator}.
\end{proof}

\begin{lemma}
\label{lem:T-to-G-fourth-order}
There exist universal constants \(C,C^*>0\) such that the following
holds. Assume the probabilistic and selector hypotheses of
Proposition~\ref{prop:T-to-G}. Let \(Q\geq8\) be even and let
\(\beta>0\) satisfy $C^*Q\beta b_{d,n}\leq 1/4$. Then, uniformly over \(s\in[0,1]\), the absolute value of the sum of
the three fourth-order terms in
Eq.~\eqref{eq:main-Gaussian-generator} is at most
\[
C b_{d,n}^2\beta^2\log(4d)
+
C nQ^2(C^*Q\beta b_{d,n})^Q.
\]
\end{lemma}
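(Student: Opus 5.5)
We mirror the ghost-completion argument of Lemmas~\ref{lem:nonterminal-common-base} and~\ref{lem:terminal-order-Q}, now along the Gaussian replacement path. The key simplification is that the independent Gaussian part of every base state $R_{i,s}$ is $G/\sqrt2+\sqrt{s/2}\,G_1\stackrel d=\sigma_sG$ with $\sigma_s^2=(1+s)/2\geq1/2$. The factor $1/s$ from the $S\to T$ path is therefore replaced by a constant, uniformly in $s\in[0,1]$.

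\textbf{Step 1: ghost completion.} For each of the three fourth-order terms in Eq.~\eqref{eq:main-Gaussian-generator}, view the integrand as a function $f_{i,t}(R_{i,s};\text{directions})$. Here the directions are $2D_iY_i$, together with one of the following: the pair $2D_i'Y_i'$, three copies of $2\widetilde D_i\widetilde Y_i$, or two further copies of $2D_iY_i$. The local shift is $2t\sqrt{1-s}D_iY_i$. Set $\eta_i=2\sqrt{1-s}D_iY_i$. Lemma~\ref{lem:finite-ghost-completion} with $K=K_Q=(Q-4)/2$ writes $\mathbb E f_{i,t}(R_{i,s})$ as a sum of ghost terms $(I-P_i)^kf_{i,t}$ evaluated at the common state $H_s=R_{i,s}+\eta_i^{(0)}\stackrel d=T_s$, plus a terminal term at depth $K_Q$. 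Part~(ii) converts each $(I-P_i)^k$ into $k$ ghost pairs $(\eta_i^{(\ell)})^{\otimes2}$ with weights of total mass at most one. We then Taylor expand around $H_s$ in the local shift $L_{i,k}=2t\sqrt{1-s}D_iY_i+\sum_\ell v_\ell\eta_i^{(\ell)}$ up to total order $Q$. Since $H_s$ has a law independent of $i$ and is independent of all displayed directions, it can be recoupled as one common state before summing over $i$, exactly as in Definition~\ref{def:common-terminal-contributions}.

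\textbf{Step 2: nonterminal terms.} Each explicit term has order $q=4+2k+h<Q$, and we must exhibit a repeated pair $Z_i,Z_i$ with $\sum_i\mathbb E[Z_iZ_i^\top]\preceq CI_r$.
\begin{itemize}
\item For $k\geq1$, take a ghost pair; then $\sum_i\mathbb E\eta_i\eta_i^\top=2(1-s)I_r\preceq 2I_r$.
\item For $k=0$, in the first term take $2D_i'Y_i'$, which gives $\sum_i \mathbb E=I_r/2$.
\item In the fourth term take two copies of $2D_iY_i$, which is fine because $D_i^2=D_i$.
\item In the third term there is no exact repeated pair, but the triple $2\widetilde D_i\widetilde Y_i$ appears three times. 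Select two of its copies as $Z_i$ and treat the third as an ordinary direction. Lemma~\ref{lem:common-base-repeated-pair} allows $Z_i$ and the remaining $Z_{i,\ell}$ to be dependent, so this is admissible.
\end{itemize}
All remaining directions satisfy $\max_j|\langle u_j,\cdot\rangle|\leq 2b_{d,n}$, and the shift satisfies the bound $2(k+1)b_{d,n}$. Applying Lemma~\ref{lem:common-base-repeated-pair} with $m=q-2$ and $\sigma^2=\sigma_s^2\geq1/2$ gives $Cb_{d,n}^2\beta^2\log(4d)\,C_2^q(q-2)!(q-1)(k+1)^h(\beta b_{d,n})^{q-4}/h!$. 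The same combinatorics as in Lemma~\ref{lem:nonterminal-common-base} bound this by $Cb_{d,n}^2\beta^2\log(4d)\,q^5(C^*q\beta b_{d,n})^{q-4}$. Summing over $q$ with $C^*Q\beta b_{d,n}\le1/4$ yields the geometric series, which is at most $Cb_{d,n}^2\beta^2\log(4d)$.

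\textbf{Step 3: terminal terms.} The order-$Q$ Taylor remainders and the depth-$K_Q$ ghost remainders are bounded crudely, row by row, using Lemma~\ref{lem:selector-derivatives}. Each is at most $(C^*Q\beta b_{d,n})^Q$. Summing over $n$ rows, fewer than $Q$ depths, and three patterns gives $CnQ^2(C^*Q\beta b_{d,n})^Q$, as in Lemma~\ref{lem:terminal-order-Q}.

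\textbf{Main obstacle.} The delicate point is the third term, which has no literal repeated pair. We must check carefully that choosing two of the three identical $\widetilde Y_i$ directions preserves the operator-norm bound on the matrix weights $A_{\mathcal Q,\boldsymbol j}$ in Lemma~\ref{lem:common-base-repeated-pair}. The sign-indefinite factor $\langle u_{j},2\widetilde D_i\widetilde Y_i\rangle$ is bounded by $2b_{d,n}$, so $|v^\top A v|\le 2b_{d,n}\prod c_\ell\sum_i\mathbb E(v^\top Z_i)^2$, and the bound survives. A secondary point is to confirm that the $\sqrt{1-s}$ prefactors never enter a denominator; they only multiply $\eta_i$, so the estimates are uniform in $s$.
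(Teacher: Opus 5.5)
Your proposal is correct and follows essentially the same route as the paper's proof: ghost completion of each of the three direction patterns with increment $2\sqrt{1-s}D_iY_i$, recoupling to a common state whose Gaussian part has variance $(1+s)/2\geq1/2$, the same repeated-pair choices (including two of the three copies of $2\widetilde D_i\widetilde Y_i$), Lemma~\ref{lem:common-base-repeated-pair} for the nonterminal orders, and crude row-wise bounds for the order-$Q$ remainders. One harmless slip is in the ghost budget: since $\mathbb E D_i=1/8$, it equals $\sum_i\mathbb E[\eta_i\eta_i^\top]=\frac{1-s}{2}I_r$ rather than $2(1-s)I_r$, but you only use the upper bound $\preceq CI_r$.
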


\begin{proof}
The Gaussian part of $R_{i,s}$ satisfies
\begin{align*}
\frac{1}{\sqrt2}G+\sqrt{\frac{s}{2}}G_1
\stackrel d=
\sqrt{\frac{1+s}{2}}G,
\quad
\frac{1+s}{2}I_r
\succeq
\frac12I_r.
\end{align*}
The repeated-direction covariance budgets are
\begin{align*}
\sum_{i=1}^n\mathbb E[(2D_iY_i)(2D_iY_i)^\top]
=
\frac12I_r,
\quad
\sum_{i=1}^n\mathbb E[(2D_i'Y_i')(2D_i'Y_i')^\top]
=
\frac12I_r,
\end{align*}
and the same identity holds for $2\widetilde D_i\widetilde Y_i$. Moreover,
\begin{align*}
\max_{j\leq2d}|\langle u_j,2D_iY_i\rangle|,
\quad
\max_{j\leq2d}|\langle u_j,2D_i'Y_i'\rangle|,
\quad
\max_{j\leq2d}|\langle u_j,2\widetilde D_i\widetilde Y_i\rangle|
&\leq
2{\color{black}b_{d,n}}.
\end{align*}
Apply the finite ghost expansion in Lemma~\ref{lem:finite-ghost-completion} and the order-$Q$ Taylor expansion to each of the three direction patterns. At ghost depth $k$ and Taylor degree $h$, the nonterminal derivative order is $q=4+2k+h<Q$. The repeated pair is $2D_i'Y_i',2D_i'Y_i'$ in the first pattern, two copies of $2\widetilde D_i\widetilde Y_i$ in the second, and two copies of $2D_iY_i$ in the third. Since the Gaussian variance is $(1+s)/2$, two Gaussian integrations by parts cost at most the factor $2$.

We verify that the common-base construction applies separately to all
three patterns. Fix \(s\in[0,1]\), and put
\[
K_Q:=\frac{Q-4}{2},
\qquad
\xi_i:=2\sqrt{1-s}D_iY_i.
\]
Let
\[
\mathcal F_{\mathrm{master}}
:=
\sigma\!\left(
G,G_1,
(D_j,Y_j,D_j',Y_j',
 \widetilde D_j,\widetilde Y_j)_{1\leq j\leq n}
\right).
\]
On a further product extension of the probability space, choose the
finite array
\[
\left(
\xi_{i,p}^{(h)}
\right)_{
\substack{
1\leq i\leq n,\;
1\leq p\leq3,\;
0\leq h\leq K_Q
}}
\]
so that all random vectors in this array are jointly independent, the
entire array is independent of
\(\mathcal F_{\mathrm{master}}\), and $\mathcal L\!\left(\xi_{i,p}^{(h)}\right)
=
\mathcal L(\xi_i)$ for every \(i,p,h\).

For the three direction patterns, set
\[
\begin{aligned}
\mathcal A_i^{(1)}
&:=\sigma(D_i,Y_i,D_i',Y_i'),\\
\mathcal A_i^{(2)}
&:=\sigma(D_i,Y_i,\widetilde D_i,\widetilde Y_i),\\
\mathcal A_i^{(3)}
&:=\sigma(D_i,Y_i).
\end{aligned}
\]
For \(p=1,2,3\), define
\[
H_{i,s}^{(p)}
:=
R_{i,s}+\xi_{i,p}^{(0)}.
\]
Then
\[
H_{i,s}^{(p)}\stackrel d=T_s,
\]
and \(H_{i,s}^{(p)}\) is independent of the joint sigma-field
\[
\mathcal A_i^{(p)}
\vee
\sigma\!\left(
\xi_{i,p}^{(h)}:1\leq h\leq K_Q
\right).
\]

Let \(P_{i,s}\) denote convolution in the spatial variable by
\(\mathcal L(\xi_i)\). When \(P_{i,s}\) is applied to a function
depending on the active directions, those directions are held fixed;
equivalently, the convolution is applied conditionally on
\(\mathcal A_i^{(p)}\). For fixed \(t\),
Lemma~\ref{lem:finite-ghost-completion} is therefore applied
conditionally on \(\mathcal A_i^{(p)}\) to
\begin{align*}
D^4\pi_{\beta,a_0,a}(x+t\xi_i)[\mathcal D_i^{(p)}],
\qquad p=1,2,3,
\end{align*}
where
\begin{align*}
\mathcal D_i^{(1)}
&=[2D_iY_i,2D_iY_i,2D_i'Y_i',2D_i'Y_i'],\\
\mathcal D_i^{(2)}
&=[2D_iY_i,2\widetilde D_i\widetilde Y_i,2\widetilde D_i\widetilde Y_i,2\widetilde D_i\widetilde Y_i],\\
\mathcal D_i^{(3)}
&=[(2D_iY_i)^{\otimes4}].
\end{align*}

For each \(p=1,2,3\), choose, on a further product extension,
a random vector \(H_s^{(p)}\) such that
\[
H_s^{(p)}\stackrel d=T_s.
\]
Choose \(H_s^{(1)},H_s^{(2)},H_s^{(3)}\) jointly independent of
\(\mathcal F_{\mathrm{master}}\) and of the entire ghost array
\[
\left(
\xi_{i,p}^{(h)}
\right)_{
\substack{
1\leq i\leq n,\;
1\leq p\leq3,\;
0\leq h\leq K_Q
}}.
\]

For \(1\leq i\leq n\) and \(1\leq p\leq3\), put
\[
\Gamma_{i,p}
:=
\left(
\mathcal D_i^{(p)},
(\xi_{i,p}^{(h)})_{1\leq h\leq K_Q}
\right).
\]
The preceding independence statements imply that, for every integrable
measurable function \(\Phi\),
\[
\mathbb E\!\left[
\Phi\!\left(H_{i,s}^{(p)},\Gamma_{i,p}\right)
\right]
=
\mathbb E\!\left[
\Phi\!\left(H_s^{(p)},\Gamma_{i,p}\right)
\right].
\]
Consequently, after the ghost and Taylor expansions, every occurrence
of \(H_{i,s}^{(p)}\) in the \(i\)th row expectation may be replaced by
the same common state \(H_s^{(p)}\). By linearity of expectation, this
replacement is made before summing over \(i\). Thus, for each fixed
pattern \(p\), the entire direction array is independent of one common
base state, as required by
Lemma~\ref{lem:common-base-repeated-pair}. At depth \(k=0\), select the repeated pair indicated in the
preceding paragraph. At depth \(k\geq1\), select the repeated ghost
pair $\xi_{i,p}^{(1)},\xi_{i,p}^{(1)}$. Besides the covariance budgets already displayed, equality in law gives
\[
\sum_{i=1}^n
\mathbb E\!\left[
\xi_{i,p}^{(h)}
\bigl(\xi_{i,p}^{(h)}\bigr)^\top
\right]
=
\frac{1-s}{2}I_r
\preceq
\frac12I_r
\]
and
\[
\max_{j\leq2d}
\left|
\left\langle u_j,\xi_{i,p}^{(h)}\right\rangle
\right|
\leq
2b_{d,n}
\qquad\text{almost surely}
\]
for every \(1\leq p\leq3\) and \(0\leq h\leq K_Q\).

The local shift for pattern \(p\) at ghost depth \(k\) is
\begin{align*}
L_{i,k}^{G,p}
&=
t\xi_i+
\sum_{\ell=1}^k
v_\ell\xi_{i,p}^{(\ell)},
\\
\max_{j\leq2d}
\left|
\left\langle u_j,L_{i,k}^{G,p}\right\rangle
\right|
&\leq
2(k+1)b_{d,n}.
\end{align*}
For each \(p\), realize the recoupled common state as
\[
H_s^{(p)}
=
\overline H_s^{(p)}+\sigma_sG^{(p)},
\qquad
\sigma_s^2=\frac{1+s}{2}\geq\frac12,
\]
where \(G^{(p)}\sim N(0,I_r)\),
\[
\overline H_s^{(p)}
\stackrel d=
2\sqrt{1-s}\sum_{\ell=1}^nD_\ell Y_\ell,
\]
and \(\overline H_s^{(p)}\) and \(G^{(p)}\) are independent of each
other and of all direction and ghost arrays. Applying
Lemma~\ref{lem:common-base-repeated-pair} with
\(H=\overline H_s^{(p)}\) and \(G=G^{(p)}\), followed by the same
factorial calculation as in
Lemma~\ref{lem:nonterminal-common-base}, bounds the nonterminal
contribution of order \(q\) for each of the three patterns by
\begin{align*}
Cb_{d,n}^2\beta^2\log(4d)
q^5(C^*q\beta b_{d,n})^{q-4}.
\end{align*}
For the terminal Taylor remainder, put $m_k=4+2k$ and $p_k=Q-m_k$. Exactly as in the explicit calculation in Lemma~\ref{lem:terminal-order-Q}, its absolute value for one row, depth, and pattern is bounded by
\begin{align*}
\frac{C_0^QQ!}{p_k!}\beta^Q
(2b_{d,n})^{m_k}
\{2(k+1)b_{d,n}\}^{p_k}
&\leq
(C^*Q\beta b_{d,n})^Q.
\end{align*}
The terminal ghost term has \(Q\) bounded directions and obeys the
same estimate. Summing over three patterns, fewer than \(Q\) depths,
and \(n\) rows shows that the total absolute terminal contribution is
bounded by
\begin{align}
CnQ^2(C^*Q\beta b_{d,n})^Q.
\label{eq:T-to-G-terminal-bound}
\end{align}
Thus the second-path contraction and terminal bounds have now been verified with the path-specific omitted increment, common state, repeated pair, and covariance budget required here.

The same selector-derivative, Gumbel-cell, and convex-partition calculation as in Lemma~\ref{lem:nonterminal-common-base} yields the upper bound of the nonterminal contribution of order $q$
\begin{align*}
C{\color{black}b_{d,n}}^2\beta^2\log(4d)q^5(C^*q\beta {\color{black}b_{d,n}})^{q-4}.
\end{align*}
Hence
\begin{align*}
\sum_{q=4}^{Q-1}
C{\color{black}b_{d,n}}^2\beta^2\log(4d)q^5(C^*q\beta {\color{black}b_{d,n}})^{q-4}
&\leq
C{\color{black}b_{d,n}}^2\beta^2\log(4d).
\end{align*}
Combining the preceding nonterminal estimate with
Eq.~\eqref{eq:T-to-G-terminal-bound}, and observing that the generator
coefficients and the \(t\)-integration weights are bounded by
universal constants, proves the result.
\end{proof}

\section{Matrix-weighted Gaussian surface bound}
\label{sec:surface-appendix}

\begin{lemma}
\label{lem:unbounded-Gauss-Green}
Let \(r\geq1\), and let \(P\subseteq\mathbb R^r\) be a
full-dimensional polyhedron with facets \(F_1,\ldots,F_{m_P}\) and
outward unit normals \(u_1,\ldots,u_{m_P}\). Let
\(V:\mathbb R^r\to\mathbb R^r\) be continuously differentiable.
Suppose that there exist \(C>0\) and \(k\in\mathbb N_0\) such that,
for every \(x\in\mathbb R^r\),
\begin{align*}
\|V(x)\|_2+|\operatorname{div}V(x)|
&\leq
C(1+\|x\|_2^k)\varphi_r(x).
\end{align*}
Then
\begin{align*}
\int_P\operatorname{div}V(x)\,dx
&=
\sum_{j=1}^{m_P}
\int_{F_j}V(x)^\top u_j\,d\mathcal H^{r-1}(x).
\end{align*}
\end{lemma}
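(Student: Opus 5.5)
The plan is to reduce to the classical divergence theorem on bounded Lipschitz domains by truncating with a smooth cutoff. The Gaussian decay hypothesis then lets us remove the cutoff by dominated convergence.

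Fix a smooth radial cutoff $\chi\in C_c^\infty(\mathbb R^r)$ with $0\leq\chi\leq1$, $\chi=1$ on the unit ball, $\chi=0$ outside the ball of radius $2$, and $\|\nabla\chi\|_\infty\leq2$. Set $\chi_R(x)=\chi(x/R)$ and $V_R=\chi_RV$. Then $V_R$ is $C^1$ with compact support.

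The first step is the truncated identity
\[
\int_P\operatorname{div}V_R\,dx=\sum_{j=1}^{m_P}\int_{F_j}V_R^\top u_j\,d\mathcal H^{r-1}.
\]
To get it, apply the divergence theorem on the bounded convex set $P\cap B(0,3R)$. This set is a bounded convex body with nonempty interior, hence a Lipschitz domain. Its boundary consists of the pieces $F_j\cap B(0,3R)$, with outward normal $u_j$ $\mathcal H^{r-1}$-a.e., together with a piece of the sphere $\partial B(0,3R)\cap P$. On the spherical piece $V_R$ vanishes, so that piece contributes nothing. Lower-dimensional faces (ridges) are $\mathcal H^{r-1}$-null and do not contribute. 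Moreover, $\partial P$ equals the union of the facets up to an $\mathcal H^{r-1}$-null set, and on each facet the outward normal is $u_j$.

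The second step is to let $R\to\infty$. Write
\[
\operatorname{div}V_R=\chi_R\operatorname{div}V+\nabla\chi_R^\top V.
\]
The first term converges to $\operatorname{div}V$ pointwise and is dominated by $C(1+\|x\|^k)\varphi_r(x)$, which is integrable on $\mathbb R^r$. The second term is bounded by $(2/R)\,C(1+\|x\|^k)\varphi_r(x)$, so its integral tends to $0$. On each facet, $V_R^\top u_j\to V^\top u_j$ pointwise, with domination by $C(1+\|x\|^k)\varphi_r(x)$.

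The main point to verify is that this dominating function is $\mathcal H^{r-1}$-integrable on each facet $F_j$. A facet lies in an affine hyperplane $\{\langle u_j,x\rangle=a_j\}$. Decompose $x=a_ju_j+y$ with $y\in u_j^\perp$; then $\varphi_r(x)=\varphi_1(a_j)\varphi_{r-1}(y)$ and $\|x\|^k\leq C_k(|a_j|^k+\|y\|^k)$. Hence the integral over the whole hyperplane is finite, because Gaussian moments are finite in dimension $r-1$. The case $r=1$ is handled by counting measure on points and is trivial.

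Dominated convergence on both sides then yields the stated identity, with both sides absolutely convergent.
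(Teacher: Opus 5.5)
Your proof is correct and follows essentially the same route as the paper: restrict to a bounded convex (hence Lipschitz) domain, apply the classical Gauss--Green formula there, and remove the truncation by dominated convergence using the Gaussian envelope. The difference is cosmetic. The paper truncates sharply to $\operatorname{int}(P)\cap B_R$ and shows the spherical flux is $O\bigl(R^{r-1}(1+R^k)e^{-R^2/2}\bigr)$. Your smooth cutoff instead removes the spherical boundary term, at the cost of a volume term $\nabla\chi_R^\top V$ of size $O(1/R)$ times an integrable envelope. You also verify explicitly the $\mathcal H^{r-1}$-integrability of the envelope on each facet, which the paper leaves implicit.
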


\begin{proof}
Apply the Gauss-Green formula on the bounded Lipschitz domain $\operatorname{int}(P)\cap B_R$; see \citet{evans-gariepy-2015}. Its boundary consists, modulo $\mathcal H^{r-1}$-null intersections, of the flat pieces $F_j\cap B_R$ and the artificial spherical piece $P\cap\partial B_R$. The absolute spherical flux is bounded by
\begin{align*}
C\int_{\partial B_R}(1+\|x\|_2^k)\varphi_r(x)d\mathcal H^{r-1}(x)
&\leq
C_rR^{r-1}(1+R^k)e^{-R^2/2}
\longrightarrow0.
\end{align*}
The volume and flat-facet integrals converge by dominated convergence, proving the identity.
\end{proof}

\begin{proposition}
\label{prop:matrix-surface}
There exists a universal constant \(C>0\) such that the following
holds. Let \(r\ge1\) and \(m_P\ge1\), and let $P
=
\bigcap_{j=1}^{m_P}
\left\{
x\in\mathbb R^r:
\langle u_j,x\rangle\le a_j
\right\}$ be a nonempty, proper, full-dimensional polyhedron with an
irredundant representation, where \(u_1,\ldots,u_{m_P}\) are unit
outward normals. Let \(B_1,\ldots,B_{m_P}\) be symmetric matrices. Then
\begin{align}
\left|
\sum_{j=1}^{m_P}\int_{F_j}\operatorname{tr}\{(xx^\top-I_r)B_j\}\varphi_r(x)d\mathcal H^{r-1}(x)
\right|
&\leq
C\left(\max_{j\leq m_P}\|B_j\|_{\mathrm{op}}\right)\{\log(2m_P)\}^{3/2}.
\label{eq:matrix-surface-bound}
\end{align}
No boundedness assumption is imposed on \(P\).
\end{proposition}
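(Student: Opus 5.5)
Write $\bar B=\max_j\|B_j\|_{\mathrm{op}}$ and split each facet integrand using the block decomposition of $B_j$ relative to $\operatorname{span}(u_j)\oplus E_j$ from the notation of Appendix~\ref{sec:additional-notation}. For $x\in F_j$ we write $x=a_ju_j+y$ with $y\in Q_j$. This gives
\[
\operatorname{tr}\{(xx^\top-I_r)B_j\}=\alpha_j(a_j^2-1)+2a_j\langle v_j,y\rangle+\operatorname{tr}\{(yy^\top-I_{E_j})C_j\},
\]
and $\varphi_r(x)=\varphi_1(a_j)\varphi_{E_j}(y)$. The $j$th facet term therefore equals
\[
s_j\Bigl[\alpha_j(a_j^2-1)+2a_j\langle v_j,\mu_j\rangle+\operatorname{tr}\{(K_j+\mu_j\mu_j^\top-I_{E_j})C_j\}\Bigr].
\]
By Lemma~\ref{lem:convex-conditioning}, applied on $E_j$ with the convex set $Q_j$, we have $K_j\preceq I_{E_j}$. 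Hence the last trace is bounded by $\bar B(\Delta_j+\|\mu_j\|_2^2)$. Every coefficient is at most $\bar B$ in absolute value, so the whole sum is bounded by
\[
\bar B\sum_j s_j\bigl\{|a_j^2-1|+2|a_j|\|\mu_j\|_2+\Delta_j+\|\mu_j\|_2^2\bigr\}.
\]
The task reduces to showing that this Gaussian-weighted facet sum is $O(\{\log(2m_P)\}^{3/2})$, uniformly in $r$ and in the shape of $P$.

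\emph{Normal terms.} Gauss--Green (Lemma~\ref{lem:unbounded-Gauss-Green}) with $V=\nabla\varphi_r$ and $V=x\varphi_r$ gives surface identities for $P$. Combined with the Gaussian perimeter bound $\sum_j s_j=\gamma_r^+(P)\le C\sqrt{\log(2m_P)}$ (Nazarov), this controls $\sum_j s_j$ and $\sum_j s_j|a_j|$. Facets with $|a_j|\ge C\sqrt{\log(2m_P)}$ contribute at most $m_P\varphi_1(a_j)(1+a_j^2)$, which is negligible. The remaining facets have $|a_j|\lesssim\sqrt{\log m_P}$, so $\sum_j s_j|a_j^2-1|\lesssim\log(2m_P)\cdot\sqrt{\log(2m_P)}$. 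This already produces the $3/2$ power.

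\emph{Tangential terms.} Within $E_j$, the entropy inequality used in Lemma~\ref{lem:convex-cell-contraction} gives $\|\mu_j\|_2^2\le 2\log(1/p_j)$, and the law of total covariance gives $\Delta_j\le\|\mu_j\|^2$-type bounds. A crude bound $s_j\log(1/p_j)$ is not summable without further work. Instead I would use a second Gauss--Green step inside each facet section $Q_j\subset E_j$, with $V=y\varphi_{E_j}$ and $V=\nabla\varphi_{E_j}$ and the constant $b=1$. This rewrites $p_j\Delta_j$ and $p_j\|\mu_j\|_2^2$ in terms of the ridge masses $r^{(j)}_{jk}$ and the offsets $d_{k\mid j}$, which moves the problem to codimension-two ridges $F_{jk}$. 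Each ridge is shared by exactly two facets. Rewriting $\varphi_1(a_j)r^{(j)}_{jk}$ through $\rho_{jk}$ and the angle $\alpha_{jk}$ allows the two facet contributions to be paired. The needed bound is
\[
\sum_{\{j,k\}}\rho_{jk}\times(\text{angle factor})\lesssim\log(2m_P).
\]
I plan to get this from the Gaussian measure of the normal-cone regions $\mathcal N_{jk}$, which are disjoint across ridges: $q_{jk}$ is comparable to $\rho_{jk}$ times a function of $\alpha_{jk}$ and of the location $b_{jk}$, and $\sum q_{jk}\le 1$. Together with a cutoff for ridges far from the origin, as in the normal step, this should give the ridge sum. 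The cross term $2|a_j|\|\mu_j\|_2$ is handled by Cauchy--Schwarz between the normal and tangential sums.

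\emph{Main obstacle.} The hard step is the ridge comparison: obtaining $\sum_{\{j,k\}}\rho_{jk}\,g(\alpha_{jk})\lesssim\log(2m_P)$ with a dimension-free constant, especially for nearly flat ridges where $\alpha_{jk}\to 0$ and $\mathcal N_{jk}$ degenerates. For this I would first try bounding $q_{jk}$ from below by integrating $\varphi_r$ over $z+t b_{jk}$, for $z\in F_{jk}$ and $t$ in a window of length proportional to $\alpha_{jk}$. The Gaussian factor $e^{-t\langle z,b_{jk}\rangle}$ is then controlled by the same $\sqrt{\log m_P}$ truncation of $|a_j|$ and $|a_k|$. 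If that fails, the fallback is to avoid ridges altogether and bound $\sum_j s_j\log(1/p_j)$ directly, via a Gumbel/cell-type partition argument in the spirit of Lemma~\ref{lem:convex-cell-contraction}, applied to the outward normal fan of $P$.
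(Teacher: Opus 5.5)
Your overall architecture is the paper's: the same facet decomposition with $K_j\preceq I_{E_j}$, Nazarov plus a $\sqrt{8\log(2m_P)}$ cutoff for the normal terms, and Gauss--Green with $y\varphi_{E_j}(y)$ inside $Q_j$ to turn the covariance deficit into a paired ridge sum. The ridge sum is then controlled, as in the paper, by disjoint normal strata with $\sum q_{jk}\le1$ and a local lower bound on the two-dimensional cone mass.

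There is, however, a genuine hole in the tangential step: nothing in your plan bounds $\sum_j s_j\|\mu_j\|_2^2$. You need this term three times: directly, inside $\Delta_j$, and in the Cauchy--Schwarz cross term. You set aside the entropy bound as ``not summable'' and propose to recover $p_j\|\mu_j\|_2^2$ from a second Gauss--Green identity. That cannot work as stated. Since $\nabla\varphi_{E_j}(y)=-y\varphi_{E_j}(y)$, your two fields are the same field. The only identity it yields is $p_j(\Delta_j-\|\mu_j\|_2^2)=\sum_k d_{k\mid j}r^{(j)}_{jk}$, which controls the difference, not $\|\mu_j\|_2^2$ itself. Likewise, the law of total covariance is a partition identity and gives no per-set bound $\Delta_j\lesssim\|\mu_j\|_2^2$: a thin centered slab has $\mu_j=0$ and $\Delta_j\approx1$.

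The missing step is short, and it is exactly the ``crude'' bound you set aside. Since $s_j=\varphi_1(a_j)p_j$, one has $p_j\ge\sqrt{2\pi}s_j$. With $S=\sum_j s_j\le C\sqrt{\log(2m_P)}$, Jensen's inequality gives
\[
\sum_j s_j\log\frac{e}{p_j}\le\sum_j s_j\log\frac{C}{s_j}\le S\log\frac{Cm_P}{S}\le C\{\log(2m_P)\}^{3/2},
\]
using monotonicity of $x\mapsto x\log(Cm_P/x)$ on the relevant range. Combined with $\|\mu_j\|_2^2\le2\log(1/p_j)$, this gives $\sum_j s_j\|\mu_j\|_2^2\le C\{\log(2m_P)\}^{3/2}$. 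After that, your ridge identity, the positive-part bound on the signed ridge sum, and Cauchy--Schwarz close the argument as in the paper.

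A smaller point concerns your cone-mass step. A cutoff on $|a_j|,|a_k|$ does not control $\langle z_{jk},b_{jk}\rangle=(a_j+a_k)/(2\cos(\alpha_{jk}/2))$ when $\alpha_{jk}\to\pi$, for example a thin wedge whose apex is far away. Instead, take the sector radius $(1+\|z_{jk}\|_2)^{-1}$, which gives $\rho_{jk}\sin(\alpha_{jk}/2)\le C(1+\|z_{jk}\|_2)^2q_{jk}$. Then truncate on $\|z_{jk}\|_2$, using $\rho_{jk}\le\varphi_2(z_{jk})$ beyond the cutoff.
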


\paragraph{Proof roadmap.}
The proof separates the contribution of each facet into five scalar quantities. Lemma~\ref{lem:facet-decomposition} writes the matrix-weighted integral over $F_j$ in terms of the Gaussian surface mass $s_j$, the threshold $a_j$, the conditional mean $\mu_j$, and the conditional covariance deficit $\Delta_j$. Lemma~\ref{lem:surface-scalar-budgets} controls the total surface, threshold, entropy, and mean terms by $\{\log(2m_P)\}^{3/2}$ using Nazarov's inequality, the entropy variational formula, and Gaussian tail decay.

The remaining term is $\sum_js_j\Delta_j$. Lemma~\ref{lem:ridge-identity} applies Gauss-Green inside each tangent section $Q_j$ and expresses this covariance deficit as a signed sum over codimension-two ridges. Lemma~\ref{lem:normal-strata} associates each ridge with its normal-projection stratum; these strata are disjoint, and their Gaussian masses share the same tangential factor as the ridge surface masses. Lemma~\ref{lem:local-cone-mass} supplies the two-dimensional cone probability needed to compensate for nearly parallel normals. The resulting ridge budget is proved in Lemma~\ref{lem:ridge-budget}. Summing the facet decomposition and these scalar budgets completes the proposition. Unbounded polyhedra are covered by Lemma~\ref{lem:unbounded-Gauss-Green}; no artificial spherical facet remains in the final facet count.

\begin{lemma}
\label{lem:facet-decomposition}
Suppose $\max_{j\leq m_P}\|B_j\|_{\mathrm{op}}\leq1$. Then, for every facet $F_j$,
\begin{align}
\int_{F_j}\operatorname{tr}\{(xx^\top-I_r)B_j\}\varphi_r(x)d\mathcal H^{r-1}(x)
&=
s_j\left[\alpha_j(a_j^2-1)+2a_jv_j^\top\mu_j+\operatorname{tr}\{C_j(K_j-I_{E_j}+\mu_j\mu_j^\top)\}\right],
\label{eq:facet-decomposition}
\end{align}
and its absolute value is bounded by
\begin{align*}
s_j\{1+a_j^2+2|a_j|\|\mu_j\|_2+\|\mu_j\|_2^2+\Delta_j\}.
\end{align*}
\end{lemma}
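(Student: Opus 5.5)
The plan is to parametrize each facet in the orthogonal coordinates $\operatorname{span}(u_j)\oplus E_j$, where the Gaussian factorizes. On $F_j$ every point is $x=a_ju_j+y$ with $y\in Q_j\subseteq E_j$, and $\mathcal H^{r-1}$ restricted to $F_j$ is the pushforward of Lebesgue measure on $E_j$. Because $\|x\|_2^2=a_j^2+\|y\|_2^2$, we get
\begin{align*}
\varphi_r(x)=\varphi_1(a_j)\,\varphi_{E_j}(y).
\end{align*}
The facet integral is therefore $\varphi_1(a_j)\int_{Q_j}\operatorname{tr}\{(xx^\top-I_r)B_j\}\varphi_{E_j}(y)\,dy$, which equals $s_j$ times the conditional expectation of $\operatorname{tr}\{(xx^\top-I_r)B_j\}$ for $y\sim\gamma_{E_j}(\cdot\mid Q_j)$. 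This uses $p_j=\gamma_{E_j}(Q_j)$ and $s_j=\varphi_1(a_j)p_j$. If $p_j=0$, both sides vanish and there is nothing to prove.

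Next, expand the integrand in block form. We have $xx^\top=a_j^2u_ju_j^\top+a_j(u_jy^\top+yu_j^\top)+yy^\top$ and $I_r=u_ju_j^\top+I_{E_j}$. With $B_j$ written in the block form with entries $\alpha_j,v_j,C_j$, this gives
\begin{align*}
\operatorname{tr}\{(xx^\top-I_r)B_j\}=\alpha_j(a_j^2-1)+2a_jv_j^\top y+\operatorname{tr}\{C_j(yy^\top-I_{E_j})\}.
\end{align*}
Taking the conditional expectation replaces $y$ by $\mu_j$ and $yy^\top$ by $K_j+\mu_j\mu_j^\top$, which yields Eq.~\eqref{eq:facet-decomposition}. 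Integrability holds because $\varphi_{E_j}$ has all moments, so the conditional mean and covariance are finite.

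For the bound, $\|B_j\|_{\mathrm{op}}\le1$ gives $|\alpha_j|\le1$, $\|v_j\|_2\le1$, and $\|C_j\|_{\mathrm{op}}\le1$, since these are compressions of $B_j$. Hence $|\alpha_j(a_j^2-1)|\le 1+a_j^2$ and $|2a_jv_j^\top\mu_j|\le2|a_j|\|\mu_j\|_2$. For the last term, split it as $\operatorname{tr}(C_j\mu_j\mu_j^\top)$, which is at most $\|\mu_j\|_2^2$ in absolute value, plus $\operatorname{tr}\{C_j(K_j-I_{E_j})\}$. To handle the latter I need $I_{E_j}-K_j\succeq0$. This follows from Lemma~\ref{lem:convex-conditioning} applied on $E_j$, since $Q_j$ is convex as an affine section of the polyhedron. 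Nuclear/operator duality then gives $|\operatorname{tr}\{C_j(K_j-I_{E_j})\}|\le\operatorname{tr}(I_{E_j}-K_j)=\Delta_j$.

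The only delicate point is the measure-theoretic identification of $\mathcal H^{r-1}$ on $F_j$ with Lebesgue measure on the affine hyperplane. This is immediate because the translate of $E_j$ by $a_ju_j$ is an isometric copy of $E_j$.
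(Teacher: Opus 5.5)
Your proposal is correct and follows the paper's proof essentially step for step. You factor $\varphi_r(a_ju_j+y)=\varphi_1(a_j)\varphi_{E_j}(y)$ and expand the trace in the $\operatorname{span}(u_j)\oplus E_j$ block form. You integrate over $Q_j$ to get $p_j\mu_j$ and $p_j(K_j+\mu_j\mu_j^\top)$, and bound the result using the block norms of $B_j$ together with $K_j\preceq I_{E_j}$ from Lemma~\ref{lem:convex-conditioning}. (A minor wording point: $v_j$ is an off-diagonal block rather than a compression, but $\|v_j\|_2\le\|B_ju_j\|_2\le\|B_j\|_{\mathrm{op}}\le1$ still holds.)
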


\begin{proof}
Every $x\in F_j$ has the unique representation $x=a_ju_j+y$ with $y\in Q_j\subseteq E_j$, and
\begin{align*}
\varphi_r(a_ju_j+y)
&=
\varphi_1(a_j)\varphi_{E_j}(y).
\end{align*}
Relative to $\operatorname{span}(u_j)\oplus E_j$,
\begin{align*}
xx^\top-I_r
&=
\begin{pmatrix}
a_j^2-1&a_jy^\top\\
a_jy&yy^\top-I_{E_j}
\end{pmatrix}.
\end{align*}
Integrating the four blocks over $Q_j$ gives
\begin{align*}
\int_{Q_j}y\varphi_{E_j}(y)dy
=
p_j\mu_j,
\quad
\int_{Q_j}yy^\top\varphi_{E_j}(y)dy
=
p_j(K_j+\mu_j\mu_j^\top),
\end{align*}
which proves Eq.~\eqref{eq:facet-decomposition}. Lemma~\ref{lem:convex-conditioning} gives $K_j\preceq I_{E_j}$, so
\begin{align*}
|\operatorname{tr}\{C_j(K_j-I_{E_j}+\mu_j\mu_j^\top)\}|
&\leq
\operatorname{tr}(I_{E_j}-K_j)+\|\mu_j\|_2^2
=
\Delta_j+\|\mu_j\|_2^2.
\end{align*}
The block norms of $B_j$ are at most one, giving the stated bound.
\end{proof}

\begin{lemma}
\label{lem:surface-scalar-budgets}
There exists a universal constant \(C>0\) such that, for every
polyhedral surface configuration in the sense of
Definition~\ref{def:polyhedral-surface-configuration}, all of the
following inequalities hold:
\begin{align*}
\sum_{j=1}^{m_P}s_j \leq
C\sqrt{\log(2m_P)},
\quad
\sum_{j=1}^{m_P}s_j\log\left(\frac{e}{p_j}\right)
\leq
C\{\log(2m_P)\}^{3/2},
\end{align*}
\begin{align*}
\sum_{j=1}^{m_P}s_ja_j^2
\leq
C\{\log(2m_P)\}^{3/2},
\quad
\sum_{j=1}^{m_P}s_j\|\mu_j\|_2^2
\leq
C\{\log(2m_P)\}^{3/2},
\quad
\sum_{j=1}^{m_P}s_j|a_j|\|\mu_j\|_2
\leq
C\{\log(2m_P)\}^{3/2}.
\end{align*}
\end{lemma}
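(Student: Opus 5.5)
The five estimates in Lemma~\ref{lem:surface-scalar-budgets} all rest on one probabilistic identity. For $G\sim N(0,I_r)$ and a small width $\varepsilon>0$, the facet $F_j$ carries Gaussian surface mass
\[
s_j=\lim_{\varepsilon\downarrow0}\varepsilon^{-1}\mathbb P\{G\in\mathcal S_j(\varepsilon)\},
\qquad
\mathcal S_j(\varepsilon)=\{x\notin P:\ \langle u_j,x\rangle\in(a_j,a_j+\varepsilon],\ x-(\langle u_j,x\rangle-a_j)u_j\in F_j\}.
\]
The plan is to establish this identity first and then reduce each sum to a Gaussian expectation over the boundary layer.

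\textbf{Step 1 (total surface mass).} The slabs $\mathcal S_j(\varepsilon)$ sit in the outer shell $\{0<\max_j(\langle u_j,x\rangle-a_j)\le\varepsilon\}$. They overlap only on sets of probability $O(\varepsilon^2)$, namely near ridges, because the representation is irredundant and the slabs are orthogonal prisms over disjoint facets. Nazarov's inequality, exactly as in Lemma~\ref{lem:conditional-nazarov} with $\sigma=1$ and $m_P$ unit normals, bounds the shell probability by $C\varepsilon\sqrt{\log(2m_P)}$. Dividing by $\varepsilon$ and letting $\varepsilon\to0$ gives $\sum_js_j\le C\sqrt{\log(2m_P)}$.

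\textbf{Step 2 (weighted sums via disintegration).} The idea is to view $w_j=s_j/\sum_k s_k$ as a probability distribution on facets, and to view $(a_j,y)$, where $y$ is drawn from the normalised measure $\varphi_{E_j}\mathbf 1_{Q_j}/p_j$, as a surface-sampled point. For the threshold term, $s_j=\varphi_1(a_j)p_j$ gives $s_ja_j^2\le \varphi_1(a_j)a_j^2$. I split the facets into two groups.
\begin{enumerate}
\item If $|a_j|\le C\sqrt{\log(2m_P)}$, the bound $s_ja_j^2\le C\log(2m_P)\,s_j$ holds, and Step 1 finishes this group.
\item If $|a_j|>C\sqrt{\log(2m_P)}$, the crude bound $\varphi_1(a_j)a_j^2\le C(2m_P)^{-2}$ holds, and summing over at most $m_P$ facets makes this group negligible.
\end{enumerate}

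\textbf{Step 3 (entropy and mean terms).} For the entropy term, $s_j\log(e/p_j)=\varphi_1(a_j)\,p_j\log(e/p_j)$. I use two elementary bounds: $p\log(e/p)$ is bounded by $1$, and $p\log(e/p)\le p\log(e m_P)+1/m_P$. I then split according to whether $p_j\ge m_P^{-2}$.
\begin{enumerate}
\item If $p_j\ge m_P^{-2}$, then $\log(e/p_j)\le C\log(2m_P)$, and Step 1 gives a total of order $\log(2m_P)\sqrt{\log(2m_P)}$.
\item If $p_j<m_P^{-2}$, each term is at most $Cm_P^{-2}\log(em_P^2)$, and summing over at most $m_P$ facets gives $o(1)$.
\end{enumerate}
This is the source of the $\{\log(2m_P)\}^{3/2}$ power.

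For the mean term, the entropy variational inequality already used in Lemma~\ref{lem:convex-cell-contraction} gives $\|\mu_j\|_2^2\le2\log(1/p_j)$. The sum $\sum_js_j\|\mu_j\|_2^2$ is therefore dominated by twice the entropy sum. The cross term follows from $2|a_j|\|\mu_j\|_2\le a_j^2+\|\mu_j\|_2^2$ and the two bounds already obtained.

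\textbf{Main obstacle.} The delicate point is Step 1's claim that the facet slabs are essentially disjoint and lie in the outer shell, uniformly over unbounded polyhedra with arbitrary facet angles. Near-parallel adjacent normals produce thin wedges where prisms over neighbouring facets can overlap. I would handle this by replacing the prisms with the nearest-point projection regions onto $P$. These are disjoint by uniqueness of metric projection onto a convex set. The region projecting into $\operatorname{relint}F_j$ at distance at most $\varepsilon$ has Gaussian mass $\varepsilon s_j+o(\varepsilon)$; to justify this I would use Lemma~\ref{lem:unbounded-Gauss-Green}-type tail control for unbounded facets. All these regions lie within distance $\varepsilon$ of $P$, which is contained in the shell above, so Nazarov applies.
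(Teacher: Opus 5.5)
Your proof is correct and follows essentially the same route as the paper. The shared steps are: Nazarov's inequality on the outer shell $\{0<\max_j(\langle u_j,x\rangle-a_j)\le\varepsilon\}$ for $\sum_j s_j$; the entropy bound $\|\mu_j\|_2^2\le2\log(1/p_j)$, which reduces the mean term to the entropy term; a split at $|a_j|\asymp\sqrt{\log(2m_P)}$ for the threshold term; and an elementary inequality for the cross term. The differences are minor. You identify $\sum_j s_j$ through normal prisms over the facets rather than through the first variation of $\gamma_r(P^\varepsilon)$. These prisms are exactly disjoint modulo null sets by uniqueness of the metric projection, so the overlap worry in your last paragraph does not arise. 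You also bound the entropy sum by splitting at $p_j\ge m_P^{-2}$ and using monotonicity of $p\log(e/p)$, instead of the paper's $p_j\ge\sqrt{2\pi}s_j$ combined with concavity of $x\mapsto x\log(C/x)$.
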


\begin{proof}
For $\varepsilon\geq0$, let $P^\varepsilon=\bigcap_j\{x:\langle u_j,x\rangle\leq a_j+\varepsilon\}$. The Gaussian first-variation formula, obtainable from the coarea formula; see {\color{black}\citet{evans-gariepy-2015}}, identifies the right derivative of $\gamma_r(P^\varepsilon)$ at zero with the total Gaussian surface mass. Nazarov's inequality then gives
\begin{align*}
\sum_{j=1}^{m_P}s_j
&=
\left.\frac{d}{d\varepsilon}\gamma_r(P^\varepsilon)\right|_{0+}
\leq
C\sqrt{\log(2m_P)}.
\end{align*}
For the conditional mean, the entropy variational formula gives, for every $t\in E_j$,
\begin{align*}
t^\top\mu_j
&\leq
\frac12\|t\|_2^2+\log(1/p_j).
\end{align*}
Taking $t=\mu_j$ yields
\begin{align*}
\|\mu_j\|_2^2
&\leq
2\log(1/p_j).
\end{align*}
Since $s_j=\varphi_1(a_j)p_j\leq(2\pi)^{-1/2}p_j$, $p_j\geq\sqrt{2\pi}s_j$. Then, by concavity of $x\mapsto x\log(C/x)$,
\begin{align*}
\sum_js_j\log\left(\frac{e}{p_j}\right)
&\leq
\sum_js_j\log\left(\frac{C}{s_j}\right)
\leq
\left(\sum_js_j\right)\log\left\{\frac{Cm_P}{\sum_js_j}\right\}
\leq
C\{\log(2m_P)\}^{3/2}.
\end{align*}
This and the conditional-mean bound imply
\begin{align*}
\sum_js_j\|\mu_j\|_2^2
&\leq
C\{\log(2m_P)\}^{3/2}.
\end{align*}
On $|a_j|\leq\sqrt{8\log(2m_P)}$,
\begin{align*}
\sum_{|a_j|\leq R}s_ja_j^2
&\leq
8\log(2m_P)\sum_js_j
\leq
C\{\log(2m_P)\}^{3/2}.
\end{align*}
On $|a_j|>\sqrt{8\log(2m_P)}$, $s_j\leq\varphi_1(a_j)$ and $t^2\varphi_1(t)$ is decreasing for $t\geq R$, so
\begin{align*}
\sum_{|a_j|>R}s_ja_j^2
&\leq
8m_P\log(2m_P)\varphi_1\{\sqrt{8\log(2m_P)}\}
\leq
C.
\end{align*}
Thus $\sum_js_ja_j^2\leq C\{\log(2m_P)\}^{3/2}$, and Cauchy-Schwarz gives
\begin{align*}
\sum_js_j|a_j|\|\mu_j\|_2
&\leq
\left(\sum_js_ja_j^2\right)^{1/2}
\left(\sum_js_j\|\mu_j\|_2^2\right)^{1/2}
\leq
C\{\log(2m_P)\}^{3/2}.
\end{align*}
\end{proof}

\begin{lemma}
\label{lem:ridge-identity}
For every facet $F_j$,
\begin{align*}
p_j\Delta_j
&=
p_j\|\mu_j\|_2^2+
\sum_{k:F_j\cap F_k\text{ is a ridge}}d_{k\mid j}r_{jk}^{(j)}.
\end{align*}
Moreover, $\varphi_1(a_j)r_{jk}^{(j)}=\rho_{jk}$, every codimension-two ridge belongs to exactly two facets, and
\begin{align}
\sum_{j=1}^{m_P}s_j\Delta_j
&=
\sum_{j=1}^{m_P}s_j\|\mu_j\|_2^2
+
\sum_{\{j,k\}}(d_{k\mid j}+d_{j\mid k})\rho_{jk}.
\label{eq:global-ridge-identity}
\end{align}
\end{lemma}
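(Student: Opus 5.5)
The plan is to apply the Gauss--Green identity of Lemma~\ref{lem:unbounded-Gauss-Green} inside the $(r-1)$-dimensional space $E_j$, to the vector field $V(y)=y\,\varphi_{E_j}(y)$ on the tangent section $Q_j$.

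\emph{Rewriting the left-hand side as a divergence integral.} By the definitions of $\mu_j$ and $K_j$,
\[
\int_{Q_j}\|y\|_2^2\varphi_{E_j}(y)\,dy=p_j\operatorname{tr}(K_j+\mu_j\mu_j^\top).
\]
Since $\operatorname{tr}(I_{E_j})=r-1$, this gives
\[
p_j\Delta_j-p_j\|\mu_j\|_2^2=\int_{Q_j}\{(r-1)-\|y\|_2^2\}\varphi_{E_j}(y)\,dy .
\]
The integrand is exactly $\operatorname{div}_{E_j}\{y\varphi_{E_j}(y)\}$. The field $V$ satisfies the polynomial-times-Gaussian growth hypothesis with $k=2$.

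\emph{Identifying the facets of $Q_j$.} Substitute $x=a_ju_j+y$ into the constraint $\langle u_k,x\rangle\le a_k$. Because $y\perp u_j$, it becomes
\[
\langle u_k-\langle u_j,u_k\rangle u_j,\,y\rangle\le a_k-\langle u_j,u_k\rangle a_j .
\]
For adjacent facets $|\langle u_j,u_k\rangle|<1$. Indeed, $u_k=u_j$ is excluded by irredundancy, and $u_k=-u_j$ would make $F_j\cap F_k$ either empty or of dimension $r-1$, which is incompatible with full-dimensionality. Dividing by $\sqrt{1-\langle u_j,u_k\rangle^2}$ writes this constraint as $\langle n_{k\mid j},y\rangle\le d_{k\mid j}$, with unit outward normal $n_{k\mid j}$ in $E_j$.

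Since $F_j$ is a facet, $Q_j$ is a full-dimensional polyhedron in $E_j$. Its facets are the sets $\{y\in Q_j:\langle n_{k\mid j},y\rangle=d_{k\mid j}\}$ for which $F_j\cap F_k$ has dimension $r-2$, that is, for which $F_j\cap F_k$ is a ridge. The other constraints either are redundant or cut $Q_j$ in sets of dimension at most $r-3$; these have zero $\mathcal H^{r-2}$ measure and contribute nothing.

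\emph{Applying Gauss--Green.} On the facet indexed by $k$ we have $V(y)^\top n_{k\mid j}=d_{k\mid j}\varphi_{E_j}(y)$. Lemma~\ref{lem:unbounded-Gauss-Green} therefore gives
\[
\int_{Q_j}\operatorname{div}V=\sum_k d_{k\mid j}r^{(j)}_{jk},
\]
which is the first identity. The degenerate case $r=1$ is trivial: $E_j=\{0\}$, so $\Delta_j=0$, $\mu_j=0$, and there are no ridges.

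\emph{The relation $\varphi_1(a_j)r^{(j)}_{jk}=\rho_{jk}$.} The affine map $y\mapsto a_ju_j+y$ is an isometry onto $F_{jk}$ when restricted to the relevant face of $Q_j$. It therefore preserves $\mathcal H^{r-2}$. Combined with the factorization $\varphi_r(a_ju_j+y)=\varphi_1(a_j)\varphi_{E_j}(y)$, this gives the relation.

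\emph{Each ridge lies in exactly two facets.} This is the standard diamond property of the face lattice of a polyhedron, applied to the irredundant representation. Near a relative-interior point of an $(r-2)$-face, exactly two constraints are active and essential.

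\emph{The global identity.} Multiply the first identity by $\varphi_1(a_j)$, using $s_j=\varphi_1(a_j)p_j$ and $\varphi_1(a_j)r^{(j)}_{jk}=\rho_{jk}$, and sum over $j$. Grouping the ordered pairs $(j,k)$ and $(k,j)$ for each ridge gives Eq.~\eqref{eq:global-ridge-identity}.

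\emph{Expected main obstacle.} The delicate part is the facet bookkeeping for $Q_j$. One must check that its facets correspond exactly, and without double counting, to the ridges through $F_j$. One must also check that several constraints cutting the same ridge, or touching only in lower dimension, contribute nothing extra. I would handle this through irredundancy together with the dimension count, working at relative-interior points of the ridge.
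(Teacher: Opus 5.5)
Your proposal is correct and follows the paper's proof essentially step for step: Gauss--Green (Lemma~\ref{lem:unbounded-Gauss-Green}) on $Q_j\subseteq E_j$ for the field $y\varphi_{E_j}(y)$, the flux $d_{k\mid j}r^{(j)}_{jk}$ through the ridge-induced faces, the factorization $\varphi_r(a_ju_j+y)=\varphi_1(a_j)\varphi_{E_j}(y)$ giving $\rho_{jk}$, the two-facets-per-ridge fact from the local pointed two-dimensional normal-cone structure, and then multiplying by $\varphi_1(a_j)$ and summing. Your extra bookkeeping fills in details the paper leaves implicit: you exclude $u_k=\pm u_j$, discard intersections of dimension at most $r-3$, and use the two-facet property to rule out double counting of faces of $Q_j$.
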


\begin{proof}
Apply Lemma~\ref{lem:unbounded-Gauss-Green} on $Q_j\subseteq E_j$ to the tangent vector field $y\varphi_{E_j}(y)$. Since
\begin{align*}
\operatorname{div}\{y\varphi_{E_j}(y)\}
&=
\{\dim(E_j)-\|y\|_2^2\}\varphi_{E_j}(y),
\end{align*}
the volume integral is
\begin{align*}
\int_{Q_j}\{\dim(E_j)-\|y\|_2^2\}\varphi_{E_j}(y)dy
&=
p_j\{\Delta_j-\|\mu_j\|_2^2\}.
\end{align*}
On the boundary induced by $F_k$, $\langle y,n_{k\mid j}\rangle=d_{k\mid j}$, and hence the boundary integral equals $\sum_kd_{k\mid j}r_{jk}^{(j)}$. This proves the first identity. Orthogonal Gaussian factorization on the affine ridge gives
\begin{align*}
\varphi_1(a_j)r_{jk}^{(j)}
&=
\rho_{jk}.
\end{align*}
A codimension-two face has a two-dimensional pointed normal cone, and a pointed polyhedral cone in two dimensions has exactly two extreme rays. Under irredundancy, these rays correspond to the two containing facets. Multiplying the first identity by $\varphi_1(a_j)$ and summing over $j$ proves Eq.~\eqref{eq:global-ridge-identity}.
\end{proof}

\begin{lemma}
\label{lem:normal-strata}
The ridge strata $\mathcal N_{jk}$ are pairwise disjoint modulo Gaussian-null boundaries, and
\begin{align*}
\sum_{\{j,k\}}q_{jk}
&\leq
1.
\end{align*}
Moreover,
\begin{align*}
\rho_{jk}
=
p_{jk}\varphi_2(z_{jk}),
\quad
q_{jk}
=
p_{jk}\int_{z_{jk}+C_{jk}}\varphi_2(w)dw,
\end{align*}
and
\begin{align*}
d_{k\mid j}+d_{j\mid k}
&=
2\sin(\alpha_{jk}/2)\langle b_{jk},z_{jk}\rangle.
\end{align*}
\end{lemma}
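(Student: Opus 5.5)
The plan is to prove the three assertions of Lemma~\ref{lem:normal-strata} in the order: the factorization identities, the geometric identity for $d_{k\mid j}+d_{j\mid k}$, and finally the disjointness of the strata, which gives $\sum q_{jk}\le 1$.

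For the factorizations, decompose $\mathbb R^r=N_{jk}\oplus H_{jk}$. Every point of the affine ridge has the unique form $x=z_{jk}+t$ with $t\in T_{jk}\subseteq H_{jk}$. Since $z_{jk}\perp t$, we get $\varphi_r(x)=\varphi_2(z_{jk})\varphi_{H_{jk}}(t)$, and integrating over $T_{jk}$ gives $\rho_{jk}=p_{jk}\varphi_2(z_{jk})$. In the same way, $\mathcal N_{jk}=\{z_{jk}+t+c:t\in\operatorname{relint}T_{jk},\,c\in\operatorname{relint}C_{jk}\}$ is a product set across $H_{jk}\times N_{jk}$. Fubini, with the density factorized as $\varphi_{H_{jk}}\varphi_{N_{jk}}$, then gives $q_{jk}=p_{jk}\int_{z_{jk}+C_{jk}}\varphi_2$. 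The relative boundary of $T_{jk}$ is null, so interiors versus closures do not matter.

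For the identity, compute inside the plane $N_{jk}$. Write $c=\langle u_j,u_k\rangle=\cos\alpha_{jk}$. Then $d_{k\mid j}+d_{j\mid k}=(a_k-ca_j+a_j-ca_k)/\sin\alpha_{jk}=(1-c)(a_j+a_k)/\sin\alpha_{jk}$. Using $a_j+a_k=\langle u_j+u_k,z_{jk}\rangle=2\cos(\alpha_{jk}/2)\langle b_{jk},z_{jk}\rangle$, together with $1-c=2\sin^2(\alpha_{jk}/2)$ and $\sin\alpha_{jk}=2\sin(\alpha_{jk}/2)\cos(\alpha_{jk}/2)$, the expression simplifies to $2\sin(\alpha_{jk}/2)\langle b_{jk},z_{jk}\rangle$. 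This step is routine.

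The main obstacle is disjointness. I would use the metric projection $\Pi_P$ onto the closed convex set $P$. For $x=y+c$ with $y\in\operatorname{relint}F_{jk}$ and $c\in\operatorname{relint}C_{jk}$, the vector $c$ lies in the normal cone of $P$ at $y$. The normal cone at a relative-interior point of the ridge is exactly $\operatorname{cone}(u_j,u_k)$: irredundancy and Lemma~\ref{lem:ridge-identity} say the active constraints there are exactly $j$ and $k$. Hence $\Pi_P(x)=y$, and $x-y$ lies in the relative interior of that normal cone. By uniqueness of the projection, $x$ determines the face $F$ of $P$ with $\Pi_P(x)\in\operatorname{relint}F$. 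The sets $\operatorname{relint}F+\operatorname{relint}N_P(F)$ over distinct faces are therefore disjoint; this is the standard normal fan / Moreau decomposition of $\mathbb R^r$. In particular, strata attached to distinct ridges are disjoint. Their union is a subset of $\mathbb R^r$, so $\sum_{\{j,k\}}q_{jk}\le\gamma_r(\bigcup\mathcal N_{jk})\le 1$. Points on relative boundaries are counted with lower-dimensional faces and have measure zero, which gives the claim modulo null boundaries. The step needing care is the identification of the normal cone at relative-interior ridge points with $C_{jk}$. That identification follows from the local active-set description of $P$, plus the fact that adjacent irredundant normals are not parallel, so $\alpha_{jk}\in(0,\pi)$.
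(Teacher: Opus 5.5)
Your proposal is correct and follows essentially the same route as the paper: orthogonal factorization over $H_{jk}\oplus N_{jk}$ for $\rho_{jk}$ and $q_{jk}$, a direct computation in the plane $N_{jk}$ for $d_{k\mid j}+d_{j\mid k}$, and the metric-projection/normal-cone decomposition of $\mathbb R^r$ for disjointness of the strata. You give more detail than the paper, namely the explicit half-angle computation and the active-set identification of the normal cone at relative-interior ridge points; for disjointness, only the easy inclusion $C_{jk}\subseteq N_P(y)$ and the fact that each ridge lies in exactly two facets are actually needed.
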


\begin{proof}
For a closed convex polyhedron, $x=\Pi_P(y)$ if and only if $y-x$ belongs to the normal cone at $x$. Every $x\in P$ lies in the relative interior of a unique face, and the normal cone is constant on that relative interior; see \citet{rockafellar-wets-1998}. Therefore the sets
\begin{align*}
\operatorname{relint}(F)+\operatorname{relint}N_P(F)
\end{align*}
are disjoint for distinct faces, modulo lower-dimensional boundaries. This proves the stratum disjointness and the probability bound.

The orthogonal decomposition $\mathbb R^r=H_{jk}\oplus N_{jk}$ parametrizes the ridge by $z_{jk}+t$, $t\in T_{jk}$, and the stratum by $z_{jk}+t+v$, $t\in T_{jk}$ and $v\in C_{jk}$. Gaussian density and Hausdorff measure factor under this decomposition, giving
\begin{align*}
\rho_{jk}
&=
\left\{\int_{T_{jk}}\varphi_{H_{jk}}(t)dt\right\}\varphi_2(z_{jk})
=
p_{jk}\varphi_2(z_{jk}),
\end{align*}
and
\begin{align*}
q_{jk}
&=
\left\{\int_{T_{jk}}\varphi_{H_{jk}}(t)dt\right\}
\int_{z_{jk}+C_{jk}}\varphi_2(w)dw
=
p_{jk}\int_{z_{jk}+C_{jk}}\varphi_2(w)dw.
\end{align*}
Finally, substituting the definitions of $d_{k\mid j}$ and $d_{j\mid k}$ and resolving $z_{jk}$ in the plane $N_{jk}$ gives
\begin{align*}
d_{k\mid j}+d_{j\mid k}
&=
2\sin(\alpha_{jk}/2)\left\langle\frac{u_j+u_k}{2\cos(\alpha_{jk}/2)},z_{jk}\right\rangle.
\end{align*}
\end{proof}

\begin{lemma}
\label{lem:local-cone-mass}
There exists a universal constant $c>0$ such that, for every two-dimensional cone $C$ with opening angle $\alpha\in(0,\pi)$ and every $z\in\mathbb R^2$,
\begin{align*}
\int_{z+C}\varphi_2(w)dw
&\geq
c\frac{\sin(\alpha/2)\varphi_2(z)}{(1+\|z\|_2)^2}.
\end{align*}
\end{lemma}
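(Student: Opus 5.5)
We need to prove that for every two-dimensional cone $C$ with opening angle $\alpha\in(0,\pi)$ and every $z$, the Gaussian mass of $z+C$ is at least $c\sin(\alpha/2)\varphi_2(z)/(1+\|z\|_2)^2$.

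\textbf{Plan: exhibit a small explicit subset.} We find an explicit subset of $z+C$ on which the Gaussian density is comparable to $\varphi_2(z)$, and whose area is of order $\sin(\alpha/2)/(1+\|z\|_2)^2$.

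\textbf{Choice of the wedge.} Let $e$ be the unit bisector of $C$, and set $\rho:=1/(1+\|z\|_2)$. Consider the truncated wedge $W:=z+\{w\in C:\|w\|_2\le\rho\}$. Its area equals $\alpha\rho^2/2$. Since $\alpha\geq\sin\alpha\ge 2\sin(\alpha/2)\cos(\alpha/2)$ fails to help near $\alpha=\pi$, use instead directly $\alpha\ge 2\sin(\alpha/2)$, which holds for $\alpha\in(0,\pi)$ because $\sin x\le x$. Hence
\begin{align*}
\mathrm{area}(W)\ \ge\ \sin(\alpha/2)\rho^2 .
\end{align*}

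\textbf{Density comparison on $W$.} For $w=z+v\in W$ with $\|v\|_2\le\rho$,
\begin{align*}
\|w\|_2^2-\|z\|_2^2=2\langle z,v\rangle+\|v\|_2^2\le 2\|z\|_2\rho+\rho^2\le 3,
\end{align*}
using $\|z\|_2\rho\le1$ and $\rho\le1$. Therefore $\varphi_2(w)\ge e^{-3/2}\varphi_2(z)$ on $W$.

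\textbf{Conclusion.} Integrating the density bound over $W$ gives
\begin{align*}
\int_{z+C}\varphi_2\ \ge\ \int_W\varphi_2\ \ge\ e^{-3/2}\sin(\alpha/2)\,\frac{\varphi_2(z)}{(1+\|z\|_2)^2},
\end{align*}
so the lemma holds with $c=e^{-3/2}$.

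No step is a serious obstacle. The only point needing care is that the cone is closed or open only up to a null boundary, which does not affect the area computation. The bound is deliberately crude: it does not exploit cases where $z+C$ points toward the origin. The stated form is exactly what this crude local argument yields, which suggests this is the intended proof.
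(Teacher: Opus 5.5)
Your proof is correct and follows the paper's argument: you restrict to the sector $z+(C\cap B(0,\rho))$ with $\rho=(1+\|z\|_2)^{-1}$, bound the density ratio below by $e^{-3/2}$, and use area $\alpha\rho^2/2\ge\sin(\alpha/2)\rho^2$. The aside about $\alpha\ge\sin\alpha$ ``failing near $\pi$'' is muddled and the bisector $e$ is never used, but neither affects the argument.
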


\begin{proof}
Put $\varepsilon=(1+\|z\|_2)^{-1}$. For $h\in C\cap B(0,\varepsilon)$,
\begin{align*}
\frac{\varphi_2(z+h)}{\varphi_2(z)}
&=
\exp\{-\langle z,h\rangle-\|h\|_2^2/2\}
\geq
e^{-3/2}.
\end{align*}
The sector $C\cap B(0,\varepsilon)$ has area
\begin{align*}
\frac{\alpha\varepsilon^2}{2}
&\geq
\sin(\alpha/2)\varepsilon^2.
\end{align*}
Integrating over this sector proves the bound.
\end{proof}

\begin{lemma}
\label{lem:ridge-budget}
There exists a universal constant \(C>0\) such that, for every
polyhedral surface configuration in the sense of
Definition~\ref{def:polyhedral-surface-configuration} with \(r\geq2\),
\begin{align*}
\sum_{\substack{\{j,k\}\\
F_j\cap F_k\ {\rm is\ a\ ridge}}}
(d_{k\mid j}+d_{j\mid k})_+\rho_{jk}
&\leq
C\{\log(2m_P)\}^{3/2},
\\
\sum_{j=1}^{m_P}s_j\Delta_j
&\leq
C\{\log(2m_P)\}^{3/2}.
\end{align*}
The first sum ranges over unordered adjacent facet pairs.
\end{lemma}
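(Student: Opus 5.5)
The plan is to bound the ridge sum first and then obtain the second inequality from Eq.~\eqref{eq:global-ridge-identity}. That identity gives
\[
\sum_js_j\Delta_j\leq\sum_js_j\|\mu_j\|_2^2+\sum_{\{j,k\}}(d_{k\mid j}+d_{j\mid k})_+\rho_{jk}.
\]
The first term on the right is already at most $C\{\log(2m_P)\}^{3/2}$ by Lemma~\ref{lem:surface-scalar-budgets}. So everything reduces to the ridge budget.

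For a fixed ridge, Lemma~\ref{lem:normal-strata} gives
\[
(d_{k\mid j}+d_{j\mid k})_+\rho_{jk}=2\sin(\alpha_{jk}/2)\langle b_{jk},z_{jk}\rangle_+\,p_{jk}\varphi_2(z_{jk}),
\]
with $\langle b_{jk},z_{jk}\rangle_+\le\|z_{jk}\|_2/\cos(\alpha_{jk}/2)$. Lemma~\ref{lem:local-cone-mass} gives $\sin(\alpha_{jk}/2)p_{jk}\varphi_2(z_{jk})\le C(1+\|z_{jk}\|_2)^2q_{jk}$, so each ridge term is at most $C(1+\|z_{jk}\|_2)^3q_{jk}/\cos(\alpha_{jk}/2)$. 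I will split the ridges into three classes.

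\textbf{Nearly antipodal normals.} When $\alpha_{jk}$ is close to $\pi$, the factor $1/\cos(\alpha_{jk}/2)$ blows up. In this case $\sin(\alpha_{jk}/2)\approx1$, so I will not divide by the cone mass. Instead I will write $2\sin(\alpha/2)\langle b,z\rangle=\tan(\alpha/2)\langle u_j+u_k,z\rangle$ and use $\langle u_j+u_k,z_{jk}\rangle=a_j+a_k$ directly. Since $P$ is nonempty, $a_j+a_k\ge0$. I then expect to bound $(a_j+a_k)\tan(\alpha/2)\varphi_2(z_{jk})$ via $\varphi_1(a_j)$ and $\varphi_1(a_k)$ times a one-dimensional conditional density. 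This should reduce these terms to the facet masses $s_j$ weighted by $|a_j|$ or the entropy factor, which are controlled by Lemma~\ref{lem:surface-scalar-budgets}. This class will need care.

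\textbf{Remaining ridges with small $\|z_{jk}\|_2$.} For the other ridges with $\|z_{jk}\|_2\le R:=C\sqrt{\log(2m_P)}$, the bound becomes $CR^3q_{jk}$, and summing with $\sum q_{jk}\le1$ gives a total of $C\{\log(2m_P)\}^{3/2}$.

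\textbf{Remaining ridges with large $\|z_{jk}\|_2$.} These are tail ridges. Note that $\rho_{jk}\le\varphi_2(z_{jk})\le e^{-\|z\|^2/2}/(2\pi)$. Since $\|z_{jk}\|^2\ge\max(a_j^2,a_k^2)$, up to angle factors, I will bound the tail contribution by facet-level tail masses in the style of the $|a_j|>R$ step of Lemma~\ref{lem:surface-scalar-budgets}. The aim is to charge each ridge to a facet and use $\sum_k\rho_{jk}=\varphi_1(a_j)\sum_kr^{(j)}_{jk}$, with the inner sum bounded by a Gaussian surface (Nazarov) estimate inside $E_j$ of order $\sqrt{\log(2m_P)}$. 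With $m_P$ facets each having $\varphi_1(a_j)\le(2m_P)^{-4}$, the tail total is at most a constant.

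\textbf{Main obstacle.} I expect the main obstacle to be the first class, the interplay between $1/\cos(\alpha/2)$ and the cone mass for nearly antipodal normals. The fallback is the facet-wise Nazarov-inside-$Q_j$ bound $\sum_k r^{(j)}_{jk}\le C\sqrt{\log(2m_P)}$ combined with $|d_{k\mid j}|$ truncated at $R$.
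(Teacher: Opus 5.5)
Your overall skeleton matches the paper's: a per-ridge bound via Lemma~\ref{lem:normal-strata} and Lemma~\ref{lem:local-cone-mass}, a split at $\|z_{jk}\|_2\le R$ with $\sum q_{jk}\le1$ on the bulk, and the reduction of the second inequality through Eq.~\eqref{eq:global-ridge-identity} and Lemma~\ref{lem:surface-scalar-budgets}. As written, however, it has a hole exactly where you place the ``main obstacle'', and that obstacle comes from a lossy estimate. You bound $\langle b_{jk},z_{jk}\rangle_+\le\|z_{jk}\|_2/\cos(\alpha_{jk}/2)$. But $\|u_j+u_k\|_2^2=2+2\cos\alpha_{jk}=4\cos^2(\alpha_{jk}/2)$, so $b_{jk}$ is a \emph{unit} vector, and hence
\[
(d_{k\mid j}+d_{j\mid k})_+=\tan(\alpha_{jk}/2)\,(a_j+a_k)_+\le 2\sin(\alpha_{jk}/2)\,\|z_{jk}\|_2 .
\]
The factor $\sin(\alpha_{jk}/2)$ then cancels exactly against the cone-mass lower bound. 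This gives $(d_{k\mid j}+d_{j\mid k})_+\rho_{jk}\le C(1+\|z_{jk}\|_2)^3q_{jk}$ uniformly in the angle, with no blow-up as $\alpha_{jk}\to\pi$ and no need for a nearly-antipodal class.

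Your treatment of that class is in any case only a plan (``I expect\ldots{} should reduce\ldots''), and it starts from a false claim. The inequality $a_j+a_k\ge0$ holds for exactly antipodal normals but not for nearly antipodal ones. Take $r=2$, $u_j=(0,1)$, $u_k=(\sin\epsilon,-\cos\epsilon)$, $a_j=0$, $a_k=-1$: the wedge is nonempty and full-dimensional with vertex $(-1/\sin\epsilon,0)$, yet $a_j+a_k=-1$. The positive part makes this harmless, but the route as proposed does not go through.

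The tail class also does not close as sketched. Since $z_{jk}=a_ju_j+d_{k\mid j}n_{k\mid j}$, we have $\|z_{jk}\|_2^2=a_j^2+d_{k\mid j}^2$. So $\|z_{jk}\|_2>R$ can come from a large $|d_{k\mid j}|$ with $a_j$ moderate (nearly parallel normals). You then cannot assert $\varphi_1(a_j)\le(2m_P)^{-4}$ for the facets you charge. Moreover, the per-facet surface sum $\sum_k r^{(j)}_{jk}$ does not account for the prefactor $(d_{k\mid j}+d_{j\mid k})_+$, which can be of size $\|z_{jk}\|_2$.

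The paper's tail step is much cruder and suffices. Take $R=\sqrt{8\log(2m_P)}$ and use:
\begin{itemize}
\item $(d_{k\mid j}+d_{j\mid k})_+\le2\|z_{jk}\|_2$;
\item $\rho_{jk}\le\varphi_2(z_{jk})$;
\item monotonicity of $xe^{-x^2/2}$ on $[1,\infty)$;
\item the count of at most $m_P^2/2$ ridges.
\end{itemize}
The total is $Cm_P^2\sqrt{\log(2m_P)}\,(2m_P)^{-4}\le C$; the constant $8$ in $R$ is what beats the quadratic ridge count. Once you use $\|b_{jk}\|_2=1$, your three classes collapse into this two-case argument, which is exactly the paper's proof.
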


\begin{proof}
Lemmas~\ref{lem:normal-strata} and~\ref{lem:local-cone-mass} give
\begin{align*}
(d_{k\mid j}+d_{j\mid k})_+\rho_{jk}
&\leq
C(1+\|z_{jk}\|_2)^3q_{jk}.
\end{align*}
On $\|z_{jk}\|_2\leq\sqrt{8\log(2m_P)}$, stratum disjointness gives
\begin{align*}
\sum_{\|z_{jk}\|_2\leq R}(d_{k\mid j}+d_{j\mid k})_+\rho_{jk}
&\leq
C\{1+\sqrt{8\log(2m_P)}\}^3\sum_{\{j,k\}}q_{jk}
\leq
C\{\log(2m_P)\}^{3/2}.
\end{align*}
On $\|z_{jk}\|_2>\sqrt{8\log(2m_P)}$,
\begin{align*}
(d_{k\mid j}+d_{j\mid k})_+
\leq
2\|z_{jk}\|_2,
\quad
\rho_{jk}
\leq
\varphi_2(z_{jk}).
\end{align*}
There are at most $m_P(m_P-1)/2$ ridges, and $x\mapsto xe^{-x^2/2}$ is decreasing for $x\geq1$, so
\begin{align*}
\sum_{\|z_{jk}\|_2>R}(d_{k\mid j}+d_{j\mid k})_+\rho_{jk}
&\leq
Cm_P^2\sqrt{8\log(2m_P)}(2m_P)^{-4}
\leq
C.
\end{align*}
This proves the first assertion. The ridge sum in Eq.~\eqref{eq:global-ridge-identity} is signed and hence is bounded above by its positive part. Lemma~\ref{lem:surface-scalar-budgets} therefore gives
\begin{align*}
\sum_js_j\Delta_j
&\leq
\sum_js_j\|\mu_j\|_2^2
+
\sum_{\{j,k\}}(d_{k\mid j}+d_{j\mid k})_+\rho_{jk}
\leq
C\{\log(2m_P)\}^{3/2}.
\end{align*}
\end{proof}

We now complete the proof of Proposition~\ref{prop:matrix-surface}.

\begin{proof}
By homogeneity, assume $\max_j\|B_j\|_{\mathrm{op}}\leq1$. If $r=1$, an irredundant one-dimensional polyhedron has at most two boundary points and
\begin{align*}
\sum_j|a_j^2-1|\varphi_1(a_j)
&\leq
C.
\end{align*}
Suppose $r\geq2$. Lemma~\ref{lem:facet-decomposition} gives
\begin{align*}
\left|
\sum_{j=1}^{m_P}\int_{F_j}\operatorname{tr}\{(xx^\top-I_r)B_j\}\varphi_r(x)d\mathcal H^{r-1}(x)
\right|
&\leq
\sum_js_j\{1+a_j^2+2|a_j|\|\mu_j\|_2+\|\mu_j\|_2^2+\Delta_j\}.
\end{align*}
Applying Lemma~\ref{lem:surface-scalar-budgets} to the first four terms and Lemma~\ref{lem:ridge-budget} to the final term gives
\begin{align*}
\left|
\sum_{j=1}^{m_P}\int_{F_j}\operatorname{tr}\{(xx^\top-I_r)B_j\}\varphi_r(x)d\mathcal H^{r-1}(x)
\right|
&\leq
C\{\log(2m_P)\}^{3/2}.
\end{align*}
Lemma~\ref{lem:unbounded-Gauss-Green} justifies every divergence identity for unbounded $P$. Restoring the factor $\max_j\|B_j\|_{\mathrm{op}}$ proves Eq.~\eqref{eq:matrix-surface-bound}.
\end{proof}

\begin{lemma}
\label{lem:skewness-operator}
For each defining normal $u_j$, let $(B_j)_{a_1a_2}
= \sum_{a_3=1}^ru_{ja_3}\sum_{i=1}^n\mathbb E[Y_{ia_1}Y_{ia_2}Y_{ia_3}]$, for $1\leq a_1,a_2\leq r$. Then
\begin{align*}
\max_{j\leq2d}\|B_j\|_{\mathrm{op}}
&\leq
{\color{black}b_{d,n}}.
\end{align*}
\end{lemma}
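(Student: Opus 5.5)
The plan is to identify $B_j$ as a covariance-type matrix weighted by the bounded scalar $\langle u_j,Y_i\rangle$. The bound then follows from the normalization $\sum_{i=1}^n\mathbb E[Y_iY_i^\top]=I_r$ established in Section~\ref{subsec:support-restriction}.

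First, I would rewrite the definition of $B_j$ by linearity of expectation. Summing over $a_3$ inside the expectation gives
\begin{align*}
B_j
&=
\sum_{i=1}^n\mathbb E\left[\langle u_j,Y_i\rangle\,Y_iY_i^\top\right].
\end{align*}
All expectations are finite because each $\widetilde X_i$ is almost surely bounded after truncation, and $Y_i=U^+(\widetilde X_i/\sqrt n)$ is a fixed linear image of it. In particular $B_j$ is symmetric, so $\|B_j\|_{\mathrm{op}}=\sup_{\|v\|_2=1}|v^\top B_jv|$.

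Second, I would fix a unit vector $v\in\mathbb R^r$ and write
\begin{align*}
v^\top B_jv
&=
\sum_{i=1}^n\mathbb E\left[\langle u_j,Y_i\rangle\,(v^\top Y_i)^2\right].
\end{align*}
For $j\le d$, Section~\ref{subsec:support-restriction} gives $|\langle u_j,Y_i\rangle|=|\widetilde X_{ij}|/\sqrt n\le b_{d,n}$ almost surely. For $d<j\le 2d$, the same bound holds because $u_j=-u_{j-d}$. Hence
\begin{align*}
|v^\top B_jv|
&\le
b_{d,n}\sum_{i=1}^n\mathbb E(v^\top Y_i)^2
=
b_{d,n}\,v^\top\Big(\sum_{i=1}^n\mathbb E[Y_iY_i^\top]\Big)v
=
b_{d,n}.
\end{align*}
Taking the supremum over $v$ and then the maximum over $j\le 2d$ gives the claim.

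There is no real obstacle in this argument. The only point needing care is to use only the projection bound $|\langle u_j,Y_i\rangle|\le b_{d,n}$ and the identity covariance, never $\|Y_i\|_2$. This is exactly why the bound is independent of the smallest positive eigenvalue of $\widetilde\Sigma_n$. The nonnegativity of $(v^\top Y_i)^2$ lets the scalar weight be pulled out pointwise before taking expectations.
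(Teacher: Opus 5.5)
Your proof is correct and matches the paper's argument essentially verbatim. Both write $v^\top B_jv=\sum_{i}\mathbb E[\langle u_j,Y_i\rangle\langle v,Y_i\rangle^2]$, pull out the almost-sure bound $|\langle u_j,Y_i\rangle|\le b_{d,n}$, and use $\sum_i\mathbb E[Y_iY_i^\top]=I_r$; your remarks on symmetry of $B_j$ and on the reflected normals $u_{d+j}=-u_j$ only make explicit what the paper leaves implicit.
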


\begin{proof}
For every unit vector $v\in\mathbb R^r$,
\begin{align*}
v^\top B_jv
&=
\sum_{i=1}^n\mathbb E[\langle u_j,Y_i\rangle\langle v,Y_i\rangle^2].
\end{align*}
Using $|\langle u_j,Y_i\rangle|\leq {\color{black}b_{d,n}}$ and $\sum_i\mathbb E[Y_iY_i^\top]=I_r$ gives
\begin{align*}
|v^\top B_jv|
&\leq
{\color{black}b_{d,n}}\sum_{i=1}^n\mathbb E\langle v,Y_i\rangle^2
=
{\color{black}b_{d,n}}.
\end{align*}
Taking the supremum over unit $v$ proves the claim.
\end{proof}

\begin{lemma}
\label{lem:third-order-skewness}
Under the normalized probabilistic and geometric setup of
Proposition~\ref{prop:T-to-G}, there exists a universal constant
\(C>0\) such that, for every $s\in[0,1]$, $\beta>0$, $a_0\in\mathbb R$ and $a\in\mathbb R^{2d}$, \begin{align*}
\left|
\sum_{a_1,a_2,a_3=1}^r
\left\{
\sum_{i=1}^n
\mathbb E[
Y_{ia_1}Y_{ia_2}Y_{ia_3}
]
\right\}
\mathbb E[
\partial_{a_1a_2a_3}
\pi_{\beta,a_0,a}(T_s)
]
\right|
&\leq
C b_{d,n}\{\log(4d)\}^{3/2}.
\end{align*}
\end{lemma}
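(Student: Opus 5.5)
We will reduce the third-order term to the matrix-weighted Gaussian surface integral of Proposition~\ref{prop:matrix-surface}, as sketched in Eq.~\eqref{eq:main-H3-surface}. Write $\kappa_{a_1a_2a_3}=\sum_i\mathbb E[Y_{ia_1}Y_{ia_2}Y_{ia_3}]$.

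\textbf{Step 1: Gaussian part.} Write $T_s=\overline H_s+\sigma_sG$ with $\sigma_s^2=(1+s)/2\geq1/2$, where $\overline H_s=2\sqrt{1-s}\sum_\ell D_\ell Y_\ell$ and $G\sim N(0,I_r)$ are independent. Conditioning on $\overline H_s=h$, it suffices to bound
\[
\sum\kappa_{a_1a_2a_3}\,\mathbb E_G\,\partial_{a_1a_2a_3}\pi(h+\sigma_sG)
\]
uniformly in $h$.

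\textbf{Step 2: Gumbel representation.} By Lemma~\ref{lem:Gumbel-cells}, $\pi_{\beta,a_0,a}(y)=\mathbb P_\gamma\{\text{label }0\text{ wins}\}$. For fixed Gumbel offsets, the winning event for label $0$ is the polyhedron
\[
P_\gamma=\bigcap_{j=1}^{2d}\{y:\langle u_j,y\rangle\leq a_j+a_0+(\gamma_0-\gamma_j)/\beta\},
\]
so $\pi=\mathbb E_\gamma\mathbf 1_{P_\gamma}$. Rescale $x=(h+\sigma_s y)$ inversely, so that $\mathbb E_G\mathbf 1_{P_\gamma}(h+\sigma_sG)=\gamma_r(P')$, where $P'$ is a polyhedron with the same normals $u_j$ and shifted thresholds.

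\textbf{Step 3: integration by parts.} Gaussian integration by parts three times moves all derivatives onto the Gaussian density. This gives
\[
\mathbb E_G\partial_{a_1a_2a_3}\pi(h+\sigma_sG)=-\sigma_s^{-3}\,\mathbb E_\gamma\int_{P'}H_{3,a_1a_2a_3}(x)\varphi_r(x)\,dx,
\]
using $\partial_a\partial_b\partial_c\varphi_r=-H_{3,abc}\varphi_r$ and justifying the exchange with the bounded selector derivatives.

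Next, note that $H_{3,a_1a_2a_3}\varphi_r=-\partial_{a_3}\{(x_{a_1}x_{a_2}-\delta_{a_1a_2})\varphi_r\}$. Apply Lemma~\ref{lem:unbounded-Gauss-Green} to the vector field $V_{a_3}(x)=\sum_{a_1,a_2}\kappa_{a_1a_2a_3}(x_{a_1}x_{a_2}-\delta_{a_1a_2})\varphi_r(x)$, whose growth is polynomial times $\varphi_r$. Contracting with $\kappa$ turns the volume integral into
\[
\sum_j\int_{F_j}\operatorname{tr}\{(xx^\top-I_r)B_j\}\varphi_r\,d\mathcal H^{r-1},
\]
where the facet weight $(B_j)_{a_1a_2}=\sum_{a_3}u_{ja_3}\kappa_{a_1a_2a_3}$ is exactly the matrix of Lemma~\ref{lem:skewness-operator}.

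\textbf{Step 4: degenerate configurations.} Before applying Proposition~\ref{prop:matrix-surface}, the representation of $P'$ must be made irredundant, nonempty, proper, and full-dimensional. Redundant halfspaces contribute no facets, so dropping them leaves the boundary sum unchanged and only decreases $m_P\leq2d$. Duplicated normals $u_j=u_k$ with distinct thresholds keep only the binding one, and that facet carries its own $B_j$. If $P'$ is empty or lower-dimensional (for example when $u_{d+j}=-u_j$ forces a slab of zero width), the Lebesgue integral vanishes, so the contribution is zero. If $P'=\mathbb R^r$, the integral of $H_3\varphi_r$ is zero.

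\textbf{Step 5: conclusion.} Proposition~\ref{prop:matrix-surface} with Lemma~\ref{lem:skewness-operator} bounds the surface sum by $Cb_{d,n}\{\log(4d)\}^{3/2}$ for almost every offset realization. Averaging over $\gamma$ and $h$, and using $\sigma_s^{-3}\leq2^{3/2}$, completes the proof.

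\textbf{Main obstacle.} The hardest step is the measure-theoretic bookkeeping in Step 3: justifying that differentiating $\mathbb E_\gamma$ commutes with the Gaussian integration by parts, and that the boundary ridges carry no extra terms. Lemma~\ref{lem:unbounded-Gauss-Green} handles the latter once the representation is irredundant.
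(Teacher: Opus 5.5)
Your proposal is correct and follows essentially the same route as the paper: condition on $2\sqrt{1-s}\sum_i D_iY_i$, integrate by parts three times in the Gaussian component of variance $(1+s)/2\geq1/2$, represent $\pi_{\beta,a_0,a}$ through the Gumbel dummy-win polyhedron (whose normals are the original $u_j$), turn the Hermite volume integral into the facet sum via $\partial_{a_3}\{H_{2,a_1a_2}\varphi_r\}=-H_{3,a_1a_2a_3}\varphi_r$ and Lemma~\ref{lem:unbounded-Gauss-Green}, discard empty, null and redundant configurations, and finish with Proposition~\ref{prop:matrix-surface} and Lemma~\ref{lem:skewness-operator}. The only slip is the sign in Step 3: three integrations by parts give $\mathbb E_G\,\partial_{a_1a_2a_3}\pi_{\beta,a_0,a}(h+\sigma_sG)=+\sigma_s^{-3}\mathbb E_G[H_{3,a_1a_2a_3}(G)\pi_{\beta,a_0,a}(h+\sigma_sG)]$, which is harmless because only the absolute value is bounded.
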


\begin{proof}
Write
\begin{align*}
H_s
=
2\sqrt{1-s}\sum_{i=1}^nD_iY_i,
\quad
\sigma_s^2
=
\frac{1+s}{2}.
\end{align*}
Then $T_s\stackrel d=H_s+\sigma_sG$, where $G\sim N(0,I_r)$ is independent of $H_s$, and $\sigma_s^2\geq1/2$. Conditional on $H_s$, third-order Gaussian integration by parts gives
\begin{align*}
\mathbb E_G[\partial_{a_1a_2a_3}\pi_{\beta,a_0,a}(H_s+\sigma_sG)]
&=
\sigma_s^{-3}\mathbb E_G[H_{3,a_1a_2a_3}(G)\pi_{\beta,a_0,a}(H_s+\sigma_sG)].
\end{align*}
By Lemma~\ref{lem:Gumbel-cells}, conditional on $H_s$ and the Gumbel offsets, the dummy-win event is a closed polyhedron in the Gaussian variable with at most $2d$ defining normals. Because every coordinate of $H_3(G)$ is integrable, Fubini's theorem first permits the Gumbel representation to be inserted inside the Hermite expectation. For a fixed realization of $(H_s,\gamma)$, an empty or Gaussian-null winner cell contributes zero. If the cell is all of $\mathbb R^r$, its contribution is also zero because every third Hermite polynomial has mean zero. In every remaining case the cell has positive Gaussian measure and hence is full-dimensional. Removing redundant inequalities gives an irredundant representation with $m_P\leq2d$ facets. Each retained facet comes from one of the original winner inequalities and therefore has outward unit normal equal to one of the original $u_j$; if parallel duplicate inequalities occur, only the tightest one is retained and it carries the same matrix weight $B_j$. Proposition~\ref{prop:matrix-surface} can consequently be applied realization by realization. Its bound is uniform in the random thresholds, and hence averaging over $H_s$ and the Gumbel offsets is justified. Thus it is enough to bound
\begin{align*}
\sum_{a_1,a_2,a_3=1}^r
\left\{\sum_{i=1}^n\mathbb E[Y_{ia_1}Y_{ia_2}Y_{ia_3}]\right\}
\mathbb E[H_{3,a_1a_2a_3}(G)\mathbf1\{G\in P\}].
\end{align*}
Componentwise,
\begin{align*}
\partial_{a_3}\{H_{2,a_1a_2}(x)\varphi_r(x)\}
&=
-H_{3,a_1a_2a_3}(x)\varphi_r(x).
\end{align*}
Applying Lemma~\ref{lem:unbounded-Gauss-Green}, multiplying by the third moment coefficients, and summing over $a_1,a_2,a_3$ gives
\begin{align*}
{}&\sum_{a_1,a_2,a_3=1}^r
\left\{\sum_{i=1}^n\mathbb E[Y_{ia_1}Y_{ia_2}Y_{ia_3}]\right\}
\mathbb E[H_{3,a_1a_2a_3}(G)\mathbf1\{G\in P\}]
\nonumber\\
&=
-\sum_{j=1}^{m_P}\int_{F_j}\operatorname{tr}\{H_2(x)B_j\}\varphi_r(x)d\mathcal H^{r-1}(x).
\end{align*}
Lemma~\ref{lem:skewness-operator} and Proposition~\ref{prop:matrix-surface} bound the absolute value by $C{\color{black}b_{d,n}}\{\log(4d)\}^{3/2}$. Finally, $\sigma_s^{-3}\leq2^{3/2}$, and averaging over $H_s$ and the Gumbel offsets proves the result.
\end{proof}

\section{Supporting results for Section~\ref{sec:summary}}{\color{black}\label{appe:sec:final}}

\begin{proof}[Proof of
Theorem~\ref{thm:subexponential-gaussian-approximation}]
By the reductions leading to
Eq.~\eqref{eq:preprocessing-regime}, the cases $\mathfrak r_{n,d}>c_b$ or $n<N_b$ have already been handled by the trivial probability bound after
increasing \(C_b\). It therefore suffices to work throughout under
\[
\mathfrak r_{n,d}\leq c_b,
\qquad
n\geq N_b,
\]
that is, under Eq.~\eqref{eq:preprocessing-regime}.

In this regime, the construction in
Section~\ref{sec:preparation} gives the normalized envelope
\(b_{d,n}\), and
Lemma~\ref{lem:preprocessing-transfer-main} gives truncation and
Gaussian-transfer error
\[
n^{-20}+C_b n^{-4}.
\]
After increasing \(C_b\) if necessary, we assume \(C_b\geq1\) and
continue to use \(C_b\) for constants depending only on \(b\).

Assumptions~\ref{assu:sub.exp} and~{\color{black}\ref{assu:marginal-variance-lower-bound}} imply
\(B_n\geq b/2\). Indeed, since \(x^2\leq2e^x\) for \(x\geq0\),
\begin{align*}
\mathbb E[X_{ij}^2]
&\leq
2B_n^2\mathbb E\left[\exp\{|X_{ij}|/B_n\}\right]
\leq
4B_n^2.
\end{align*}
If \(b_{d,n}>1/4\), then
\begin{align*}
\frac{B_n}{\sqrt n}\{\log(2dn)\}^{5/2}
&=
\frac{b}{64\sqrt2}
b_{d,n}\{\log(2dn)\}^{3/2}
\end{align*}
is bounded below by a positive constant depending only on \(b\).
Thus, the conclusion again follows from the trivial bound after
increasing \(C_{b}\). We may therefore assume that
\(b_{d,n}\leq1/4\).

Choose \(\delta=b_{d,n}\) in
Proposition~\ref{prop:probability-comparison-through-selectors}.
Propositions~\ref{prop:S-to-T} and~\ref{prop:T-to-G},
Lemma~\ref{lem:conditional-nazarov}, and
Lemma~\ref{lem:preprocessing-transfer-main} give
\begin{align}
\sup_{A\in\mathcal R^d}
\left|
\mathbb P(W\in A)-\mathbb P(Z\in A)
\right|
\leq C_b\Bigg\{
&b_{d,n}
+
\frac{
\{1+\log_+(16d/b_{d,n})\}\sqrt{\log(4d)}
}{\beta}
+
nb_{d,n}^4\beta^4\tau
\nonumber\\
&+
b_{d,n}^2\beta^2\log(4d)\log(1/\tau)
+
b_{d,n}\{\log(4d)\}^{3/2}
\nonumber\\
&+
nQ^2({\color{black}C^*}Q\beta b_{d,n})^Q
+
n^{-20}
+
n^{-4}
\Bigg\}.
\label{eq:overall-bound-original-parameters}
\end{align}

Put
\begin{align*}
L_n
&=
\log(4dn),
\quad
H_n
=
1+\log_+\left(\frac{16dn}{b_{d,n}}\right).
\end{align*}
Then
\begin{align*}
\log(4d)
&\leq
L_n,
\quad
1+\log_+\left(\frac{16d}{b_{d,n}}\right)
\leq
H_n.
\end{align*}
It is enough to consider the case
\begin{align}
b_{d,n}^{2/3}H_nL_n^{2/3}
+
b_{d,n}L_n^{3/2}
&\leq
c_0.
\label{eq:nontrivial-parameter-regime}
\end{align}
Otherwise, the desired conclusion follows from the trivial bound after
increasing the constant.

Moreover,
\begin{align*}
nb_{d,n}^2
=
\left\{
\frac{64\sqrt2 B_n\log(2dn)}{b}
\right\}^2\geq
\left\{
32\sqrt2\log(2dn)
\right\}^2
\geq
1.
\end{align*}
Choose
\begin{align}
\beta
&=
b_{d,n}^{-2/3}L_n^{-1/6},
\quad
\tau
=
\frac{H_nL_n^{4/3}}
{nb_{d,n}^{2/3}},
\quad
Q
=
2\left\lceil A\log(2n)\right\rceil,
\label{eq:corrected-parameter-choice}
\end{align}
where \(A>0\) is chosen below. 
Since \(nb_{d,n}^2\geq1\),
\begin{align*}
\tau
\leq
b_{d,n}^{4/3}H_nL_n^{4/3}\leq
\left(
b_{d,n}^{2/3}H_nL_n^{2/3}
\right)^2.
\end{align*}
Hence, Eq.~\eqref{eq:nontrivial-parameter-regime} implies
\(\tau\leq c_0^2<1/2\). In addition, since
\(H_n\geq1\), \(L_n\geq1\), and \(b_{d,n}\leq1/4\),
$
\tau
\geq
\frac1n$,
and therefore
\begin{align*}
\log(1/\tau)
&\leq
\log n
\leq
CH_n.
\end{align*}

Since \(b_{d,n}\leq1/4\), we have \(H_n\geq L_n\). Therefore,
Eq.~\eqref{eq:nontrivial-parameter-regime} gives
\begin{align*}
\beta b_{d,n}
&=
b_{d,n}^{1/3}L_n^{-1/6}
\leq
c_0^{1/2}L_n^{-1}.
\end{align*}
Also, \(Q\leq CA L_n\), since
\(\log(2n)\leq L_n\). First choose \(A\) sufficiently large that
\begin{align*}
nQ^2 16^{-Q}
&\leq
n^{-4}
\end{align*}
for all sufficiently large \(n\), and then choose \(c_0\) sufficiently
small that
\begin{align*}
{\color{black}C^*}Q\beta b_{d,n}
&\leq
\frac{1}{16}.
\end{align*}
It follows that
\begin{align*}
nQ^2({\color{black}C^*}Q\beta b_{d,n})^Q
&\leq
nQ^2 16^{-Q}
\leq
n^{-4}.
\end{align*}

With the choices in Eq.~\eqref{eq:corrected-parameter-choice},
\begin{align*}
\frac{
\{1+\log_+(16d/b_{d,n})\}\sqrt{\log(4d)}
}{\beta}
&\leq
b_{d,n}^{2/3}H_nL_n^{2/3},\\
nb_{d,n}^4\beta^4\tau
&=
b_{d,n}^{2/3}H_nL_n^{2/3},\\
b_{d,n}^2\beta^2\log(4d)\log(1/\tau)
&\leq
Cb_{d,n}^{2/3}H_nL_n^{2/3},\\
b_{d,n}\{\log(4d)\}^{3/2}
&\leq
b_{d,n}L_n^{3/2}.
\end{align*}
The standalone \(b_{d,n}\) term is absorbed by
\(b_{d,n}L_n^{3/2}\). Since \(B_n\geq b/2\), the terms \(n^{-20}\)
and \(n^{-4}\) are also absorbed by
\(b_{d,n}L_n^{3/2}\) for all sufficiently large \(n\). Consequently,
\begin{align}
\sup_{A\in\mathcal R^d}
\left|
\mathbb P(W\in A)-\mathbb P(Z\in A)
\right|
&\leq
C_b
\min\left\{
1,
b_{d,n}^{2/3}H_nL_n^{2/3}
+
b_{d,n}L_n^{3/2}
\right\}.
\label{eq:bounded-final-rate}
\end{align}

Finally, no polynomial-dimension assumption is needed to control
\(H_n\) and \(L_n\). Indeed,
\begin{align*}
L_n
&=
\log(4dn)
\leq
C\log(2dn),
\end{align*}
while \(B_n\geq b/2\) and the definition of \(b_{d,n}\) give
\begin{align*}
\frac{16dn}{b_{d,n}}
&=
\frac{bd n^{3/2}}
{4\sqrt2B_n\log(2dn)}
\leq
\frac{d n^{3/2}}
{2\sqrt2\log(2dn)}.
\end{align*}
Therefore,
\begin{align*}
H_n+L_n
&\leq
C\log(2dn).
\end{align*}
It follows that
\begin{align*}
b_{d,n}^{2/3}H_nL_n^{2/3}
\leq
C_{b}
\left(\frac{B_n^2}{n}\right)^{1/3}
\{\log(2dn)\}^{7/3},\quad
b_{d,n}L_n^{3/2}
\leq
C_{b}
\frac{B_n}{\sqrt n}
\{\log(2dn)\}^{5/2}.
\end{align*}
Substituting these bounds into
Eq.~\eqref{eq:bounded-final-rate} proves the final bound. Finitely many
remaining values of \(n\) are absorbed into \(C_{b}\).

It remains to pass from the rectangle bound to the critical-value
statement. For every \(a\in\mathbb R^d\) and \(t\in\mathbb R\), define
\[
W_n
:=
\frac{1}{\sqrt n}\sum_{i=1}^nX_i,
\qquad
R_{a,t}
:=
\prod_{j=1}^d(-\infty,t-a_j].
\]
Then
\[
\{T_n(a)\leq t\}
=
W_n^{-1}(R_{a,t}),
\qquad
\{T_n^G(a)\leq t\}
=
Z^{-1}(R_{a,t}).
\]

By Assumption~\ref{assu:marginal-variance-lower-bound},
each Gaussian marginal \(Z_j\) has positive variance. Hence
\(T_n^G(a)=\max_j(Z_j+a_j)\) has no atoms, because, for every
\(t\in\mathbb R\),
\[
\mathbb P\{T_n^G(a)=t\}
\leq
\sum_{j=1}^d
\mathbb P\{Z_j=t-a_j\}
=0.
\]
Thus the distribution function of \(T_n^G(a)\) is continuous.
Applying Lemma~\ref{lem:critical-value-transfer} with
\[
V=T_n(a),
\qquad
V^G=T_n^G(a),
\qquad
q_{1-\alpha}^G=c_{n,1-\alpha}^G(a)
\]
completes the proof.
\end{proof}

\end{document}